\definecolor{refkeybis}{gray}{.65}
\definecolor{labelkeybis}{gray}{.65}
{\makeatletter
\def\SK@refcolor{\color{refkeybis}}%
\def\SK@labelcolor{\color{labelkeybis}}}
\numberwithin{equation}{section} 
\newtheorem{theorem}{Theorem}[section] 
\newtheorem{lemma}[theorem]{Lemma}
\newtheorem{corollary}[theorem]{Corollary}
\newtheorem{proposition}[theorem]{Proposition}
\theoremstyle{definition}
\newtheorem{remark}[theorem]{Remark}
\newtheorem{example}[theorem]{Example}
\newtheorem{definition}[theorem]{Definition}
\newcommand{\beq}{\begin{equation}}
\newcommand{\eeq}{\end{equation}}
\newcommand{\R}{\mathbb{R}}
\newcommand{\N}{\mathbb{N}}
\newcommand{\DD}{\mathscr{D}}
\newcommand{\ii}{{\mbox{\boldmath$i$}}}
\newcommand{\rr}{{\mbox{\boldmath$r$}}}
\renewcommand{\ss}{{\mbox{\boldmath$s$}}}
\newcommand{\vv}{{\mbox{\boldmath$v$}}}
\newcommand{\tauV}{{\kern-3pt\tau}}
\newcommand{\srr}{{\mbox{\scriptsize\boldmath$r$}}}
\newcommand{\rrho}{{\mbox{\boldmath$\rho$}}}
\newcommand{\xxi}{{\mbox{\boldmath$ \xi$}}}
\renewcommand{\restriction}[1]{\lower3pt\hbox{$|_{#1}$}}
\newcommand{\de}{\partial} 
\newcommand{\supp}{\mathop{\rm supp}\nolimits} 
\newcommand{\Leb}[1]{{\mathscr L}^{#1}} 
\newcommand{\weakto}{\rightharpoonup}
\newcommand{\eps}{{\varepsilon}}
\renewcommand{\to}{\rightarrow}
\newcommand{\PlusMeasuresTwo}[1]{\mathscr M_{2}(#1)} 
\newcommand{\PlusMeasures}[1]{\mathscr M_+(#1)} 
\newcommand{\PlusMeasuresComp}[1]{\mathscr M_+^{\rm comp}(#1)} 
\newcommand{\CPlusMeasures}[1]{\mathscr M_+^{\rm c}(#1)} 
\newcommand{\CPlusMeasuresTwo}[1]{\mathscr M_2^{\rm c}(#1)} 
\newcommand{\rme}{\mathrm e}
\newcommand{\rmc}{\mathrm c}
\newcommand{\sfE}{\mathsf E}
\newcommand{\sfD}{\mathsf D}
\newcommand{\stD}{\tilde\sfD}
\newcommand{\sfR}{\mathsf R}
\newcommand{\sfG}{\mathsf G}
\newcommand{\cI}{\mathcal I}
\newcommand{\cE}{\mathcal E}
\newcommand{\cF}{\mathcal F}
\newcommand{\cV}{\mathcal V}
\newcommand{\cG}{\mathcal G}
\newcommand{\Mom}[2]{{\mathfrak m}_{#1}(#2)}
\renewcommand{\d}{{\mathrm d}}
\newcommand{\dx}{\d x}
\newcommand{\Rd}{{\R^d}}
\newcommand{\loc}{{\mathrm{loc}}}
\newcommand{\sfS}{\mathsf{S}}
\newcommand{\sfX}{\mathsf{X}}
\newcommand{\sfc}{\mathsf{c}}
\newcommand{\restr}[1]{\lower3pt\hbox{$|_{#1}$}}
\newcommand{\mass}{{\mathfrak m}}
\newcommand{\betainfty}{{\beta^{\infty}}}
\newcommand{\Einfty}{{E^{\infty}}}
\newcommand{\nchi}{{\raise.3ex\hbox{$\chi$}}}
\newcommand{\Dom}[1]{\mathsf{D}(#1)}
\newcommand{\Domp}[1]{\mathsf{D}_+(#1)}
\newcommand{\Pos}[1]{\Omega_+(#1)}
\newcommand{\Conn}[1]{\mathscr I(#1)}
\newcommand{\CDom}[1]{\mathsf{J}(#1)}
\newcommand{\dV}{\frak d}
\newcommand{\urho}{{\rho}}
\begin{document}

\title{\bf Measure valued solutions of sub-linear diffusion equations
  with a drift term
}

\author{S. Fornaro\thanks{Dipartimento di Matematica ``F. Casorati'', Universit\`a
    degli Studi di Pavia, Italy. E-mail: \textsf{simona.fornaro@unipv.it}, \textsf{stefano.lisini@unipv.it},  \textsf{giuseppe.savare@unipv.it},  \textsf{giuseppe.toscani@unipv.it}.}
    \qquad S. Lisini\footnotemark[1]
    \qquad G. Savar\'e\footnotemark[1]
\qquad G. Toscani\footnotemark[1]}

\date{\today}

\maketitle

\begin{abstract}
In this paper we study nonnegative, measure valued solutions of the
initial value problem for one-dimensional drift-diffusion equations
when the nonlinear diffusion
is governed by an increasing $C^1$ function $\beta$ with
$\lim_{r\to +\infty} \beta(r)<+\infty$.
By using tools of optimal transport, we will show that 
this kind of problems is well posed in the class of
nonnegative Borel measures with finite mass $\frak m$ and finite
quadratic momentum and it is the gradient flow of
a suitable entropy functional with respect to the so called $L^2$-Wasserstein distance.

Due to the degeneracy of diffusion for large densities,
  concentration of masses can occur, whose support
  is transported by the drift.
We shall show that the large-time behavior of solutions depends
on a critical mass $\mass_{\rm c}$, which can be explicitely characterized in
terms of $\beta$ and of the drift term.
If the initial mass is less then $\mass_{\rm c}$,
the entropy has a unique minimizer which is absolutely continuous with
respect to the Lebesgue measure.

Conversely, when the total mass $\mass$ of the solutions is greater than the critical one,
the steady state has a singular part in which
the exceeding mass $\mass- \mass_{\rm c}$ is accumulated.
\end{abstract}

\noindent {\em Keywords}: sublinear diffusion, concentration
phenomena, propagation of singularities, gradient flows,
nonlinear diffusion equations, Wasserstein
distance, measure valued solutions.

\section{Introduction}
In this paper we study nonnegative, measure-valued solutions of
the Cauchy problem for a one-dimensional drift-diffusion
equation
\begin{equation}\label{eq:rho}
    \de_t\urho-\de_x\big(\de_x(\beta(\urho))+V'\urho\big)=0 \quad {\rm in }\ (0,+\infty)\times  \R,
    \qquad
    \urho(0,\cdot)=\urho_0 \quad \text{in } \R.
    \tag{1.DDE}
\end{equation}
Here we assume that
\begin{equation}\label{hp:beta}
  \beta\in C^1([0,+\infty)) \text{ is increasing,}\quad
  \beta(0)=0,\quad
  \betainfty:=\lim_{r\to +\infty} \beta(r)<+\infty,
  \tag{1.$\beta$}
\end{equation}
and $V:\R\to \R$ is a $C^2$ driving potential,
satisfying the conditions
\begin{equation}   \label{crescita-quadratica}
 V''(x)\ge \lambda\quad  \quad {\rm for\  all}\ x\in \R;\qquad
 {\liminf_{|x|\to+\infty}\frac{V(x)}{|x|^2}\ge0.}
 \tag{1.$V$}
\end{equation}
We will look for solutions $t\mapsto \rho_t$
in the space $\PlusMeasuresTwo{\R,\frak m}$ of nonnegative Borel measures with finite mass $\frak m=\urho(\R)$ and finite
quadratic momentum
\begin{equation}
  \label{eq:7}
 \Mom2\urho:=\int_\R |x|^2\,\d\urho(x)<+\infty.
\end{equation}


Conditions \eqref{hp:beta} describe the physical situation in which
the diffusion operator is very weak and {possibly} unable to smooth
out the solution if initially point masses are present.

  This fact is reflected by the natural entropy functional $\cF$ which
  generates equations like \eqref{eq:rho} as gradient flow in
  $\PlusMeasuresTwo{\R,\frak m}$ and in particular
  decays along the solutions of \eqref{eq:rho},
  \begin{equation}
    \label{eq:29}
    \cF(\rho):=\cE(\rho)+\cV(\rho),\quad
    \cE(\rho):=\int_\R E(u(x))\,\d x\quad\text{if }\rho=u\Leb
    1+\rho^\perp,\quad
    \cV(\rho)=\int_\R V(x)\,\d\rho(x),
  \end{equation}

where
{the convex energy density function $E:[0,+\infty)\to\R$ is defined as}
\begin{equation}
  \label{eq:26}
  E(r):=-\beta(r)-r\int_r^{+\infty} \frac{\beta'(s)}s\,\d s\quad
  \text{so that}\quad \beta'(r) = rE''(r),\quad
  E(0)=0,
  \tag{1.$E$}
\end{equation}
and satisfies
\begin{equation}
 \lim_{r\to+\infty}E(r)=-\beta_\infty\quad\text{and therefore}\quad
 \lim_{r \to+\infty} \frac{E(r)}r=0,\label{eq:32}
\end{equation}
 so that the (lower semicontinuous) integral functional $\cE$ defined
 by \eqref{eq:29}
 {depends only on}
 the regular part of a Borel measure (see for instance
\cite{DT84}).

  Even worst, the energy density $E$ does not satisfy the
  regularizing condition \cite[Thm. 10.4.8]{ags}
  $\lim_{r\to+\infty}E(r)=-\infty$, which prevents a singular part
  for measures with finite energy dissipation along \eqref{eq:rho},
  thus in particular for any solution $\rho_t$ at positive time $t>0$.

\paragraph{Sub-linear diffusions and Bose-Einstein distribution.}
In order to better clarify the physical meaning of condition
\eqref{hp:beta}, let us briefly describe a situation in $\Rd$
in which the steady state of the
drift-diffusion equation is explicitly computable.
To this aim, for  $x \in \R^d$, $d \ge 1$, let us fix $V(x) = |x|^2/2$, while, for a fixed
constant $\alpha >0$, the diffusion function $\beta(r)$ is defined by
 \beq\label{alpha}
 \beta(0)=0,\qquad \beta'(r) = \frac 1{1+ r^\alpha}.
 \eeq
Then, since in this case the drift-diffusion equation
 \[
\de_t\urho-\nabla_x \cdot\left( \nabla_x\beta (\urho) + x \urho\right)=0, \qquad x \in \R^d
 \]
can be rewritten as
 \beq\label{gen}
\de_t\urho-\nabla_x \cdot \left( \rho \nabla_x\left(\frac 1\alpha \ln \frac{\rho^\alpha}{1+ \rho^\alpha} + \frac{|x|^2}2\right)\right)=0,
 \eeq
the steady states of \eqref{gen} are given by

 \beq\label{steady-gen}
 \rho_\infty (x) =  \left[ e^{\alpha |x|^2/2 +\eta} - 1 \right]^{-1/\alpha}, \qquad \eta \ge 0.
 \eeq
The (nonnegative) constant $\eta$ in \eqref{steady-gen} identifies the mass of the steady solution
 \[
\mass_\eta = \int_{\R^d} \left[ e^{\alpha |x|^2/2 +\eta} - 1
\right]^{-1/\alpha} \, \d x.
 \]
Since the mass $\mass_\eta$ is decreasing as soon as $\eta$ increases, the maximum value of $\mass_\eta$ is attained at $\eta =0$.
Note that, if $B_d$ denotes the measure of the unit sphere in $\R^d$,  the value
  \[
\mass_0 = \int_{\R^d} \left[ e^{\alpha |x|^2/2} - 1 \right]^{-1/\alpha} \, \d x = B_d \int_0^{+\infty} r^{d-1} \left[ e^{\alpha r^2/2} - 1
\right]^{-1/\alpha} \, \d r
 \]
is bounded as soon as $\alpha > 2/d$. Whenever the constant $\alpha$ is chosen in this range, the value
 \beq\label{crit}
\mass_{\rm c} = \mass_0 = B_d \int_0^{+\infty} r^{d-1} \left[ e^{\alpha r^2/2} - 1 \right]^{-1/\alpha} \, \d r < +\infty
 \eeq
defines the so-called \emph{critical mass} of the problem, namely the maximal mass that can be achieved by a regular steady state. It is
interesting to remark that, in view of the lower bound on $\alpha$ which implies the existence of a critical mass, since in dimension one $\alpha
>2$, the function $\beta$ in \eqref{alpha} satisfies conditions \eqref{hp:beta}, in particular
 \[
\lim_{r\to +\infty} \beta(r)<+\infty.
 \]
This condition clearly can fail in higher dimensions.

 The most relevant physical example of such type of steady states is furnished by the three-dimensional Bose-Einstein distribution
\cite{CC70}
 \beq\label{BEsteady}
 u_\infty (x) =  \left[ e^{|x|^2/2 +\eta} - 1 \right]^{-1}
 \eeq
that is the steady state of equation \eqref{gen} corresponding to $\alpha = 1$. In this case the function $\beta$ is explicitly computable to give
$\beta(\urho)= \ln (1+\urho)$. Since $\alpha =1$, if the dimension $d \ge 3$, the Bose-Einstein distribution exhibits a {critical mass}. We remark
that in this case the energy functional $E(u)$ is the Bose-Einstein entropy
 \[
E(u) = u\ln u - (1+u) \ln(1+u).
 \]
One of the fundamental problems related to evolution equations that
relax towards a stationary state characterized by the existence of a
critical mass, is to show how, starting from an initial distribution
with a supercritical mass $\mass> \mass_{\rm c}$, the solution eventually develops
a singular part (the condensate), and, as soon as the singular part
is present, to be able to follow its evolution. We remark that in
general the condensation phenomenon is heavily dependent of the
dimension of the physical space. In dimension $d \le 2$, in fact,
the maximal mass $\mass_0$ of the Bose-Einstein distribution
\eqref{BEsteady} is unbounded, and the eventual formation of a
condensate is lost.

In order to simplify the mathematical difficulties, while maintaining the physical picture in which the steady state has a critical mass, in
\cite{BGT09} the one-dimensional case corresponding to a steady state of the form \eqref{steady-gen}, with $\alpha >2$ has been considered.  Note
that the analysis of \cite{BGT09} refers to a linear diffusion with a super-linear drift
 \[
\de_t \rho = \de_x\left( \de_x \rho  + x \rho(1+ \rho^\alpha) \right),
 \]
that is reminiscent of the Kaniadakis-Quarati model of Bose-Einstein particles \cite{KQ94}
 \begin{equation}
\label{FPB} \de_t \rho = \nabla\cdot \left( \nabla \rho + x \rho(1+ \rho)\right).
 \end{equation}

   \paragraph{A measure-theoretic setting for diffusion equations.}
   \
In the present paper we deal with an almost  complete description of the time-evolution of the solution of problem \eqref{eq:rho} with a Borel
measure as initial datum. While the mathematical study of drift-diffusion kinetic equations with the Bose-Einstein density as steady state has
been considered before (cfr. \cite{EHV98, EMV03} and the references therein), to our knowledge, drift-diffusion equations of type \eqref{eq:rho} at
present have never been studied systematically.

  Motivated by the previous remarks and by the degeneracy of the entropy functional $\cF$ introduced in
  \eqref{eq:29}, whose minimizers could exhibit concentration effect,
  we address the study of \eqref{eq:rho} by the measure-theoretic
  point of view recently developed in the framework of optimal
  transport \cite{ags}. This approach, started by the pioneering papers
  of \textsc{Jordan-Kinderlehrer-Otto} \cite{Jordan-Kinderlehrer-Otto98} and \textsc{Otto} \cite{Otto01},
  provides a sufficiently general setting for measure-valued
  solutions to \eqref{eq:rho}.

  $\PlusMeasuresTwo{\R,\frak m}$ endowed
  with the so called $L^2$-Wasserstein distance is the natural ambient
  space
  for carrying on our analysis.
  A first important fact is that
  the entropy functional $\cF$ \eqref{eq:29} turns out to be
  displacement $\lambda$-convex, a crucial property
  which holds only in the one-dimensional case, since the possibility
  of entropies satisfying \eqref{eq:32} is prevented by
  \textsc{McCann}'s condition
  \cite{McCann97}
  in higher dimensions.

  Moreover, we are able to extend the results of \cite{ags} (which
  for sublinear entropies covers the case when $\lim_{r\to+\infty}E(r)=-\infty$)
  providing an explicit characterization of the
  dissipation of $\cF$, which is strictly related to
  the ``Wasserstein differential'' of $\cF$.
  As a crucial byproduct of this analysis, we will find
  the right condition that measure-valued solutions have to satisfy
  in order to enjoy nice uniqueness and stability results. It
  is worth mentioning here that the distributional formulation of \eqref{eq:rho}
  does not provide enough information to characterize the solutions,
  when a concentration on a Lebesgue negligible set occurs.

  Applying the general theory
  of gradient flows of displacement $\lambda$-convex functionals in
  Wasserstein spaces, we can thus obtain a precise
  characterization
  of measure valued solutions to \eqref{eq:rho} and we can prove their
  existence, uniqueness, and stability.

  Further justifications showing that the notion of Wasserstein
  solutions is well adapted to \eqref{eq:rho} come from natural
  regularization/approximation results: we will show that
  our solutions are both the limit of the simplest vanishing viscosity
  approximation
  of \eqref{eq:rho} and of smooth solutions generated by
  regularization of the initial data.

  We complete our analysis by studying the propagation of the
  singularities, the structure of minimizers of $\cF$ and of
  stationary solutions,
  and the asymptotic behavior of the solutions, showing general
  convergence results to the minimizer of $\cF$.

  \paragraph{Plan of the paper.}
  In the next section we will make precise our definition
  of measure-valued solutions to \eqref{eq:rho} (\S
  \ref{subsec:def})
  and we
  will present our main results concerning existence, uniqueness,
  stability, and approximation of Wasserstein
  solutions (\S \ref{subsec:main}).
  The equation governing the propagation of their singularities is
  considered
  in \S \ref{subsec:propagation}.
  \S \ref{subsec:minimizers} is devoted to a precise characterization of
  minimizers
  of $\cF$ and of the critical mass; steady states are studied in \S
  \ref{subsec:stationary} and \S \ref{subsec:asymptotic}
  collects some results concerning the
  asymptotic behaviour of Wasserstein solutions.

  Section \ref{sec:Wass} briefly recalls some definitions and tools
  of (one-dimensional) optimal transport, Wasserstein distance, and
  the related (sub)differentiability properties of displacement
  $\lambda$-convex functionals. Theorems \ref{thm:lsc-dissipation}
  and \ref{th:charsubdiff} lie at the core
  of our further developments. A last paragraph
  devoted to a simple regularization of $\cF$ by
  $\Gamma$-convergence concludes the section.

  The last section contains the proofs of all our
  main results: the connection with
  the general theory is discussed in \S \ref{sec:gf} and
  \S \ref{sec:estimates} is devoted to the propagation
  of the singularities;
  the study of the minimizers of $\cF$ and of the
  related asymptotic behavior
  of the solutions to \eqref{eq:rho} is
  performed in the last part.


\section{Definitions and main results}

In this section we collect the main
definitions and results we shall prove in the rest of the paper.

\subsection{Wasserstein solutions to \eqref{eq:rho}}
\label{subsec:def}
\paragraph{The case of bounded initial densities and Lipschitz drifts.}
When (the Lebesgue density of) $\urho_0\in L^\infty(\R)$ and the potential $V$ is such that
\begin{equation}
  \label{eq:8}
  V''(x)\le \sfc\quad\text{for every }x\in \R,
\end{equation}
it is not difficult {(see \cite{Vazquez07} and next Corollary \ref{cor:linfty_bound})}
to show that {a smooth} solution $\urho_t$
{of \eqref{eq:rho}}
satisfies the \emph{a priori} estimate
\begin{equation}
  \label{eq:9}
  \sup_{t\in [0,T]}\|\urho_t\|_{L^\infty(\R)}\le R_T:=\|\urho_0\|_{L^\infty(\R)}{\mathrm e}^{\sfc\, T}\quad\text{for every }T>0,
\end{equation}
so that it is uniformly bounded in every bounded time interval $[0,T]$. We can infer from \eqref{eq:9} that the behavior of $\beta(r)$ as
$r\uparrow+\infty$ does not play any role,  and a solution in $[0,T]$ could be easily obtained by solving \eqref{eq:rho} with respect to a
nonlinearity $\tilde\beta$ defined for instance by
\begin{equation}
  \label{eq:10}
  \tilde\beta(r):=
  \begin{cases}
    \beta(r)&\text{if }r\le 2 R_T,\\
    \beta(2R_T)+\beta'(2R_T)(r-2 R_T)&\text{if }r>2R_T.
  \end{cases}
\end{equation}
Denoting by $\sfS_t(\urho_0)$ the solution $\urho_t$ generated by a bounded initial datum $\urho_0$, it is possible to check that $\sfS_t$ satisfies
the $L^1$ contraction property
\begin{equation}
  \label{eq:11}
  \big\|\sfS_t(\urho)-\sfS_t(\eta)\big\|_{L^1(\R)}\le \|\urho-\eta\|_{L^1(\R)}\quad\text{for every }\urho,\eta\in L^1(\R)\cap L^\infty(\R).
\end{equation}
Consequently $\sfS_t$ can be extended in a canonical way to a contraction semigroup in the cone $L^1_+(\R)$ of nonnegative integrable densities.

\paragraph{Measure-valued solutions.}
In case the Lebesgue density of $\urho_0$ is not bounded
{or $V$ does
not satisfy \eqref{eq:8}}, the presence of a singular part in the
solution $\urho$ of \eqref{eq:rho} has to be taken into account, since
the boundedness
of $\beta$ is responsible of the (possible) presence of a critical
mass. {We shall see an example of a solution $\rho_t$ exhibiting a
  singular part for every $t\ge0$ in the next Remark \ref{rem:singularex}.}

{In the following we will denote by $\PlusMeasures\R$ (resp.\
  $\PlusMeasures{\R,\mass}$) the space of nonnegative Borel
  measures in $\R$ with finite mass (resp.\ with prescribed mass
  $\mass>0$)
  and by $\PlusMeasuresTwo\R$ (resp.\ $\PlusMeasuresTwo{\R,\mass}$) the
  collection of measures in $\PlusMeasures\R$ (resp.\ in
  $\PlusMeasures{\R,\mass}$) with finite quadratic
  momentum.}
In order to enucleate a precise notion of measure-valued solution, for every $\urho\in \PlusMeasures\R$ we
consider the classical Lebesgue decomposition
\begin{equation}
  \label{eq:1}
  \urho=\urho^a+\urho^\perp,\quad
  \urho^a=u\,\Leb1,
\end{equation}
where $u\in L^1(\R)$ is the Lebesgue density of the absolutely continuous part $\urho^a$ of $\urho$ and $\urho^\perp$ is the  singular part of
$\urho$, concentrated on a set of Lebesgue measure $0$.

It is then natural to substitute the term $\beta(\urho)$ in \eqref{eq:rho} by $\beta(u)$ and then interpret \eqref{eq:rho} in the sense of
distributions. If we want 
to obtain a good notion of solution,
we should add some further requirements to the
density $u$. The first one is of qualitative type,
and relies in considering $u$ as a continuous function on $\R$ with values in the  extended set
$[0,+\infty]$, endowed with the usual topology.
\begin{definition}[Measures with continuous densities]
  \label{def:right_measures}
  We say that a measure $\urho=\urho^a+\urho^\perp\in \PlusMeasures\R$ has
  a generalized continuous density $u\in C^0(\R;[0,+\infty])$ with
  proper
  domain $\Dom u:=\big\{x\in \R:u(x)<+\infty\big\}$ if
  \begin{align}
    \label{eq:2}
    \urho^\perp\big(\Dom u\big)=0,\quad
    \Leb 1\big(\R\setminus \Dom u\big)=0,\quad\text{and}\quad
    \urho^a=u\,\Leb 1\restr{\Dom u}.
  \end{align}
  {We set $\Domp u:=\big\{x\in \sfD(u):u(x)>0\big\}$.}
  We denote by $\CPlusMeasures\R$ the collection of all measures with
  generalized continuous density and we set
  $\CPlusMeasuresTwo\R:=\CPlusMeasures\R\cap \PlusMeasuresTwo\R$,
  $\CPlusMeasuresTwo{\R,\mass}:=\CPlusMeasures\R\cap \PlusMeasuresTwo{\R,\mass}$.
\end{definition}
Notice that $\Dom u$ is a \emph{dense open} subset of $\R$, $\urho^\perp=\urho\restr{\R\setminus \Dom u}$, and
\begin{equation}
  \label{eq:4}
  \lim_{x\to \bar x}u(x)=+\infty\quad\text{for every }\bar
  x\in \partial \Dom u=\R\setminus
  \Dom u.
\end{equation}
In particular, $\CPlusMeasures\R$ does not contain any purely singular measure: if $\urho^a=0$ then also $\urho^\perp$ vanishes.

If $\urho\in \CPlusMeasures\R$ then we will always identify its Lebesgue density $\d\urho/\d\Leb1$ with the (unique) continuous precise
representative $u\in C^0(\R;[0,+\infty])$  given by Definition \ref{def:right_measures}. By \eqref{hp:beta} we can consider $\beta$ as a
continuous function defined on the extended set $[0,+\infty]$ and therefore the composition $\beta\circ u$ is a well defined real continuous
function on $\R$.

The second requirement is a quantitative estimate concerning
the ``generalized Fisher'' dissipation functional.

\begin{definition}[Generalized Fisher dissipation]
  If $\urho$ belongs to $\CPlusMeasures\R$ with continuous density $u$
  we set
  \begin{equation}
    \label{eq:5}
    {\cI}(\urho):=\int_{{\Domp u}} \Big|\frac{\partial_x \beta(u)}{u}+V'\Big|^2 u\,\d
    x+\int_{\R} |V'|^2\,\d\urho^\perp\quad \text{if}\quad\beta\circ
    u\in W^{1,1}_{\rm loc}(\R).
  \end{equation}
  When $\beta\circ u\not\in W^{1,1}_{\rm loc}(\R)$ or $\urho\not\in
  \CPlusMeasures\R$, we simply set $\cI(\urho):=+\infty.$
\end{definition}
It turns out that $\cI$ is a lower semicontinuous functional with respect to weak
  convergence of measures in $\PlusMeasures\R$
  (see Theorem \ref{thm:lsc-dissipation2}).
\begin{definition}[Wasserstein solutions to \eqref{eq:rho}]
  \label{def:measuresolution}
  We say that $\urho\in C^0([0,+\infty);\PlusMeasuresTwo{\R,\mass})$
  is a Wasserstein solution of problem \eqref{eq:rho} if,
  denoting by $\urho_t$ the measure $\urho$ at the time $t$,
  \begin{subequations}
    \begin{equation}
    \label{eq:3}
    \urho_t\in \CPlusMeasures\R \text{ for $\Leb 1$-a.e.\ $t>0$,}
  \end{equation}
  \begin{equation}\label{eq:3bis}
    \int_{T_0}^{T_1} \cI(\urho_t)\,\d t<+\infty\quad\text{for every } 0< T_0<T_1<+\infty,
  \end{equation}
  and
  \begin{equation}\label{eq-debole}
    \int_0^{+\infty} \int_\R\Big(-\de_t\varphi+ \de_x\varphi V'\Big)
    \,\d\urho_t\,  \d t+
    \int_0^{+\infty}\int_\R\de_x\varphi\, \de_x\beta(u_t)\,\d
    x\,\d t
    =0 \quad \forall \varphi\in C_{\rm c}^\infty((0,+\infty)\times \R),
  \end{equation}
  \end{subequations}
  where $u_t$ is the generalized continuous density of $\urho_t$ for
  $\Leb1$-a.e. $t>0$.
\end{definition}

\begin{remark}[Convergence in $\PlusMeasuresTwo{\R,\frak m}$]
  {$\PlusMeasuresTwo{\R,\mass}$ is a complete metric space endowed with}
  the so called $L^2$-Wasserstein
    distance $W_2(\cdot,\cdot)$.
  More details on this distance will be given in the next section;
  let us just recall that a sequence $\urho_n$
  converges to $\urho$ in $\PlusMeasuresTwo{\R,\frak m}$ as $n\uparrow+\infty$ iff
  \begin{equation}
    \label{eq:14}
    \lim_{n\uparrow+\infty}\int_\R\varphi(x)\,\d\urho_n(x)=
    \int_\R\varphi(x)\,\d\urho(x)\quad\text{for every }\varphi\in
    C^0(\R)\text{ with }
    \sup_x\frac{|\varphi(x)|}{1+x^2}<+\infty.
  \end{equation}
\end{remark}

\begin{remark}[The role of the generalized continuous density]
  By neglecting condition \eqref{eq:3} one can easily  construct
  evolutions of purely
  singular measures which solve \eqref{eq-debole} and are not
  influenced at all by the diffusion term. We
  take a finite number of $C^1$ curves $x_j:[0,+\infty)\to
  \R$, $i=1,\cdots,N$,
  which solve the differential equation $\dot x_j(t)=-V'(x_j(t))$ in
  $[0,+\infty)$, and we set
  \begin{equation}
    \label{eq:6}
    \urho_t:=\sum_{j=1}^N \alpha_j \delta_{x_j(t)}, \qquad \alpha_j\ge0.
  \end{equation}
  In this case $\urho^a_t\equiv0$ for every $t\ge0$, which implies
  $\beta(u_t)\equiv0$ and \eqref{eq-debole} contains just the pure
  transport contribution given by the first integral.
  On the other hand, by taking a smooth approximating family
  $\urho^\eps \to \urho_0$ in $\PlusMeasuresTwo{\R,\frak m}$, we can see that \eqref{eq:6} is not the
  limit of the corresponding solution $\urho^\eps_t$ as $\eps\downarrow0$ (see Theorem \ref{thm:main1}).
\end{remark}

\paragraph{Energy functional and Fisher dissipation.}
In order to understand both the role of the generalized Fisher
dissipation and the consequences of \eqref{eq:3bis},
let us recall the definition \eqref{eq:26} of the so-called
internal energy density $E:[0,+\infty)\to \R$ by the relation
\begin{equation}\label{def:E}
  E(r):=-\betainfty+
  \int_r^{+\infty} \big(1-\frac{r}s\big)\beta'(s)\,\d  s
  =
  -\beta(r)-
  r\int_r^{+\infty}\frac{\beta'(s)}s\,\d  s.
\end{equation}
It is simple to check that $E$ is a convex nonpositive function satisfying
\begin{equation}\label{eq:12}
  E\in C^2(0,+\infty),\quad
  E(0)=0, \quad
  \lim_{r\to 0^+}\frac{E(r)}{r\log r} = \beta'(0) \in [0,+\infty), \quad
  \Einfty=\lim_{r\uparrow+\infty}E(r)=-\betainfty,
\end{equation}
and
\begin{equation}\label{eq:121}
  \beta'(r)=rE''(r), \quad
  E'(r)=-\int_r^{+\infty}\frac{\beta'(s)}s\,\d  s < 0, \qquad \forall r \in (0,+\infty).
\end{equation}
We associate the integral functional
\begin{equation}
  \label{eq:54}
  \cE(\urho):=\int_\R E(u(x))\,\d x \quad   \text{whenever}\quad
  \urho=u\Leb1 +\urho^\perp\in \PlusMeasures\R,
\end{equation}
to the energy density $E$, the potential energy
\begin{equation}
  \label{eq:83}
  \cV(\rho):=\int_\R V(x)\,\d\urho(x)
\end{equation}
to the potential $V$,
and the energy functional $\cF:\PlusMeasures{\R}\to (-\infty,+\infty]$
\begin{equation}\label{def:EE}
\cF(\urho):=\cE(\urho)+\cV(\rho).
\end{equation}
Formal computations show that $\cF$ and $\cI$
satisfy the energy dissipation identity along solutions to \eqref{eq:rho}
\begin{equation}
  \label{eq:13}
  \cF(\urho_{t_1})+\int_{t_0}^{t_1} \cI(\urho_t)\,\d t=\cF(\urho_{t_0}) \qquad 0\leq t_0<t_1<+\infty.
\end{equation}

\subsection{Existence, stability, and approximation results.}
\label{subsec:main}

  Recall that $\lambda\in \R$ is a lower bound for the second
  derivative of $V$, see \eqref{crescita-quadratica}. Let us set
  \begin{equation}
    \label{eq:27}
    \sfE_\lambda(t):=\int_0^t \rme^{\lambda s}\,\d s=
    \begin{cases}
      \frac{\rme^{\lambda t}-1}\lambda&\text{if }\lambda\neq 0,\\
      t&\text{if }\lambda=0.
    \end{cases}
  \end{equation}

\begin{theorem}[Existence, uniqueness, stability, and comparison]
  \label{thm:main1}
  For every $\urho_0\in \PlusMeasuresTwo{\R,\mass}$ there exists a
  unique Wasserstein solution $\urho_t$ to \eqref{eq:rho} according to Definition
  \ref{def:measuresolution}. This solution satisfies the regularization
  estimate
  \begin{equation}
    \label{eq:18}
    \cF(\urho_t)+\frac{\sfE_\lambda(t)}2\cI(\urho_t)\le \frak
    m V(0)+\frac 1{2\sfE_\lambda(t)}\Mom 2{\urho_0}\quad\text{for every }t>0,
  \end{equation}
  the energy dissipation identity \eqref{eq:13}, and the dissipation inequality
  \begin{equation}
    \label{eq:19}
    \cI(\urho_t)\le \cI(\urho_{t_0}) \rme^{-2\lambda (t-t_0)}, \qquad \forall\, t\geq t_0\ge0.
  \end{equation}
  The map $\sfS_t:\PlusMeasuresTwo{\R,\mass}\to\PlusMeasuresTwo{\R,\mass}$
  defined by $\sfS_t(\urho_0)=\urho_t$ is a semigroup of
  continuous maps in $\PlusMeasuresTwo{\R,\frak m}$ satisfying the
  stability property
  \begin{equation}
    \label{eq:15}
    W_2(\sfS_t(\urho_0),\sfS_t(\eta_0))\le \rme^{-\lambda t}W_2(\urho_0,\eta_0).
  \end{equation}
  If moreover $\rho_0\le \eta_0$ then $\sfS_t(\rho_0)\le
  \sfS_t(\eta_0)$ for every $t\ge0$.

\end{theorem}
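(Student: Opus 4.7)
The plan is to identify Wasserstein solutions of \eqref{eq:rho} with trajectories of the gradient flow of $\cF$ on the metric space $(\PlusMeasuresTwo{\R,\mass},W_2)$ and then invoke the abstract theory of gradient flows of displacement $\lambda$-convex functionals from \cite{ags}. A preliminary step is to check that $\cF$ is displacement $\lambda$-convex: the internal energy $\cE$ meets McCann's convexity condition in dimension one even though $E(\infty)=-\betainfty$ is finite (the reason the analysis is restricted to $d=1$), while $\cV$ inherits $\lambda$-convexity directly from \eqref{crescita-quadratica}.

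Once displacement $\lambda$-convexity is in place, for every $\urho_0\in\PlusMeasuresTwo{\R,\mass}$ the abstract theory produces a unique locally absolutely continuous curve $t\mapsto\urho_t$ solving the Evolution Variational Inequality
\begin{equation*}
  \tfrac12\tfrac{\d}{\dt}W_2^2(\urho_t,\sigma)+\tfrac{\lambda}{2}W_2^2(\urho_t,\sigma)+\cF(\urho_t)\le\cF(\sigma)\qquad\forall\,\sigma\in\PlusMeasuresTwo{\R,\mass}.
\end{equation*}
The standard consequences of this EVI then deliver, in order: the contraction \eqref{eq:15}; the regularization estimate \eqref{eq:18}, obtained by testing with $\sigma=\mass\delta_0$ and using $\cF(\mass\delta_0)=\mass V(0)$; the energy dissipation identity \eqref{eq:13}; and the exponential decay \eqref{eq:19}. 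The last two items hinge on the identification, provided by Theorems \ref{thm:lsc-dissipation} and \ref{th:charsubdiff}, of the (relaxed) metric slope of $\cF$ with $\cI^{1/2}$.

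The decisive step is to identify these EVI trajectories with Wasserstein solutions in the sense of Definition \ref{def:measuresolution}. Invoking Theorem \ref{th:charsubdiff}, whenever $\cI(\urho)<+\infty$ the minimal element of the Wasserstein subdifferential $\partial\cF(\urho)$ is the velocity field equal to $-\partial_x\beta(u)/u-V'$ on $\Domp u$ and to $-V'$ on $\urho^\perp$; substituting this field into the continuity equation satisfied by the gradient flow curve reproduces exactly \eqref{eq-debole}. Conversely, assumption \eqref{eq:3bis} forces any Wasserstein solution to be absolutely continuous in $(\PlusMeasuresTwo{\R,\mass},W_2)$ with metric derivative bounded by $\cI(\urho_t)^{1/2}$, and the chain rule for $\lambda$-convex functionals applied along the continuity equation \eqref{eq-debole} recovers \eqref{eq:13}, showing that any such curve is a curve of maximal slope and thus coincides with the gradient flow. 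I expect this identification to be the main obstacle: the framework of \cite{ags} excludes singular parts through the condition $E(\infty)=-\infty$, so everything rests on the refined slope/subdifferential computations of Section \ref{sec:Wass} that extend the AGS theory and correctly attribute to the singular component a pure transport contribution by $-V'$.

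Finally, the order-preservation $\rho_0\le\eta_0\Rightarrow\sfS_t(\rho_0)\le\sfS_t(\eta_0)$ does not follow from the abstract theory and I would prove it by approximation. For bounded $L^1\cap L^\infty$ data and the truncated nonlinearity $\tilde\beta$ of \eqref{eq:10}, the classical parabolic comparison principle combined with the $L^1$-contraction \eqref{eq:11} yields monotonicity of the smooth semigroup; approximating an ordered pair $\rho_0\le\eta_0$ in $\PlusMeasuresTwo{\R,\mass}$ by ordered smooth data and passing to the limit via \eqref{eq:15}, together with the preservation of order under weak convergence of measures, then gives the claim in full generality.
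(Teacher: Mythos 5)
Your argument for existence, uniqueness, the contraction \eqref{eq:15}, the regularization estimate \eqref{eq:18}, the dissipation identity \eqref{eq:13} and the decay \eqref{eq:19} follows essentially the same route as the paper: reduce to the abstract theory of gradient flows of displacement $\lambda$-convex functionals (the paper invokes \cite[Theorem 11.2.1]{ags} and \cite{S10} directly rather than writing out the EVI, but the content is the same), and close the loop with the identification of $\partial\cF$ and $\cI$ from Theorem \ref{th:charsubdiff}. Your reading of \eqref{eq:18} as the standard regularizing estimate tested at $\sigma=\mass\delta_0$, using $\cF(\mass\delta_0)=\mass V(0)$ and $W_2^2(\rho_0,\mass\delta_0)=\Mom2{\rho_0}$, is exactly what is hidden behind the reference in the paper's proof.

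The one place where you depart from the paper, and where a small gap appears, is the comparison principle. You propose to regularize by bounded data and the truncated nonlinearity $\tilde\beta$ of \eqref{eq:10}, then use the $L^1$-contraction \eqref{eq:11} and pass to the limit via \eqref{eq:15}. That chain rests on the a priori $L^\infty$ bound \eqref{eq:9}, and hence on the extra assumption \eqref{eq:8}, $V''\le\sfc$; without an upper bound on $V''$ the truncated nonlinearity need not agree with $\beta$ along the solution and the $L^1$ contraction for bounded densities is not available. The paper sidesteps this by going through the viscous regularization \eqref{eq:17} of Theorem \ref{thm:main2}: one adds $\eps r$ to $\beta$ (so the approximating problem is genuinely nondegenerate parabolic and classically comparable) and, simultaneously, replaces $V$ by a Lipschitz $V^\eps$ as in \eqref{eq:82}--\eqref{eq:86}; comparison for those approximants plus the stability of Theorem \ref{thm:main2} then delivers the claim for general $\lambda$-convex $V$. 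Replacing your truncation step by this viscous route (or else adding a regularization of $V$ before truncating $\beta$) would close the gap; the rest of your proposal is sound.
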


  \begin{remark}[Singularities]
    \label{rem:singular}
    Recalling the definition \eqref{eq:5} of $\cI$, the regularization
    estimate \eqref{eq:18} shows that the solution given by Theorem
    \ref{thm:main1} satisfies $\urho_t\in\CPlusMeasuresTwo{\R,\mass}$
    for every $t>0$.
  \end{remark}

{In the case when $V'$ is Lipschitz, the stability property
  \eqref{eq:15} and a simple regularization of the initial datum
  show that Wasserstein solutions are the limit of locally bounded
  solutions satisfying \eqref{eq:9}.}
Another way to see that Definition \ref{def:measuresolution} provides the right notion of solution involves a classical viscous regularization of
\eqref{eq:rho} {combined with a suitable regularization of the potential $V$}.
Given a small parameter $\eps>0$ let us consider the perturbed nonlinear functions
\begin{equation}
  \label{eq:16}
  \beta^\eps(r):=\beta(r)+\eps r,\quad r\in [0,+\infty),
\end{equation}
{and a family $V^\eps$ of smooth and Lipschitz potentials such that
  \begin{subequations}
    \begin{gather}
    \label{eq:82}
     V^\eps(x)\le V(x)+A|x|^2\quad
      \lambda\le (V^\eps)''(x)\le \sup_\R V''\qquad
      \text{for every }x\in\R,\\
      \label{eq:85}(V^\eps)^{(h)} \to V^{(h)}
      \quad\text{as }\eps\downarrow0
    \quad\text{uniformly on compact sets of }\R,
    \quad h=0,1,2,\\
    \label{eq:86}
    \liminf_{|x|\to\infty}\frac{V^\eps(x)}{|x|^2}\ge0\quad\text{uniformly
      with respect to $\eps$.}
 \end{gather}
  \end{subequations}
}
For every $\urho^\eps_{0}\in \PlusMeasuresTwo{\R,\frak m}$ we consider the problem
\begin{equation}\label{eq:17}
  \partial_t \urho^\eps-\partial_x\big(\partial_x\beta^\eps(u^\eps)+(V^\eps)' \urho^\eps\big)=0,
  \quad {\rm in }\ (0,+\infty)\times  \R;
    \quad
    \urho^\eps(0,\cdot)=\urho^\eps_0,\quad \text{in }\R,
\end{equation}
the associated energy functional
\begin{equation}
  \cE^\eps(\urho)=
  \begin{cases}
    \cE(\urho) + \eps\displaystyle \int_\R u\log u \,\d x & \text{if } \urho=u\Leb{1} \ll \Leb{1}\\
    +\infty & \text{if } \urho^\perp \not= 0
  \end{cases}
  \quad
  {\cV^\eps(\rho):=\int_\R V^\eps(x)\,\d\rho,\quad
  \cF^\eps=\cE^\eps+\cV^\eps,}
\end{equation}
and the corresponding Fisher-dissipation
\begin{equation}
  \label{eq:46bis}
  \cI^\eps(\urho):=\int_\R\Big|\frac{\partial_x\beta^\eps(u)}u+(V^\eps)'\Big|^2u\,\d x \qquad\text{if }\urho=u\Leb1,\
  \beta^\eps(u)\in W^{1,1}_{\rm loc}(\R).
\end{equation}
As usual $\cI^\eps(\urho)=+\infty$ if $u\not\in W^{1,1}_{\rm loc}(\R)$
or
$\urho\not\ll\Leb 1$.
\begin{theorem}[Convergence of viscous regularizations]
  \label{thm:main2}
  For every {$\urho^\eps_{0}=u^\eps_0\Leb1\in \PlusMeasuresTwo{\R,\frak
    m}$
  with $u^\eps_0\in C^1_c(\R)$},
  there exists a unique {smooth} solution $\urho^\eps=u^\eps\Leb 1\in
  C^0([0,+\infty); \PlusMeasuresTwo{\R,\frak m})$
  of problem \eqref{eq:17} {satisfying $\cI^\eps(\rho^\eps)\in L^1_{loc}(0,+\infty)$.}
%
  Moreover \eqref{eq:18}, \eqref{eq:19}, and \eqref{eq:13} hold with $\cF, \cI$ replaced by $\cF^\eps, \cI^\eps$, respectively.

  If $\urho^\eps_0\to \urho_0$ in $\PlusMeasuresTwo{\R,\frak m}$
  and
  $\sup_\eps \cF^\eps(\urho^\eps)<+\infty$, then $\urho^\eps_t$ converges in
  $\PlusMeasuresTwo{\R,\frak m}$ to the unique Wasserstein solution $\urho_t$ of
  problem \eqref{eq:rho}
   as $\eps\downarrow0$ for every $t>0$.
  Moreover
  $u^\eps_t\to u_t$ uniformly on compact sets
  of $\Dom{u_t}$ for every $t>0$.
\end{theorem}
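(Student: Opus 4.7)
I would organize the argument in four stages.

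\emph{Stage 1 (well-posedness at fixed $\eps>0$).} Since $(\beta^\eps)'(r)=\beta'(r)+\eps\ge\eps>0$ and $V^\eps$ has bounded second derivative by \eqref{eq:82}, equation \eqref{eq:17} is a uniformly parabolic quasilinear equation with smooth and globally Lipschitz drift. Classical quasilinear parabolic theory (Ladyzhenskaya–Solonnikov–Ural'tseva) applied to the initial datum $u_0^\eps\in C_c^1(\R)$ yields a unique smooth global classical solution $u^\eps\in C^{2,1}((0,+\infty)\times\R)$ with $\rho^\eps_t(\R)=\mass$ and, by a Gr\"onwall estimate for $\Mom 2{\rho^\eps_t}$ based on \eqref{eq:86}, $\rho^\eps_t\in\PlusMeasuresTwo{\R,\mass}$ for every $t\ge0$.

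\emph{Stage 2 (level-$\eps$ identities).} The density $E^\eps(r):=E(r)+\eps\,r\log r$ satisfies McCann's one-dimensional displacement convexity condition, and $V^\eps$ is $\lambda$-convex by \eqref{eq:82}; hence $\cF^\eps=\cE^\eps+\cV^\eps$ is displacement $\lambda$-convex on $\PlusMeasuresTwo{\R,\mass}$. Equation \eqref{eq:17} is the Wasserstein gradient flow of $\cF^\eps$, so the general theory of $\lambda$-convex gradient flows recalled in Section \ref{sec:Wass} gives the energy identity \eqref{eq:13}, the dissipation inequality \eqref{eq:19}, and the regularization estimate \eqref{eq:18} with $(\cF,\cI)$ replaced by $(\cF^\eps,\cI^\eps)$. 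Combined with $\rho_0^\eps\in\PlusMeasuresTwo{\R,\mass}$, the latter yields $\cI^\eps(\rho^\eps_\cdot)\in L^1_{loc}(0,+\infty)$.

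\emph{Stage 3 (compactness and identification of the limit).} From $\sup_\eps\cF^\eps(\rho_0^\eps)<+\infty$ and the convergence of $\rho_0^\eps$ in $\PlusMeasuresTwo{\R,\mass}$, together with the level-$\eps$ identities, I obtain uniform bounds on $\cF^\eps(\rho^\eps_t)$, $\Mom 2{\rho^\eps_t}$ and $\int_{T_0}^{T_1}\cI^\eps(\rho^\eps_t)\,\d t$ for $0<T_0<T_1$. The gradient-flow metric-velocity bound $|\dot\rho^\eps_t|_{W_2}^2\le\cI^\eps(\rho^\eps_t)$ renders $t\mapsto\rho^\eps_t$ equicontinuous in $W_2$, so a refined Ascoli–Arzel\`a argument extracts a subsequence $\rho^\eps_t\to\tilde\rho_t$ in $\PlusMeasuresTwo{\R,\mass}$, uniformly on compact time intervals. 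I would then pass to the limit in \eqref{eq-debole}: the drift term is handled via \eqref{eq:85} and $W_2$-convergence; the viscous contribution $\eps\partial_x u^\eps$ vanishes distributionally since, by Cauchy–Schwarz on any compact $K\subset\R$,
\[
\Big|\eps\int_K\partial_x u^\eps\,\d x\Big|\le\sqrt{\eps}\,\Big(\eps\int_K\frac{(\partial_x u^\eps)^2}{u^\eps}\,\d x\Big)^{1/2}\Big(\int_K u^\eps\,\d x\Big)^{1/2}=O(\sqrt{\eps})
\]
via the $\cI^\eps$ bound; and $\partial_x\beta(u^\eps)\to\partial_x\beta(\tilde u)$ is controlled using Stage 4. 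Theorem \ref{thm:lsc-dissipation2} then yields $\int_{T_0}^{T_1}\cI(\tilde\rho_t)\,\d t\le\liminf_\eps\int_{T_0}^{T_1}\cI^\eps(\rho^\eps_t)\,\d t<+\infty$, so $\tilde\rho_t\in\CPlusMeasuresTwo{\R,\mass}$ a.e., the curve $\tilde\rho$ is a Wasserstein solution, and the uniqueness part of Theorem \ref{thm:main1} identifies it as $\rho_t$, upgrading the subsequential convergence to the full family.

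\emph{Stage 4 (local uniform convergence of $u^\eps$, the main obstacle).} The crux is upgrading $W_2$-convergence to uniform convergence of the densities on compacts $K\subset\Dom{u_t}$. I would proceed in three substeps: (i) on such $K$ the limit $u_t$ is finite, and combining weak convergence of $\rho^\eps_s$ with the continuity of $u_s$ on $\Dom{u_s}$ ensured by Definition \ref{def:right_measures}, I show that $u^\eps_s$ is uniformly bounded on a slightly enlarged compact $K'\supset K$ for $s$ in a small time window around $t$; (ii) on this window equation \eqref{eq:17} is uniformly parabolic with ellipticity constants depending only on $\sup_{K'}u^\eps_s$ and not on $\eps$, since $\beta'>0$ on any bounded range; (iii) De Giorgi–Nash–Moser (or Krylov–Safonov) parabolic regularity then provides uniform H\"older bounds for $u^\eps$ on $K$, whence Ascoli–Arzel\`a yields uniform convergence $u^\eps\to u$ along a subsequence, identified as the full family via the $W_2$-limit. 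The hardest step is (i): without a pointwise $L^\infty$ bound, uniform parabolicity is lost in the limit, so the bootstrap between weak measure convergence and local $L^\infty$ control is the key technical difficulty, made tractable precisely because Definition \ref{def:right_measures} excludes singular mass from $\Dom{u_t}$.
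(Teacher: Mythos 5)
Your Stages 1 and 2 match the paper in substance (existence and the level-$\eps$ identities both come from the gradient-flow machinery applied to $\cF^\eps$), and Stage 3 is a legitimate alternative to what the paper actually does: rather than the direct compactness-and-passage-to-the-limit you propose, the paper invokes the abstract stability theorem for gradient flows under $\Gamma$-convergence ([ags, Thm.\ 11.2.1] together with Lemma~\ref{le:Gamma-convergence}), which produces $W_2$-convergence of $\sfS_t^\eps(\rho_0^\eps)$ to $\sfS_t(\rho_0)$ in one stroke, with no need to rebuild equicontinuity or to identify a subsequential limit. Your route is more hands-on and in principle sound, but note that as written it is circular: in Stage~3 you defer the identification of $\de_x\beta(u^\eps)$ in the limit to Stage~4, so the whole argument hinges on Stage~4.

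And Stage~4 step (i) is where the genuine gap lies. You claim that from weak convergence of $\rho^\eps_s$ together with continuity of $u_s$ on $\Dom{u_s}$ you can extract a uniform-in-$\eps$ local $L^\infty$ bound on $u^\eps_s$ on a compact $K'\subset\Dom{u_t}$. This does not follow: narrow convergence of measures controls integrals of test functions, not sup-norms of densities, and one can readily construct densities with spikes of height $\to\infty$ on sets of shrinking measure whose associated measures converge weakly to a bounded one. Without the local $L^\infty$ bound the uniform ellipticity in step (ii) fails, and the whole De Giorgi--Nash--Moser bootstrap collapses. The correct mechanism — which the paper isolates in Theorem~\ref{thm:lsc-dissipation} and uses verbatim here via \eqref{eq:219} — is to exploit the \emph{Fisher dissipation bound} $\sup_\eps\cI^\eps(\rho^\eps_t)<+\infty$: writing $G^\eps(r)=\int_0^r\beta'(s)/\sqrt s\,\d s+2\eps\sqrt r$, the bound on $\cI^\eps$ gives $\|\de_x G^\eps(u^\eps)\|_{L^2_\loc}$ uniformly bounded, hence $G^\eps(u^\eps)$ precompact in $C^0_\loc$, hence $u^\eps\to u$ locally uniformly on $\Dom u$. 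This replaces your entire parabolic-regularity argument by an elementary one-dimensional Sobolev embedding, needs no time-slab localization, and does not require any lower bound $u^\eps>0$ beyond what the viscous equation already provides; you should invoke Theorem~\ref{thm:lsc-dissipation} for Stage~4 rather than attempting the $L^\infty$-to-H\"older bootstrap, which cannot get off the ground from weak convergence.
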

The proofs of Theorems \ref{thm:main1} and \ref{thm:main2} {take
  advantage of}
the theory of gradient flows of convex functionals with respect to the
Wasserstein distance \cite{ags} and  will be given in Section \ref{sec:gf}.

    \begin{remark}[Non smooth potentials]
    \label{rem:nonsmooth}
    Theorems \ref{thm:main1} and \ref{thm:main2} are still true in the case when $V$ is a
    general $\lambda$-convex function, i.e.\ the condition
    \eqref{crescita-quadratica} on the lower bound on $V''$ (which we
    assumed
    for the sake of simplicity) is
    replaced by
    \begin{equation}
      \label{eq:79}
      x\mapsto V(x)-\frac\lambda 2x^2\quad\text{is convex in }\R.
    \end{equation}
    \eqref{eq:79} implies that $V$ is differentiable $\Leb 1$-almost
    everywhere, so that the first occurence of $V'$ in the definition
    \eqref{eq:5} of $\cI$
    still makes sense as it is integrated with respect to $\Leb
    1$. The second integral term in \eqref{eq:5} should be replaced by
    \begin{equation}
      \label{eq:81}
      \int_\R |\partial^\circ  V(x)|^2\,\d\rho^\perp(x)
    \end{equation}
    where $\partial^\circ V(x)$ denotes the element of minimal norm in
    the (non empty) Frechet subdifferential $\partial V$ of $V$.
  \end{remark}

\subsection{Propagation of singularities.}
\label{subsec:propagation}

  In this section we want to study the evolution of the singular
  part $\rho_t^\perp$ of the Wasserstein solution $\rho_t$ to
  \eqref{eq:rho}.
  By Remark \ref{rem:singular} we know that $\rho_t=u_t\Leb1+\rho_t^\perp\in
  \CPlusMeasuresTwo\R$ for every $t>0$, so that the support of
  $\rho_t^\perp$ coincides with
  the set where the (continuous representative of the) density $u_t$
  takes the value $+\infty$. We thus call

$\CDom{u_t}:=\R\setminus\Dom{u_t}=\big\{x\in
\R:u_t(x)=+\infty\big\} $
{and we will show that the evolution of $\CDom{u_t}$ follows the
flow generated by $-V'$.}

Let us first introduce the evolution semigroup $\sfX$ on $\R$ generated by $-V'$, thus satisfying
\begin{equation}
  \label{eq:20}
  \frac \d{\d t}\sfX_t(x)=-V'(\sfX_t(x)),\quad
  \sfX_0(x)=x\quad\text{for every }x\in \R.
\end{equation}
Since $V'$ is of class $C^1$ and, by \eqref{crescita-quadratica},
$$\big(V'(x)-V'(y)\big)(x-y)\ge\lambda|x-y|^2\quad \text{for every }x,y\in \R,$$
$\sfX_t$ is a family of diffeomorphisms {mapping $\R$ onto the open
  set $\sfR_t:=\sfX_t(\R)$.}
We set
\begin{equation}
  \label{eq:21}
  {\mathsf J}_t:=\sfX_t\big(\CDom{u_0}\big),\quad
  {\mathsf D}_t:=\sfX_t\big(\Dom{u_0}\big),\quad t\ge0,
\end{equation}
and we notice that $\mathsf J_t={\sfR_t}\setminus \mathsf D_t$ is a closed
subset of {$\sfR_t$}, since $\mathsf D_t$ is open.

{If $\sigma\in \PlusMeasures\R$, the push-forward $(\sfX_t)_\#\sigma$
  through
  $\sfX_t$ is the Borel measure defined by}
 \[
 (\sfX_t)_\# \sigma(A) := \sigma\big(\sfX_t^{-1}(A)\big)\quad\text{for each Borel set $A \subset \R$}.
 \]
 \begin{theorem}[Propagation of singularities]
  \label{thm:main3}
  If $\urho_0\in\CPlusMeasuresTwo{\R}$ and
  $\urho_t=u_t\Leb 1+\urho_t^\perp\in \CPlusMeasuresTwo\R$ is the
  unique {Wasserstein} solution of \eqref{eq:rho},
  then
  \begin{equation}
    \label{eq:68}
    \partial_t \urho_t^\perp-\partial_x\big(\urho_t^\perp\,V'\big)\le
    0\quad\text{in the sense of distributions,}
    \quad
    {\lim_{t\downarrow0}\rho_t^\perp\le \rho_0^\perp
    \quad\text{weakly in }\PlusMeasures\R.}
  \end{equation}
  In particular
  \begin{equation}
    \label{eq:22}
    \CDom{u_t}\subset \mathsf J_t, \quad
    \urho_t^\perp\le (\sfX_t)_\#\urho_0^\perp,\quad
    \text{for every }t\ge0,
  \end{equation}
  so that for every Borel set $A\subset \R$
  \begin{equation}\label{eq:23}
    \urho_t^\perp(A)\le \urho_0^\perp \big(\sfX_t^{-1}(A)\big).
  \end{equation}
  In particular $\urho_t^\perp$ is concentrated in $\sfX_t(\CDom{u_0})$
  and $u_t$ is finite in $\sfX_t(\Dom{u_0})$.
\end{theorem}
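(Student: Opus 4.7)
\medskip
\noindent\emph{Proof proposal.}
The plan is to first establish the distributional differential inequality in \eqref{eq:68}, and then deduce the quantitative bounds \eqref{eq:22}--\eqref{eq:23} by a characteristics argument against the transport semigroup generated by $-V'$.

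\textbf{Step 1 (subsolution property of the ac part).} Fix $\varphi\in C_{\mathrm c}^\infty((0,+\infty)\times\R)$ with $\varphi\ge 0$. Using the Lebesgue decomposition $\rho_t=u_t\Leb 1+\rho_t^\perp$ in the weak formulation \eqref{eq-debole}, the claimed distributional inequality
\begin{equation*}
\int_0^{+\infty}\!\!\int_\R\big(\partial_t\varphi-V'\partial_x\varphi\big)\,\d\rho_t^\perp\,\d t\ge 0
\end{equation*}
is equivalent to showing that $u_t$ is a weak subsolution of the full PDE, i.e.\ that
\begin{equation*}
\int_0^{+\infty}\!\!\int_\R\big(-\partial_t\varphi+V'\partial_x\varphi\big)u_t\,\d x\,\d t+\int_0^{+\infty}\!\!\int_\R\partial_x\varphi\,\partial_x\beta(u_t)\,\d x\,\d t\le 0.
\end{equation*}
To prove this subsolution inequality I would approximate $\rho_t$ by the viscous regularizations $\rho^\eps_t=u^\eps_t\Leb 1$ of Theorem~\ref{thm:main2}: these are smooth, satisfy the equation pointwise, and $\rho^\eps_t\to\rho_t$ in $W_2$ with $u^\eps_t\to u_t$ uniformly on compact subsets of $\Dom{u_t}$. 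Inserting $\varphi$ in the equation for $u^\eps_t$ gives an exact identity, and splitting the spatial integral into the sub-level set $\{u^\eps_t\le M\}$ (on which $u^\eps_t$ converges uniformly on compacts of $\Dom{u_t}$ and $\beta^\eps(u^\eps_t)\to\beta(u_t)$ in $W^{1,1}_{\mathrm{loc}}$ thanks to the uniform Fisher bound \eqref{eq:18}) and its complement $\{u^\eps_t>M\}$ (where $\beta^\eps$ saturates at $\beta^\infty+O(\eps M)$ and the flux points \emph{out} of the spikes, never into them), I obtain the subsolution inequality in the limit $\eps\downarrow 0$, then $M\uparrow\infty$.

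\textbf{Step 2 (initial inequality).} The estimate $\lim_{t\downarrow 0}\rho_t^\perp\le\rho_0^\perp$ weakly follows from the $W_2$-continuity of $t\mapsto\rho_t$ at $t=0$ together with a lower semicontinuity argument: testing $u_t\Leb 1$ against any nonnegative $\psi\in C_{\mathrm c}(\R)$ and applying Fatou to the pointwise convergence of $u_t$ on $\Dom{u_0}$ gives $\liminf_{t\downarrow 0}\int\psi\, u_t\,\d x\ge \int\psi\, u_0\,\d x$; combined with $\int\psi\,\d\rho_t\to\int\psi\,\d\rho_0$ and $\rho_t=u_t\Leb 1+\rho_t^\perp$, this yields $\limsup_{t\downarrow 0}\int\psi\,\d\rho_t^\perp\le\int\psi\,\d\rho_0^\perp$.

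\textbf{Step 3 (characteristics).} Granted the distributional inequality with the initial condition of Step~2, I would apply it to test functions of the form $\varphi(t,x)=\eta(t)\psi(\sfX_t^{-1}(x))$ with $\eta\in C_{\mathrm c}^\infty([0,+\infty))$, $\eta\ge 0$, and $\psi\in C_{\mathrm c}(\R)$, $\psi\ge 0$. A direct computation, exploiting $\frac{\d}{\d t}\sfX_t^{-1}(x)=V'(x)\,\partial_x\sfX_t^{-1}(x)$, shows
\begin{equation*}
\partial_t\varphi-V'(x)\partial_x\varphi=\dot\eta(t)\,\psi(\sfX_t^{-1}(x)),
\end{equation*}
so that, setting $F(t):=\int\psi\,\d\big((\sfX_t^{-1})_\#\rho_t^\perp\big)$, the distributional inequality reduces to $\int_0^{+\infty}\dot\eta(t)F(t)\,\d t\ge 0$ for every $\eta\ge 0$, i.e.\ $F$ is nonincreasing. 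The initial inequality of Step~2 yields $F(0^+)\le\int\psi\,\d\rho_0^\perp$, hence $(\sfX_t^{-1})_\#\rho_t^\perp\le\rho_0^\perp$, which is exactly \eqref{eq:23} and the second inequality in \eqref{eq:22}. The inclusion $\CDom{u_t}\subset\mathsf J_t$ then follows by combining this mass comparison with the parabolic regularity of the viscous approximations outside the transported singular set: where the initial density is finite, the regularized densities $u^\eps_t$ remain locally bounded (uniformly in $\eps$) on sets obtained by transporting $\Dom{u_0}$ via $\sfX_t$, so the limit density $u_t$ stays finite on $\mathsf D_t$.

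\textbf{Main obstacle.} The crux is Step~1: justifying that, in the $\eps\downarrow 0$ limit, the diffusive flux $\int\partial_x\varphi\,\partial_x\beta^\eps(u^\eps_t)\,\d x$ contributes with a nonpositive sign on the concentration region. This is the precise analytic incarnation of the physical fact that the sublinear diffusion $\beta$ can dissipate condensates but never create them; making it rigorous requires the boundedness of $\beta$, the finite Fisher dissipation \eqref{eq:3bis} along the flow, and a careful truncation argument to separate the bulk and the spike regimes before passing to the limit.
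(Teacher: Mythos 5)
Your logical skeleton — reduce \eqref{eq:68} to a one-sided inequality for the absolutely continuous part, handle the initial trace, and then use characteristics/comparison — is the right shape, and your Step~3 is essentially the paper's argument. However, Step~1 and Step~2 both have genuine gaps.

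\textbf{Step 1.} First, a sign slip: from \eqref{eq-debole} the inequality $\partial_t\rho^\perp-\partial_x(\rho^\perp V')\le 0$ is equivalent to $u_t$ being a \emph{supersolution}, i.e.\ to
\begin{equation*}
\int_0^{+\infty}\!\!\int_\R\big(-\partial_t\varphi+V'\partial_x\varphi\big)u_t\,\d x\,\d t+\int_0^{+\infty}\!\!\int_\R\partial_x\varphi\,\partial_x\beta(u_t)\,\d x\,\d t\;\ge\;0,
\end{equation*}
not $\le 0$. More seriously, the mechanism you describe for deriving this inequality (split the exact identity for $u^\eps_t$ over level sets $\{u^\eps_t\le M\}$ and $\{u^\eps_t>M\}$, then let $\eps\downarrow0$, $M\uparrow\infty$) is not made precise and, as stated, cannot work. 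The identity for $u^\eps_t$ passes to the limit $\eps\downarrow0$ directly by \eqref{eq:49} of Theorem~\ref{thm:lsc-dissipation} and recovers \eqref{eq-debole} with \emph{equality}; no one-sided information appears. A hard cutoff in level does not preserve the structure: $\beta^\eps(r)=\beta(r)+\eps r$ does not saturate (the $\eps r$ term is unbounded), and the restricted flux integral has boundary contributions that are not controlled. The heuristic ``the flux points out of the spikes'' is correct physics, but turning it into analysis is exactly the content of the paper's Theorem~\ref{thm:propagsing}: for a convex $\psi$ with $\psi(0)=0$, one has the pointwise inequality
\begin{equation*}
\partial_t\psi(u^\eps)-\partial_x\big(\partial_x\gamma^\eps(u^\eps)+\psi(u^\eps)(V^\eps)'\big)
=\eta(u^\eps)(V^\eps)''-(\beta^\eps)'(u^\eps)\psi''(u^\eps)(\partial_x u^\eps)^2
\le \eta(u^\eps)(V^\eps)'',
\end{equation*}
where the sign comes precisely from convexity, $\psi''\ge0$. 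One then takes the family $\psi_k$ of \emph{smooth} convex functions which vanish on $[0,k]$ and become asymptotically linear with slope $1$ (a smooth version of your level truncation), lets $\eps\downarrow0$ (Corollary~\ref{cor3}) and finally $k\uparrow\infty$, using $\psi_k(u)\to0$ pointwise while $\psi_{k,\infty}'\equiv1$ keeps the singular contribution. Without this entropy-inequality step, or an equivalent convexity argument, your Step~1 does not produce an inequality.

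\textbf{Step 2.} Your Fatou argument hinges on pointwise convergence $u_t\to u_0$ on $\Dom{u_0}$ as $t\downarrow0$. This is \emph{not} available under the hypotheses of the theorem: the regularizing estimate \eqref{eq:18} gives $\cI(\rho_t)\lesssim 1/\sfE_\lambda(t)^2$, which blows up as $t\downarrow0$, and $\cI(\rho_0)$ need not be finite. Theorem~\ref{thm:lsc-dissipation2}, which would give you locally uniform convergence of densities, requires a uniform Fisher bound along the family, and that fails near $t=0$ in general. Moreover, even $W_2$-continuity of $t\mapsto\rho_t$ does not make $\rho\mapsto u$ lower semicontinuous (e.g.\ $n\mathbf{1}_{[0,1/n]}\Leb1\to\delta_0$). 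In the paper, the initial trace of the transport inequality is obtained not from an abstract lower semicontinuity of the density, but from the fact that the viscous entropy inequality \eqref{eq:59} already carries the initial term $\int\psi(u^\eps(\cdot,0))\phi(\cdot,0)\,\dx$ and \eqref{eq:65} converts this in the $\eps\downarrow0$ limit into $\int\psi(u_0)\phi(\cdot,0)\,\dx+\psi_\infty'\int\phi(\cdot,0)\,\d\rho_0^\perp$.

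In short: the missing ingredient is the convex-entropy sub/supersolution inequality with the truncated $\psi_k$'s. Your Step~3 (comparison along characteristics of $-V'$) is fine once \eqref{eq:68} with its initial trace is in hand, and matches the paper's use of Prop.~8.1.7 of \cite{ags}.
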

The proof of Theorem \ref{thm:main3} will be carried out in Section \ref{sec:estimates}.

The case when $\urho_0^\perp=\sum_{j=1}^N\alpha_j\delta_{x_j}$ with $x_1<x_2<\cdots<x_N$ and $\alpha_j>0$ is of particular interest. In this case,
from Theorem \ref{thm:main3} we deduce that $\urho_t=u_t\Leb1+\urho_t^\perp$ with
\begin{equation}
  \label{eq:24}
  \urho_t^\perp=\sum_{j=1}^N \alpha_j(t)\delta_{x_j(t)},\quad
  x_j(t)=\sfX_t(x_j),
\end{equation}
where $\alpha_j:[0,+\infty)\to [0,+\infty)$ { is nonincreasing.}

  Theorem \ref{thm:main3} can be equivalently formulated in terms of
  the density $u_t$ of the regular part of $\rho_t$:
  \begin{corollary}[The regular part is a supersolution]
    If $\rho_t=u_t\Leb1+\rho_t^\perp\in \CPlusMeasuresTwo\R$ is a
    Wasserstein solution to \eqref{eq:rho} then $u_t$ is a
    supersolution of \eqref{eq:rho}, i.e.
    \begin{equation}
      \label{eq:71}
      \partial_t
      u-\partial_x\big(\partial_x\beta(u)+V'\,u\big)\ge0\quad\text{in
        the sense of distributions in }(0,+\infty)\times\R.
    \end{equation}
  \end{corollary}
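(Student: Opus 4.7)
The plan is essentially to subtract the distributional inequality \eqref{eq:68} for the singular part $\rho_t^\perp$, provided by Theorem \ref{thm:main3}, from the distributional identity satisfied by the full Wasserstein solution $\rho_t$. The weak formulation \eqref{eq-debole} in Definition \ref{def:measuresolution} can be rewritten as
\[
\partial_t \rho_t - \partial_x\bigl(\partial_x \beta(u_t) + V'\,\rho_t\bigr) = 0 \qquad\text{in } \mathscr{D}'((0,+\infty)\times\R),
\]
where I use that the test functions $\varphi\in C^\infty_{\rm c}((0,+\infty)\times\R)$ are the same ones appearing in \eqref{eq:68}.

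Next I would decompose $\rho_t = u_t\,\Leb 1 + \rho_t^\perp$ inside the distributional identity and isolate the regular part, obtaining
\[
\partial_t u_t - \partial_x\bigl(\partial_x \beta(u_t) + V'\,u_t\bigr)
= -\Bigl[\partial_t \rho_t^\perp - \partial_x\bigl(V'\,\rho_t^\perp\bigr)\Bigr] \qquad \text{in }\mathscr{D}'((0,+\infty)\times\R).
\]
By \eqref{eq:68} the bracket on the right hand side is a nonpositive distribution on $(0,+\infty)\times\R$, hence its opposite is nonnegative, yielding exactly \eqref{eq:71}.

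I expect no serious obstacle here, since the work has all been done in Theorem \ref{thm:main3}: the only thing to double-check is that the linear terms $V'\,\rho_t$ and $V'\,\rho_t^\perp$, $V'\,u_t$ are all well-defined as distributions and split additively, which follows from $V'\in C^1(\R)$ together with $\rho_t\in\PlusMeasuresTwo{\R,\mass}$, so that the products are locally finite Borel measures. Testing against an arbitrary nonnegative $\varphi\in C^\infty_{\rm c}((0,+\infty)\times\R)$ then makes the sign of the inequality unambiguous, and no further regularity of $u_t$ is needed beyond what is already guaranteed by \eqref{eq:3}--\eqref{eq:3bis}.
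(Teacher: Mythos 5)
Your proof is correct and is exactly the argument the paper has in mind: the Corollary is presented as an ``equivalent formulation'' of Theorem \ref{thm:main3}, which is to say it follows by subtracting the inequality \eqref{eq:68} for $\rho_t^\perp$ from the distributional equation \eqref{eq-debole} for $\rho_t=u_t\Leb1+\rho_t^\perp$, precisely as you do. The sanity checks you record (that $V'\rho_t$, $V'u_t\Leb1$, $V'\rho_t^\perp$ are locally finite measures and split additively) are the only points worth verifying, and they are as routine as you say.
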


\subsection{Minimizers of the energy functional and critical mass.}
\label{subsec:minimizers}

  In this section we will assume that the potential $V$ satisfies the
  coercivity condition
  \begin{equation}
    \label{eq:95}
    \lim_{|x|\to+\infty}V(x)=+\infty
    \quad \text{and we set}\quad
    V_{\rm min}:= \min_\R V,\quad
    Q:=\big\{x\in \R:V(x)=V_{\rm min}\big\},
    \tag{2.coer}
  \end{equation}
and we study the minimizers of the functional $\cF$,
which are particular steady states of equation \eqref{eq:rho}.
%
%
  The structure of the minimizers of $\cF$ is governed by two critical constants
  and two functions, with their inverses.
  The first function is $r\mapsto -E'(r)$: it is a decreasing
  homeomorphism between $(0,+\infty)$ and the interval $(0,\dV)$, which
  can be characterized by
  the constant
  \begin{displaymath}
    \dV:=-\lim_{x\to 0^+}E'(x)=\int_0^{+\infty}\frac{\beta'(s)}s\,\d
    s\in (0,+\infty].
  \end{displaymath}
  Notice that $\dV$ is finite if and only if $s\mapsto \beta'(s)/s$ is integrable
  in a right neighborhood of $0$.
  We can thus consider
  the pseudo-inverse function
  $H:(0,+\infty)\to [0,+\infty)$ defined by
  $$
  H(v)=\begin{cases}(E')^{-1}(-v)& \text{if }v\in(0,\dV)\\
    0& \text{if $\dV<+\infty$ and }v\in [\dV,+\infty)
  \end{cases}
$$
which is decreasing in the interval $(0,\dV)$.

The second function is
\begin{displaymath}
  M_\R(v):=\int_\R H(V(x)-v)\,\d x,\quad
  v\le V_{\rm min}.
\end{displaymath}
In order to avoid a degenerate situation, we will assume that $V$
satisfies the integrability condition
\begin{equation}
  \label{eq:96}
  \int_{\R\setminus \tilde Q}H(V(x)-V_{\rm min})\,\d
  x<+\infty,\quad\text{for some bounded open neighborhood $\tilde Q$
    of $Q$}.
  \tag{2.int}
\end{equation}
\eqref{eq:96} yields $M_\R(v)<+\infty$ for every $v<V_{\rm min}$
so that
$M_\R$ is an increasing homeomorphism between $(V_{\rm min}-\dV,V_{\rm
  min})$ and the interval $(0,\mass_{\rm c})$,
where the critical mass $\mass_{\rm c}$ is defined by
\begin{equation}
  \label{eq:28}
  \mass_{\rmc}:=\lim_{v\uparrow V_{\rm min}}M_\R(v)=
  \int_\R H(V(x)-V_{\rm min})\,\d x\in (0,+\infty].
\end{equation}
If $M^{-1}:(0,\frak m_\rmc)\to (V_{\rm min}-\dV,V_{\rm min})$ denotes the
inverse map of $M$, we eventually set
\begin{equation}\label{eq:31}
  \frak v:=\begin{cases}
    M_\R^{-1}(\mass) & \text{if } \mass < \mass_\rmc \\
    V_{\rm min} & \text{if } \mass\geq \mass_\rmc.
    \end{cases}
\end{equation}
\begin{theorem}[Characterization of minimizers]
  \label{thm:main4}
  If $V$ satisfies \eqref{eq:95} then
  $\cF$ attains its minimum on $\PlusMeasures{\R,\mass}$.
  If  $V$ also satisfies \eqref{eq:96} then
  a measure $\rho\in \PlusMeasures{\R,\mass}$ is a minimizer of
  $\cF$ if and only if it belongs to $\CPlusMeasures{\R,\mass}$ and
  its decomposition $\urho_{\rm min}=u_{\rm min}\Leb 1+\urho_{\rm
    min}^\perp$ satisfies
 \begin{equation}\label{eq:30}
    \begin{aligned}
      u_{\rm min}(x)=H(V(x)-\frak v),\quad
      \urho_{\rm min}^\perp(\R\setminus Q)=0,\quad
    \urho_{\rm min}^\perp (Q)=(\frak m - \frak m_\rmc)^+.
  \end{aligned}
  \end{equation}
  $\rho_{\rm min}$ belongs to $\PlusMeasuresTwo{\R,\mass}$ if $V$
  satisfies
  the condition (stronger than \eqref{eq:96})
  \begin{equation}
    \label{eq:97}
    \int_{\R\setminus \tilde Q}|x|^2\,H(V(x)-V_{\rm min})\,\d
  x<+\infty,\quad\text{for some bounded open neighborhood $\tilde Q$ of $Q$}.
  \end{equation}

\end{theorem}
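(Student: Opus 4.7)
My plan is to handle the four assertions sequentially: (1) existence by the direct method; (2) first-order optimality via the convexity of $\cF$ along linear interpolations of measures; (3) the explicit form of the minimizer through the mass constraint and the critical mass dichotomy; (4) the moment bound under \eqref{eq:97}.

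For (1) I first note that $|E(r)|\le \min(\betainfty,c\,r)$ for some $c>0$ (from $E(0)=0$, $E'<0$ and \eqref{eq:12}), which gives $|\cE(\urho)|\le C\mass$ uniformly on $\PlusMeasures{\R,\mass}$. After subtracting a constant from $V$ I may assume $V\ge 0$, so for a minimizing sequence $(\urho_n)$ the potential energies $\cV(\urho_n)$ stay bounded and the coercivity \eqref{eq:95} gives $\urho_n(\{|x|>R\})\le \cV(\urho_n)/\inf_{|x|>R}V\to 0$, yielding tightness. Lower semicontinuity of $\cV$ under weak convergence of measures is immediate; that of $\cE$ is the classical result of \cite{DT84} for convex integrands with vanishing recession function (here $E(r)/r\to 0$). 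Coercivity rules out loss of mass at infinity, so the weak limit is a minimizer in $\PlusMeasures{\R,\mass}$.

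For (2), since $\cV$ is affine in $\urho$ and $\cE$ is convex, $\cF$ is convex along $t\mapsto (1-t)\urho_{\rm min}+t\sigma$. Differentiating at $t=0^+$ with $\sigma=v\Leb 1+\sigma^\perp\in\PlusMeasures{\R,\mass}$ gives the variational inequality
\[
\int_\R E'(u_{\rm min})(v-u_{\rm min})\,\d x+\int_\R V\,\d(\sigma-\urho_{\rm min})\ge 0.
\]
Specializing to three families of small mass-preserving perturbations (transferring mass between two points of $\Domp{u_{\rm min}}$; between $\Domp{u_{\rm min}}$ and a point where $u_{\rm min}=0$; between $\urho_{\rm min}^\perp$ and an a.c.\ region), one produces a Lagrange multiplier $c\in\R$ such that $E'(u_{\rm min})+V=c$ on $\Domp{u_{\rm min}}$, $V-\dV\ge c$ on $\{u_{\rm min}=0\}$, and, whenever $\urho_{\rm min}^\perp\ne 0$, $\supp \urho_{\rm min}^\perp\subset Q$ together with $c=V_{\rm min}$. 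Using $(E')^{-1}(-v)=H(v)$ and the extension $H(0^+)=+\infty$, these conditions are equivalent to $u_{\rm min}(x)=H(V(x)-c)$ on all of $\R$, and the continuity of $H$ and $V$ promotes $u_{\rm min}$ to a generalized continuous density, so $\urho_{\rm min}\in\CPlusMeasures\R$.

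For (3), imposing $\urho_{\rm min}(\R)=\mass$ produces the dichotomy: in the subcritical case $\mass\le\mass_{\rm c}$ one has $\urho_{\rm min}^\perp=0$ and $c=\frak v=M_\R^{-1}(\mass)$; in the supercritical case $c=V_{\rm min}=\frak v$ and $\urho_{\rm min}^\perp(Q)=\mass-\mass_{\rm c}\ge 0$. Conversely, any measure of the form \eqref{eq:30} satisfies the variational inequality above and, by convexity of $\cF$, is a minimizer. Finally, (4) is a direct check: $Q$ is compact by coercivity of $V$, $\urho_{\rm min}^\perp$ is compactly supported, and splitting $\int_\R |x|^2 H(V-\frak v)\,\d x$ over $\tilde Q$ and its complement bounds the first piece by continuity of the integrand on $\tilde Q$ and the second by \eqref{eq:97}. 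The main technical difficulty lies in Step~(2): rigorously engineering admissible mass-preserving perturbations when $u_{\rm min}$ can vanish or blow up and $\dV$ may be infinite, and showing that the multiplier extracted from a.c.\ perturbations must equal $V_{\rm min}$ as soon as a nontrivial singular part is present.
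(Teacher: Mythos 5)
Your plan is a legitimate strategy, but it takes a genuinely different route from the paper and leaves the hardest step essentially unresolved. The paper does \emph{not} derive first-order (Euler--Lagrange) conditions at all. Instead it takes a candidate $\urho_{\rm min}$ of the form \eqref{eq:30}, splits $\R$ into $A=\{V-\frak v<\dV\}$ and $B=\R\setminus A$, and for an arbitrary competitor $\urho=u\Leb1+\urho^\perp$ writes the single convexity inequality $\cE(\urho)-\cE(\urho_{\rm min})\ge\int E'(u_{\rm min})(u-u_{\rm min})\,\d x$, computes $E'(u_{\rm min})$ explicitly on $A$ and $B$ from $E'(H(v))=-v$, and combines with $\cV$ and the mass constraint $\rho(\R)=\rho_{\rm min}(\R)$ to conclude $\cF(\urho)\ge\cF(\urho_{\rm min})$ in a few lines. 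Necessity then follows almost for free: any other minimizer forces equality in these inequalities, which pins the singular part to $Q$ and (by strict convexity of $E$ and continuity of $u$) forces $u=u_{\rm min}$. This sidesteps entirely the issue of admissible variations.

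By contrast, your Step~(2) is where the real work is, and you have only sketched it. Two concrete issues. First, the variational inequality $\int E'(u_{\rm min})(v-u_{\rm min})\,\d x+\int V\,\d(\sigma-\urho_{\rm min})\ge0$ obtained by differentiating along $t\mapsto(1-t)\urho_{\rm min}+t\sigma$ requires justifying the interchange of derivative and integral at $t=0^+$, and $E'$ can blow up at $0$ (namely when $\dV=+\infty$, $E'(0^+)=-\infty$). What saves you, and what you should observe, is that these two dangers cannot coexist at a minimizer: when $\dV=+\infty$ one has $H>0$ everywhere, so $u_{\rm min}>0$ a.e.; when $u_{\rm min}$ vanishes somewhere, $\dV<+\infty$, so $E'(0^+)=-\dV$ is finite. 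Without this observation the ``three families of mass-preserving perturbations'' argument does not close. Second, extracting a \emph{single} Lagrange multiplier $c$ valid on all of $\Domp{u_{\rm min}}$ (which may have many connected components), and showing it equals $V_{\rm min}$ whenever $\urho_{\rm min}^\perp\neq0$, requires ruling out the scenario that Example~\ref{ex:2} illustrates for the weaker condition $\cI=0$: a measure can be locally optimal on each component without being a global minimizer. You must globalize the multiplier by moving mass between distant components, which you have not done. Finally, a minor but genuine inaccuracy in Step~(1): the bound $|E(r)|\le c\,r$ fails near $r=0$ when $\dV=+\infty$ (there $E(r)\sim\beta'(0)\,r\log r$, so $E(r)/r\to-\infty$); use the two-sided bound $-\betainfty\le E\le0$ together with the second-moment control from coercivity, as in Proposition~\ref{prop:lscF}.
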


\begin{remark}
  \label{rem:examples}
\
\begin{itemize}
\item
  In the case when $\frak m \le \frak m_{\rmc},$
  the minimizer $\rho_{\rm min}=u_{\rm min}\Leb 1$
  is unique and $\rho_{\rm min}^\perp=0$.
  If $\mass<\mass_{\rm c}$, $u_{\rm min}$ is bounded, whereas
  if $\mass=\mass_{\rm c}$, $u_{\rm min}(x)=+\infty$ for every $x\in Q$.
  Last, if $\frak m > \frak m_{\rmc}$
  the minimizer has a nontrivial singular part and
  it is unique only when $Q$ is a singleton.
\item
As already pointed out, the existence of the critical mass $\mass_{\rm
  c}<+\infty$
depends on the behavior of the $\beta(r)$ for large values of $r$
{and
on the local behaviour of $V$ near $Q$}.
\item
  {If $\dV<+\infty$ 
    then
  the support of $\urho_{\rm min}$ is compact and it is contained in
  the sublevel of $V$
  $\{ x\in\R:V(x)\le \frak v+\dV\}$.}
\item {If $Q$ is an interval (in particular if $V$ is convex) then the minimizer of $\cF$ is unique.
    This property is always true when $\mass_\rmc=+\infty$;
    when $\mass_\rmc<+\infty$, the fact that $Q$ is a closed interval
    and \eqref{eq:28} show that $Q$ is a singleton.}
\end{itemize}
\end{remark}

\subsection{Stationary solutions}
\label{subsec:stationary}

  In this section we will study the stationary Wasserstein solutions
  of \eqref{eq:rho},
  i.e.\ constant measures $\rho\in \PlusMeasuresTwo{\R}$ which
  solve \eqref{eq:rho}.
  As a starting point, we observe that steady states can be
  characterized as measures
  with vanishing Fisher dissipation.
  \begin{theorem}
    \label{thm:Fisher=0}
    A measure $\rho\in \PlusMeasuresTwo{\R,\mass}$ is a stationary
    Wasserstein
    solution of \eqref{eq:rho} iff $\rho\in
    \CPlusMeasuresTwo{\R,\mass}$ and
    $\cI(\rho)=0$.
  \end{theorem}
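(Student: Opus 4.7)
The plan is to establish the two implications separately, leveraging the machinery already packaged in Theorem \ref{thm:main1}.

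For the forward direction, assume $\rho\in\PlusMeasuresTwo{\R,\mass}$ is a stationary Wasserstein solution, so that the constant curve $\urho_t\equiv\rho$ satisfies Definition \ref{def:measuresolution}. The regularity condition \eqref{eq:3} immediately gives $\rho\in\CPlusMeasuresTwo{\R,\mass}$, while \eqref{eq:3bis} forces $\cI(\rho)<+\infty$. By uniqueness (Theorem \ref{thm:main1}), this constant curve coincides with the Wasserstein solution issued by $\rho$, hence satisfies both the regularization estimate \eqref{eq:18} (from which $\cF(\rho)<+\infty$) and the energy dissipation identity \eqref{eq:13}. Plugging $\urho_t\equiv\rho$ into \eqref{eq:13} yields $(t_1-t_0)\,\cI(\rho)=\cF(\rho)-\cF(\rho)=0$ for all $0\le t_0<t_1$, hence $\cI(\rho)=0$.

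For the converse, suppose $\rho\in\CPlusMeasuresTwo{\R,\mass}$ satisfies $\cI(\rho)=0$. I would verify directly that $\urho_t\equiv\rho$ meets the three requirements of Definition \ref{def:measuresolution}: continuity in $\PlusMeasuresTwo{\R,\mass}$ and condition \eqref{eq:3} are immediate, and $\int_{T_0}^{T_1}\cI(\rho)\,\d t=0$ gives \eqref{eq:3bis}. The heart of the argument is the distributional equation \eqref{eq-debole}. From the vanishing of the two nonnegative summands in definition \eqref{eq:5} of $\cI$, I extract the pointwise identities
\[
\partial_x\beta(u)+uV'=0\quad\Leb1\text{-a.e.\ on }\Domp u,\qquad
V'=0\quad\rho^\perp\text{-a.e.\ on }\R.
\]
For a test function $\varphi\in C_{\rm c}^\infty((0,+\infty)\times\R)$ the time-derivative term of \eqref{eq-debole} vanishes by Fubini (since $\rho$ is time-independent); splitting $\rho=u\Leb1+\rho^\perp$, using that $\beta\circ u\equiv 0$ on the open set $\{u=0\}\cap\Dom u$, and recalling that $\R\setminus\Dom u$ is $\Leb1$-negligible, the spatial integrals collapse to
\[
\int_{\Domp u}\partial_x\varphi\,\bigl(uV'+\partial_x\beta(u)\bigr)\,\d x,
\]
which is zero by the first identity above.

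The main (mild) obstacle is the domain bookkeeping in the converse direction: one needs $\beta\circ u\in W^{1,1}_{\rm loc}(\R)$, which is built into the finiteness of $\cI$, to make sense of $\partial_x\beta(u)$ as an $L^1_{\rm loc}$ function, and one must simultaneously handle the Lebesgue-null set $\R\setminus\Dom u$ (where $u=+\infty$), the region $\{u=0\}\cap\Dom u$ (where $\beta(u)\equiv 0$ and hence $\partial_x\beta(u)=0$ $\Leb1$-a.e.), and the support of $\rho^\perp$ (where $V'=0$). Once this decomposition is in place the proof is essentially formal, and both implications reduce to short consequences of Theorem \ref{thm:main1} and the explicit form of $\cI$.
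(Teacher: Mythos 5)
Your proof is correct but follows a noticeably different route from the paper's. The paper dispatches the theorem in two lines by invoking the general gradient-flow theory: citing \cite[Theorem~11.1.3]{ags} to identify stationary solutions with measures of zero metric slope $|\partial\cF|(\rho)=0$, and then Theorem~\ref{th:charsubdiff}, which says $|\partial\cF|^2(\rho)=\cI(\rho)$ and that finiteness of the slope forces $\rho\in\CPlusMeasuresTwo{\R,\mass}$. You instead argue entirely inside Definition~\ref{def:measuresolution}: for the forward direction you use uniqueness from Theorem~\ref{thm:main1} to transfer the regularization estimate \eqref{eq:18} (which gives $\cF(\rho)<+\infty$) and the dissipation identity \eqref{eq:13} to the constant curve, collapsing to $(t_1-t_0)\cI(\rho)=0$; for the converse you verify \eqref{eq-debole} by hand, splitting $\Leb1$-a.e.\ into $\Domp u$ (where the first vanishing summand of \eqref{eq:5} forces $\partial_x\beta(u)+uV'=0$), $\{u=0\}\cap\Dom u$ (where $\beta\circ u\equiv 0$ and the Sobolev fact that the weak derivative vanishes a.e.\ on a level set kills the term), and the $\Leb1$-null set $\R\setminus\Dom u$, plus the singular contribution killed by $V'=0$ $\rho^\perp$-a.e. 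What the paper's route buys is brevity and a conceptual placement of the result inside the slope calculus developed in \S\ref{sec:Wass}; what your route buys is self-containment — it avoids the abstract citation and makes the role of each piece of $\cI$ in the distributional identity \eqref{eq-debole} transparent, at the cost of the extra Sobolev bookkeeping you correctly flagged. Both are sound.
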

  Of course, any minimizer $\rho$ of $\cF$ in
  $\PlusMeasuresTwo{\R,\mass}$ satisfies $\cI(\rho)=0$ and it is a
  stationary solution,
  but in general one can expect that other stationary solutions exist.
  Their structure depends in a crucial way on
  $\dV$; the simplest case is when $\dV=+\infty$.
  \begin{theorem}[Characterization of stationary measures I]
    \label{thm:statI}
    Let us suppose that $V$ satisfies
    \eqref{eq:95} and \eqref{eq:96}.
    If $\dV=+\infty$ then for every $\rho\in \PlusMeasures{\R,\mass}$
      \begin{equation}\label{nulldissipation}
        \text{$\cI(\urho)=0 \quad
          \Leftrightarrow\quad \urho$ is a minimizer for
          $\cF$ in $\PlusMeasures{\R,\mass}$.}
  \end{equation}
  In particular, a measure $\rho\in
  \PlusMeasuresTwo{\R,\mass}$ is
  a stationary solution if and only if it is a minimizer of $\cF$.
  \end{theorem}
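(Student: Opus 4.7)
I will prove both implications separately, using Theorem~\ref{thm:main4} for the structure of minimizers and the explicit form of $\cI$ in \eqref{eq:5}.

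For minimizer $\Rightarrow \cI(\rho)=0$: by Theorem~\ref{thm:main4}, a minimizer decomposes as $\rho_{\min}=u_{\min}\Leb1+\rho^\perp_{\min}$ with $u_{\min}=H(V-\mathfrak{v})$ and $\supp\rho^\perp_{\min}\subset Q$. The first relation is equivalent to $E'(u_{\min})+V\equiv\mathfrak{v}$, so using $\beta'(r)=rE''(r)$ one has $\partial_x\beta(u_{\min})/u_{\min}=\partial_xE'(u_{\min})=-V'$ on $\Domp{u_{\min}}$, killing the first integrand of $\cI$. The coercivity \eqref{eq:95} makes $Q$ a compact set of interior critical points of $V$, so $V'\equiv0$ on $Q$ and the second integrand of $\cI$ also vanishes.

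For the converse, assume $\cI(\rho)=0$. By the very definition of $\cI$, this means $\rho\in\CPlusMeasures{\R,\mass}$ with density $u$ satisfying $\beta(u)\in W^{1,1}_{\mathrm{loc}}(\R)$, together with
\[
\frac{\partial_x\beta(u)}{u}+V'=0 \text{ a.e.\ on } \Domp u, \qquad V'\equiv0 \text{ on }\supp\rho^\perp.
\]
Rewriting as $\partial_x(E'(u)+V)=0$, I deduce $u=H(V-c_I)$ on each component $I$ of $\Domp u$, with $V>c_I$ on $I$ and $V(b)=c_I$ at any finite boundary $b\in\CDom u$. The assumption $\dV=+\infty$ then rules out $\{u=0\}$: a sequence $x_n\in\Domp u$ with $u(x_n)\to0$ would force $E'(u(x_n))\to-\dV=-\infty$, hence $c_{I_n}\to-\infty$, but $V\geq V_{\min}$ at the boundary of $I_n$ gives a contradiction. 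Since $\CPlusMeasures{\R}$ excludes nonzero purely singular measures, $u\not\equiv0$, so the alternative ``$\{u=0\}$ open-and-closed equal to $\R$'' is impossible; thus $\{u=0\}=\emptyset$ and $\Dom u=\Domp u$ is dense of full measure. Extending $w:=E'(u)+V$ continuously to $\R$ via $E'(+\infty):=0$ (justified by \eqref{eq:121}), one has $w\equiv c_I$ on each component of $\Dom u$, $w\equiv V$ on $\CDom u$, and $w$ globally continuous. At any shared boundary $b\in\CDom u$ of two adjacent components, continuity of $w$ gives $c_I=V(b)=c_{I'}$; since $\Leb1(\CDom u)=0$ prevents positive-length gaps, propagation along chains of adjacent components yields a single constant $w\equiv c$. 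Then $u=H(V-c)$ on $\Dom u$ and $V\equiv c$ on $\CDom u$ force $c\leq V_{\min}$. A case analysis on $c<V_{\min}$ (giving $\CDom u=\emptyset$, $\rho^\perp=0$, and $c=\mathfrak{v}=M_\R^{-1}(\mass)$ with $\mass<\mass_{\rm c}$) versus $c=V_{\min}=\mathfrak{v}$ (giving $\CDom u\subset Q$, $\supp\rho^\perp\subset Q$, and $\rho^\perp(Q)=\mass-\mass_{\rm c}\geq0$) shows $\rho$ has exactly the structure of Theorem~\ref{thm:main4}, hence is a minimizer.

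The main obstacle is the identification of the global constant $w\equiv c$ when $\CDom u$ is only known to be a closed Lebesgue-null set, potentially with complicated (e.g.\ Cantor-like) topology in which $\Dom u$ has infinitely many components not linked by direct adjacency. The cleanest remedy is to upgrade the pointwise relation $\partial_x w=0$ on $\Dom u$ to a distributional statement on all of $\R$: the PDE directly gives it on $\Dom u$, and $\Leb1(\CDom u)=0$ combined with $\supp\rho^\perp\subset\{V'=0\}$ (which controls how $V$, and hence $w=V$ on $\CDom u$, may vary on the ``essential'' part of $\CDom u$) ensures no singular contribution to $\partial_x w$; together with continuity of $w$ this yields $w\in W^{1,1}_{\mathrm{loc}}(\R)$ with vanishing weak derivative, hence constant on the connected line $\R$.
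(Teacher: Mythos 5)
Your ``$\Leftarrow$'' direction is fine and is essentially the computation the paper takes for granted. The trouble is in ``$\Rightarrow$'', and it is exactly at the point you yourself flag: identifying a \emph{single} global constant $c$ when the constants $c_I$ are only known on the components of $\Domp u$, and $\CDom u$ is merely a closed Lebesgue-null set. Your proposed remedy does not close the gap. The assertion that ``$\Leb1(\CDom u)=0$ combined with $\supp\rho^\perp\subset\{V'=0\}$ ensures no singular contribution to $\partial_x w$'' is precisely the Cantor-function obstruction you named, and it is not resolved by those two facts: a continuous function that is locally constant on a dense open set of full measure and agrees with $V$ on the residual closed set can still carry a nontrivial singular derivative (a devil's staircase), and $V'=0$ is known only on $\supp\rho^\perp$, which a priori may be a proper subset of $\CDom u$. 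Concluding $w\in W^{1,1}_{\loc}(\R)$ requires an actual argument, not an appeal to the measure-zero size of $\CDom u$.

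The paper avoids this entirely by working with $\Pos u$ rather than $\Domp u$, and by using the chain-rule step that is the real content of Lemma~\ref{le:char}: write $E'\circ u = L\circ(\beta\circ u)$ with $L:=E'\circ\beta^{-1}$, note that $\beta\circ u\in W^{1,1}_{\loc}(\R)$ (this is part of $\cI(\rho)<+\infty$) and, crucially, that $L$ extends to a $C^1$ map on $(0,\betainfty]$ with $L(\betainfty)=0$ and $L'(\betainfty)=0$. Thus $\{u=+\infty\}$ is a harmless set: $E'\circ u\in W^{1,1}_{\loc}(\Pos u)$, because the chain rule does not degenerate there. The only dangerous set is $\{u=0\}$, because $L'(r)=1/\beta^{-1}(r)\uparrow+\infty$ as $r\downarrow0$ when $\dV=+\infty$. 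The paper's contradiction argument (a finite boundary point $a$ of a component of $\Pos u$ must have $u(a)=0$, forcing $E'(u(x))\to-\dV=-\infty$ as $x\downarrow a$ while $E'(u)+V=c_I$ is finite) then shows $\Pos u=\R$. Once $\Pos u=\R$, one has $E'\circ u\in W^{1,1}_{\loc}(\R)$ globally, $\partial_x(E'(u)+V)=0$ a.e., and hence a single constant with no stitching across $\CDom u$ at all. In short: the functional-analytic fact that makes the argument go is ``$L\in C^1$ up to $\betainfty$, so $\CDom u$ is invisible in the chain rule once it lies inside $\Pos u$''; your proof never invokes it, and without it the step from ``locally constant'' to ``constant'' is genuinely missing.
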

  The case when $\dV<+\infty$ is more complicated and requires some
  preliminary definition.
  \begin{definition}
    \label{def:adm_interval}
    Let us suppose that $\dV<+\infty$.
    We say that a bounded open interval $I=(a,b)\subset\R$ is
    an admissible local sublevel of $V$ if
    \begin{equation}
      \label{eq:98}
      V(a)=V(b),\quad
      \frak v_I:=V(a)-\dV\le V(x)< V(a)\quad\text{for every $x\in
        (a,b)$},
   \end{equation}
   and
   \begin{equation}
     \label{eq:99}
     M_I:=\int_a^b H\big(V(x)-\frak v_I\big)\,\d x<+\infty.
   \end{equation}
   We set $Q_I:=\big\{x\in I
   :V(x)=\min_I V\big\}$.
\end{definition}
Notice that $Q_I$ is not empty iff
\begin{gather}
  \label{eq:98bis}
  \frak v_I=V(a)-\dV=\min_{I}V.
\end{gather}
If $Q_I$ is empty, i.e.\ $\frak v_I<\min_I V$,
then condition \eqref{eq:99} is always satisfied.

If $u:\R\to [0,+\infty]$ is a continuous map, we set
\begin{equation}
  \label{eq:104}
  \begin{aligned}
    \Pos u:=&\big\{x\in \R:u(x)>0\big\},\\
    \Conn u:=&\text{the
      collection of all the connected components of $\Pos u$.}
  \end{aligned}
\end{equation}
\begin{theorem}[Characterization of stationary measures II]
  \label{thm:main_stationary}
  Let us suppose that $V$ satisfies
  \eqref{eq:95} and \eqref{eq:96}.
  If $\dV<+\infty$ a measure
  $\urho=u\Leb 1+\rho^\perp\in \CPlusMeasures{\R,\mass}$ satisfies
  $\cI(\rho)=0$ 
  if and only if it
  satisfies the following
  three conditions:
  \begin{enumerate}
  \item
    All the connected components in $\Conn u$ of the open set
    $\Pos u$ are admissible local
    sublevels of $V$
    according to Definition \ref{def:adm_interval}.
    \item
      \begin{equation}
      \label{eq:103}
      u\restr{I}=H(V(x)-\frak v_I)\quad\text{for every }I\in \Conn u.
    \end{equation}
    \item
      If $Q(u):=\bigcup_{I\in
        \Conn u}Q_I$
    \begin{equation}
      \label{eq:84}
      \text{$\rho^\perp$ is concentrated on $Q(u)$,
        and}
      \quad\frak m=\sum_{I\in \Conn u}M_I
      +\rho^\perp(\R).
    \end{equation}
  \end{enumerate}
\end{theorem}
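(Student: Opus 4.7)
The plan is to prove the two implications separately, exploiting the fact that $\cI(\rho)$ is the sum of two nonnegative terms whose vanishing can be analyzed independently.  The forward direction will be a direct computation starting from conditions (1)--(3); the main work lies in the converse, which reduces to integrating a first-order ODE on each connected component of $\Pos u$ and then reading off the admissibility conditions at the boundary.

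For the easy direction $(\Leftarrow)$, I assume (1)--(3).  On each $I\in\Conn u$, condition (2) gives $E'(u(x))=\frak v_I-V(x)$ on $I$; differentiating and using the identity $\beta'(r)=rE''(r)$ from \eqref{eq:121} yields $\partial_x\beta(u)/u+V'\equiv 0$ on $I\cap\Domp u$, so the first integral in \eqref{eq:5} vanishes.  By (3), $\rho^\perp$ is concentrated on $Q(u)\subset\bigcup_{I\in\Conn u}\Int I$, and each $x\in Q_I$ is an interior minimum of $V$ (cf.\ \eqref{eq:98bis}), so $V'(x)=0$ there and the second integral also vanishes, whence $\cI(\rho)=0$.

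For the main direction $(\Rightarrow)$, I assume $\cI(\rho)=0$.  Both summands of \eqref{eq:5} vanish, giving $\partial_x\beta(u)/u+V'=0$ $\Leb 1$-a.e.\ on $\Domp u$ and $V'=0$ $\rho^\perp$-a.e.  Since $\Pos u\subset\R$ is open, $\Conn u$ is a countable collection of disjoint open intervals.  On each $I\in\Conn u$, $u$ is strictly positive and $\beta\circ u\in W^{1,1}_{\rm loc}(I)$, so the first relation and $\beta'(r)=rE''(r)$ rewrite as $\partial_x\bigl(E'(u)+V\bigr)=0$ in $\DD'(I)$; hence $E'(u)+V\equiv \frak v_I$ on $I$ for a constant $\frak v_I$, which rearranges to $u=H(V-\frak v_I)$ and establishes (2).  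Next, I show each $I=(a,b)$ is admissible: coercivity \eqref{eq:95} forces $I$ to be bounded, because otherwise $V(x)<\frak v_I+\dV$ on an unbounded set would contradict $V(x)\to+\infty$; at the finite endpoints $u$ vanishes by maximality of $I$ in $\Pos u$, so $H(V(a)-\frak v_I)=H(V(b)-\frak v_I)=0$, and continuity together with $V(x)-\frak v_I\in(0,\dV)$ on $(a,b)$ forces $V(a)=V(b)=\frak v_I+\dV$, i.e.\ $\frak v_I=V(a)-\dV$ and $V(x)<V(a)$ on $(a,b)$, which is \eqref{eq:98}; finite total mass of $\rho$ gives $M_I=\int_I u\,\d x\le \mass<+\infty$, proving \eqref{eq:99} and hence (1).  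Finally, since $\rho\in\CPlusMeasures\R$, $\rho^\perp$ is concentrated on $\CDom u=\{u=+\infty\}$; outside $\Pos u$ one has $u=0$, and on each $I$ the identity $u=H(V-\frak v_I)$ together with $\lim_{v\downarrow 0}H(v)=+\infty$ gives $\CDom u\cap I=Q_I$, whence $\CDom u=Q(u)$ and (3) follows together with the Lebesgue decomposition $\mass=\sum_I M_I+\rho^\perp(\R)$.

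The most delicate step will be the rigorous derivation of $E'(u)+V\equiv\frak v_I$ on a whole component $I$, since $u$ may blow up at the interior points of $I$ where $V$ attains the value $\frak v_I$.  My plan is to carry out the computation first on the open subset $I\setminus Q_I$, where $u$ is locally bounded and $E'\circ u$ is classically $W^{1,1}_{\rm loc}$, and then to extend the identity $u=H(V-\frak v_I)$ across $Q_I$ by continuity, using that $H$ defines a continuous map from $[0,\dV]$ into $[0,+\infty]$ and that $u$ is continuous as a map to $[0,+\infty]$.  A secondary technical point, the matching of the constants on adjacent components, is automatic because distinct components of $\Pos u$ are separated by points where $u=0$, and on different components the constants $\frak v_I$ need not agree.
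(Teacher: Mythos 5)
The overall architecture of your argument is the same as the paper's: prove $(\Leftarrow)$ by a direct computation, and for $(\Rightarrow)$ derive the identity $E'(u)+V\equiv\frak v_I$ on each component $I$ of $\Pos u$, read off the admissibility conditions at the (finite) endpoints of $I$, and identify $\CDom u\cap I$ with $Q_I$. The paper packages the key identity as Lemma \ref{le:char} and the converse computation (\eqref{eq:64} and \eqref{eq:100}), so the two proofs are structurally the same.

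There is, however, a genuine gap in the step you yourself flag as the ``most delicate'': your plan to compute on $I\setminus Q_I$ and then ``extend the identity $u=H(V-\frak v_I)$ across $Q_I$ by continuity.'' That plan silently assumes the constant is the \emph{same} on every connected component of $I\setminus Q_I$. What you actually get from $\partial_x(E'(u)+V)=0$ on $I\setminus Q_I$ is a possibly $J$-dependent constant $c_J=E'(u)+V$ on each connected component $J$ of $I\setminus Q_I$, and $Q_I$ can disconnect $I$ into infinitely many such $J$'s (a priori $Q_I$ is merely a Lebesgue-null closed subset of $I$ where $u=+\infty$, not a finite set). Continuity of $E'\circ u+V$ on $I$ together with local constancy on an open dense subset does \emph{not} by itself force global constancy — the Cantor function shows this. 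Your remark about the matching of constants addresses the easy case (different components of $\Pos u$, separated by zeros of $u$, where no matching is needed) but not the one at issue (different components of $I\setminus Q_I$ inside a single $I$).

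The paper sidesteps this entirely by establishing Sobolev regularity of $E'\circ u$ on \emph{all} of $\Pos u$, not just on $\Pos u\setminus Q(u)$: it writes $E'\circ u=L\circ(\beta\circ u)$ with $L:=E'\circ\beta^{-1}$, observes that the hypothesis $\betainfty<+\infty$ makes $L$ extendable to a $C^1$ function on $(0,\betainfty]$ with $L'(\betainfty)=0$, and then applies the $C^1$-with-Sobolev chain rule to $\beta\circ u\in W^{1,1}_{\rm loc}$ (which is exactly the regularity guaranteed by $\cI(\rho)<+\infty$). This gives $E'\circ u\in W^{1,1}_{\rm loc}(\Pos u)$ and hence $\partial_x(E'(u)+V)=0$ in $\DD'(I)$, so the constant is automatically global on $I$. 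Your route can be repaired — for instance by noting that each $c_J$ equals $V$ at a boundary point of $J$ in $Q_I$, and that $\Leb1(Q_I)=0$ forces the Lipschitz $V$ to map $Q_I$ into a null set, so the intermediate value theorem rules out $E'(u)+V$ taking an interval of values — but that is an extra non-trivial argument you have not supplied, and the factorization argument via $L$ is cleaner and is the one the paper uses.
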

\begin{corollary}
  \label{cor:obvious}
  If $V$ satisfies
  \eqref{eq:95}, \eqref{eq:96},
  and
  \begin{equation}
    \text{the set $Q$ of \eqref{eq:95} is an interval $[q_-,q_+]$, $V'\ge0$ in
      $(q_+,+\infty)$, and $V'\le 0$ in $(-\infty,q_-),$}
\label{eq:110}
\end{equation}
(\eqref{eq:110} is always satisfied if $V$ is convex),
then
  \eqref{nulldissipation} holds
  and there exists a unique stationary measure in
  $\PlusMeasuresTwo{\R,\mass}$ which coincides with the unique
  minimizer of $\cF$ in $\PlusMeasuresTwo{\R,\mass}$.
\end{corollary}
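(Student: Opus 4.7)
The plan is to reduce the corollary to the equivalence \eqref{nulldissipation}. Once that is established, Theorem \ref{thm:Fisher=0} identifies stationary Wasserstein solutions in $\PlusMeasuresTwo{\R,\mass}$ with measures $\rho\in \CPlusMeasuresTwo{\R,\mass}$ satisfying $\cI(\rho)=0$, while Theorem \ref{thm:main4} together with Remark \ref{rem:examples} provides a unique minimizer of $\cF$: under \eqref{eq:110} the set $Q$ is a closed interval, hence (by Remark \ref{rem:examples}) a singleton whenever $\mass_{\rm c}<+\infty$, so uniqueness of the minimizer is automatic in every regime.

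For the easy implication, the minimizer $\rho_{\rm min}=u_{\rm min}\Leb1+\rho_{\rm min}^\perp$ given by \eqref{eq:30} satisfies $E'(u_{\rm min})=\frak v-V$ on $\Pos{u_{\rm min}}$, since $H=(E')^{-1}(-\,\cdot\,)$; differentiating and using $\beta'(r)=rE''(r)$ yields $\partial_x\beta(u_{\rm min})/u_{\rm min}=-V'$, so the first integral in \eqref{eq:5} vanishes. The singular part $\rho_{\rm min}^\perp$ is concentrated on $Q\subset\{V'=0\}$ (every point of $Q$ is a global minimum of $V\in C^2$), so the second integral in \eqref{eq:5} vanishes as well and $\cI(\rho_{\rm min})=0$.

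The converse is the heart of the proof. The case $\dV=+\infty$ is exactly Theorem \ref{thm:statI}, so I focus on $\dV<+\infty$ and appeal to Theorem \ref{thm:main_stationary}. The key geometric step, and the unique place where hypothesis \eqref{eq:110} is used, is to show that every admissible local sublevel $I=(a,b)\in\Conn u$ satisfies $a<q_-$ and $q_+<b$, so that $Q\subset I$. Indeed, by \eqref{eq:110} the function $V$ is non-increasing on $(-\infty,q_-]$ and non-decreasing on $[q_+,+\infty)$; if $a$ and $b$ belonged to the same monotone branch, then $V(a)=V(b)$ would force $V$ to be constant on $[a,b]$, contradicting the strict inequality $V(x)<V(a)$ imposed on $(a,b)$; and since $V>V_{\rm min}$ outside $Q$, neither endpoint can lie in $Q$ either. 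Disjointness of the elements of $\Conn u$ then forces $\Conn u=\{I_0\}$ for a single interval $I_0=(a_0,b_0)\supset Q$ (the alternative $u\equiv 0$ together with $\rho\in\CPlusMeasures\R$ forces $\rho=0$, which is ruled out when $\mass>0$).

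It remains to identify $\rho$ with the minimizer. Monotonicity of $V$ outside $I_0$ gives $V(x)\ge V(a_0)$ there, hence $H(V-\frak v_{I_0})\equiv 0$ on $\R\setminus I_0$ and $\int_{I_0}u\,\d x=M_\R(\frak v_{I_0})$. A short dichotomy on $\frak v_{I_0}\in(V_{\rm min}-\dV,V_{\rm min}]$ closes the argument: if $\frak v_{I_0}<V_{\rm min}$ then $u=H(V-\frak v_{I_0})$ is finite on $Q$, so \eqref{eq:2} forces $\rho^\perp=0$ and $\mass=M_\R(\frak v_{I_0})<\mass_{\rm c}$, giving $\frak v_{I_0}=M_\R^{-1}(\mass)=\frak v$; whereas if $\frak v_{I_0}=V_{\rm min}$ then $\int_{I_0}u\,\d x=\mass_{\rm c}$ and $\rho^\perp(Q)=\mass-\mass_{\rm c}\ge 0$. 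In both cases $\rho$ coincides with the minimizer described by Theorem \ref{thm:main4}, concluding the proof. I expect the main obstacle to be the geometric step ruling out admissible local sublevels that avoid $Q$; the rest is essentially a bookkeeping application of the characterizations already at hand.
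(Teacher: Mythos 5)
Your proof is correct and follows essentially the same route as the paper: reduce to the case $\dV<+\infty$ via Theorem~\ref{thm:statI}, use hypothesis \eqref{eq:110} together with Theorem~\ref{thm:main_stationary} to show that every admissible local sublevel in $\Conn u$ must contain $Q$, hence that $\Pos u$ has a single connected component $I_0\supset Q$, and identify $\rho$ with the minimizer of Theorem~\ref{thm:main4}. Your dichotomy on $\frak v_{I_0}\in(V_{\rm min}-\dV,V_{\rm min}]$ is a careful elaboration of the step the paper condenses into ``Theorem~\ref{thm:main4} shows that $\rho$ is a minimizer of~$\cF$.''
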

\begin{remark}
  \label{rem:converse_obvious}
  It is possible to prove a converse form of Corollary
  \ref{cor:obvious}:
  if $\dV<+\infty$ and
  for every value of $\mass>0$ there exists a unique steady state
  in $\PlusMeasuresTwo{\R,\mass}$ then $V$ satisfies \eqref{eq:110}.
\end{remark}


\begin{example}\label{ex:1}
Let us choose $\beta(r)=\arctan r$, so that $E(r)=\displaystyle r\log\left(\frac{ r}{\sqrt {1+r^2}}\right)-\arctan r$ and $E'(r)=\displaystyle
\log\left(\frac{ r}{\sqrt {1+r^2}}\right)$. Notice that $\dV=+\infty$. One can compute explicitly $\displaystyle
H(v)=\frac{\rme^{-v}}{\sqrt{1-\rme^{-2v}}}$, for $v>0$. If the potential is $V(x)=|x|^\alpha$, with $\alpha>1$, the critical mass is defined by
$\displaystyle \mass_{\rm c}=\int_\R \frac{\rme^{-|x|^\alpha}}{\sqrt{1-\rme^{-2|x|^\alpha}}}\,\d x.$ It follows that $\mass_{\rmc}<+\infty$ if and only if $\alpha<2$.

We find that
\begin{equation}
u_{\rm min}(x)=\frac{\rme^{-|x|^\alpha+\frak
    v}}{\sqrt{1-\rme^{-2|x|^\alpha+2\frak v}}}.
\end{equation}
If $\alpha \ge 2$, for every value of the mass $\mass$, the unique minimum point, which is also the unique stationary solution, can not have a
singular part, and it is bounded and positive. The same situation occurs when $\alpha<2$ and $\mass < \mass_\rmc$. If $\alpha<2$ and $\mass = \mass_\rmc$, then the unique stationary state is infinite
at $x=0$ but without a singular part, whereas for $\mass >\mass_\rmc$ the singular part is $\urho^\perp_{\rm min}=(\mass-\mass_\rmc)\delta_0$.
\end{example}

\begin{example}\label{ex:2}
Let us choose  $\beta(r)=\frac{r^2}{1+r^2}$. Then $E(r)=-r\arctan (1/r)$ and $E'(r)=\frac{r}{1+r^2}-\arctan (1/r)$. In this case
$\dV=\frac \pi 2$.

Let us observe that $E'(r)$ has the same behavior of $r\mapsto-1/r^3$ as $r\to +\infty$. Therefore $H(v)$ has the same behavior of $v\mapsto
v^{-1/3}$ for $v\to 0^+$. Considering again the potential $V(x)=|x|^\alpha$, with $\alpha>1$, it follows that $\mass_{\rmc}<+\infty$ if and only
if $\alpha<3$.

The support of the unique stationary state
is $\{x\in\R: |x|\le (\frak v+\pi/2)^{1/\alpha}\}$
and it is compact for every value of $\mass$ and $\alpha$.

Finally we show 
a measure $\urho\in\CPlusMeasuresTwo{\R,\mass}$ satisfying
$\cI(\urho)=0$ which is not of the form \eqref{eq:30}.

To this aim we consider the double well potential $V(x)=\pi(x-1)^2(x+1)^2$. Let $\mass>\mass_\rmc$. Defining $u(x)=H(V(x))$ for $x>0$ and $u(x)=0$
for $x\leq 0$, we observe that $u$ is continuous on $\R$ with values in $[0,+\infty]$ and $\int_\R u(x)\,\d x = \mass_\rmc/2$. Consequently, the
measure $\urho= u\Leb{1}+(\mass-\mass_\rmc/2)\delta_1$ belongs to $\CPlusMeasuresTwo{\R,\mass}$, satisfies $\cI(\urho)=0$ but is not of
the form \eqref{eq:30}.

We can construct a similar example when $V$ has a local minimum greater than $V_{\rm min}$.  For instance we can consider a potential $V$ defined
by $V(x)=2\pi(x+1)^2+1$ for $x<-1/2$, $V(x)=2\pi(x-1)^2$ for $x>1/2$ and suitably defined in $[-1/2,1/2]$ in order to satisfy the condition $V(x)>\pi/2$ and the
$\lambda$-convexity assumption. Then the support of $\urho_{\rm min}$ is contained in $[-3/2,-1/2]\cup[1/2,3/2]$. Let us define
$u(x)=H(V(x)-1)$ for $x<0$ and $u(x)=0$ for $x\geq 0$, and $\urho= u\Leb{1}+(\mass-\tilde\mass_\rmc)\delta_{-1}$, where $\tilde\mass_\rmc:=
\mass_\rmc - \int_{-\infty}^0H(V(x))\,\d x$. Then $\urho\in\CPlusMeasuresTwo{\R,\mass}$, $\cI(\urho)=0$ but $\urho$ is not of the form
\eqref{eq:30} and it is not a minimizer of $\cF$.
\end{example}


\begin{remark}
We point out that the case $\dV=+\infty$ reveals some analogies with a diffusion which is linear near to $0$. In this case we
have the immediate strict positivity of the solution also starting from compactly supported initial data.

On the contrary, the case $\dV<+\infty$ corresponds to a slow diffusion near to $0$. In this case, starting from compactly supported
initial data the solution could remain compactly supported for all
time and it may happen that
as $t\to+\infty$ the solution converges to a steady state which is not
a global minimum of $\cF$.
\end{remark}

\begin{remark}[Examples of singular solutions]
  \label{rem:singularex}
  Let $\rho:=u\Leb 1+\rho^\perp\in \CPlusMeasuresTwo{\R,\mass}$ be a
  steady state
  of \eqref{eq:rho} with
  $\rho^\perp\neq 0$: e.g., one can consider the case when
  $\mass_\rmc<+\infty$ and take a minimizer of $\cF$ with
  $\mass>\mass_\rmc$.
  If $\tilde\rho_0=\tilde u\Leb 1+\rho^\perp\in \CPlusMeasuresTwo{R,\mass}$ with
  $\tilde u\ge u$ then the comparison principle shows that the
  Wasserstein
  solution $\tilde\rho_t$ of \eqref{eq:rho} with initial datum $\tilde
  \rho_0$
  is singular and its singular part is $\rho^\perp$ for every $t\ge0$.
\end{remark}

\subsection{Asymptotic behaviour}
\label{subsec:asymptotic}
{Let us first considering the case of a convex potential $V$.}
Here we can
apply the general results about the asymptotic behavior
for displacement convex functionals (see \cite{ags}).

  Moreover, as we observed in Remark \ref{rem:examples}, the specific
  form of the functional $\cF$ yields that it has only one minimizer
  $\urho_{\rm min}$ in each class $\CPlusMeasuresTwo{\R,\mass}$ which
  is also the unique steady state by Theorem \ref{thm:statI} and
  Corollary \ref{cor:obvious}: the study of the asymptotic behaviour
  is therefore greatly simplified.

\begin{theorem}[Asymptotic behavior I: the convex case]\label{thm:main6}

  Let us assume that the potential $V$ is convex (i.e.\
    \eqref{crescita-quadratica} is satisfied with $\lambda=0$)
    and
    satisfies \eqref{eq:95} and \eqref{eq:97},
    and
  let $\rho_{\rm min}$ be the unique minimizer of $\cF$ in
  $\PlusMeasuresTwo{\R,\mass}$.
  If $\rho$ is a Wasserstein solution to \eqref{eq:rho} in
  $\PlusMeasuresTwo{\R,\mass}$
  then $\rho_t$ weakly converges to $\rho_{\rm min}$ as $t\to+\infty$
  in the duality
  with continuous and bounded functions. Moreover, for every $t\in (0,+\infty)$
  \begin{equation}
    \label{eq:105}
 \displaystyle  \cF(\urho_t)-\cF(\urho_{\rm min}) \leq \frac{W^2_2(\urho_0,\urho_{\rm min})}{2t} , \quad
 \displaystyle \cI(\urho_t) \leq \frac{W^2_2(\urho_0,\urho_{\rm
     min})}{t^2}.
  \end{equation}

  If the potential $V$ also satisfies \eqref{crescita-quadratica} with
  $\lambda>0$, then
  for every $t>0$ we have the exponential estimates
  \begin{gather}
      W_2(\urho_t,\urho_{\rm min}) \leq \rme^{-\lambda t} W_2(\urho_0,\urho_{\rm min}),\\
\cF(\urho_t)-\cF(\urho_{\rm min}) \leq \rme^{-2\lambda t} \big(\cF(\urho_0)-\cF(\urho_{\rm min})\big),\quad
     \cI(\urho_t) \leq \rme^{-\lambda t}\frac{W^2_2(\urho_0,\urho_{\rm min})}{t^2}.
\end{gather}
\end{theorem}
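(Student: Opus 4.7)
The plan is to view \eqref{eq:rho} as the Wasserstein gradient flow of the $\lambda$-displacement convex functional $\cF$ in $\PlusMeasuresTwo{\R,\mass}$ (with $\lambda\ge0$ here) and to apply the abstract asymptotic theory of \cite{ags}. Under the convexity of $V$ and the coercivity conditions \eqref{eq:95}--\eqref{eq:97}, Theorem \ref{thm:main4} together with Remark \ref{rem:examples} produces a unique minimizer $\rho_{\rm min}$ of $\cF$, which is stationary by Corollary \ref{cor:obvious}. The main tool is the Evolution Variational Inequality
\begin{equation*}
\frac12\frac{d}{dt}W_2^2(\rho_t,\sigma)+\frac{\lambda}{2} W_2^2(\rho_t,\sigma)+\cF(\rho_t)\le \cF(\sigma),\qquad\sigma\in\PlusMeasuresTwo{\R,\mass},
\end{equation*}
which is encoded in the stability estimate \eqref{eq:15} and the energy identity \eqref{eq:13} of Theorem \ref{thm:main1}.

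In the convex case $\lambda=0$, specializing the EVI to $\sigma=\rho_{\rm min}$ gives $\frac12\frac{d}{dt}W_2^2(\rho_t,\rho_{\rm min})\le\cF(\rho_{\rm min})-\cF(\rho_t)\le 0$; integrating on $[0,t]$ and exploiting the monotonicity of $s\mapsto\cF(\rho_s)$ delivers the first bound in \eqref{eq:105}. For the $1/t^2$ decay of $\cI$, I combine the monotonicity of $\cI$ furnished by \eqref{eq:19} at $\lambda=0$ with the energy identity \eqref{eq:13} applied on $[t/2,t]$, obtaining $\frac{t}{2}\cI(\rho_t)\le\cF(\rho_{t/2})-\cF(\rho_{\rm min})\le W_2^2(\rho_0,\rho_{\rm min})/t$; the sharp constant stated in \eqref{eq:105} is then recovered via the refined second-order form of the EVI as in \cite[Ch.~4]{ags}. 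Weak convergence of $\rho_t$ to $\rho_{\rm min}$ follows from a standard compactness argument: the coercivity \eqref{eq:95} of $V$ and the uniform bound on $\cV(\rho_t)$ provide tightness, while lower semicontinuity of $\cF$ combined with $\cF(\rho_t)\downarrow\cF(\rho_{\rm min})$ forces every weak limit point to coincide with the unique minimizer.

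In the strictly convex case $\lambda>0$, the contraction estimate on $W_2$ is \eqref{eq:15} with $\eta_0=\rho_{\rm min}$. The exponential decay of $\cF(\rho_t)-\cF(\rho_{\rm min})$ comes from the Polyak--Lojasiewicz inequality $\cI(\rho)\ge 2\lambda\bigl(\cF(\rho)-\cF(\rho_{\rm min})\bigr)$ -- a consequence of $\lambda$-displacement convexity of $\cF$ together with the identification of $\sqrt{\cI}$ as the metric slope of $\cF$ provided by Theorem \ref{th:charsubdiff}, obtained by completing the square in the $\lambda$-convex subdifferential inequality along the Wasserstein geodesic from $\rho$ to $\rho_{\rm min}$ -- coupled with $\frac{d}{dt}\cF(\rho_t)=-\cI(\rho_t)$ and Gronwall. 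The corresponding exponential bound on $\cI$ is then produced by applying the convex-case estimate on $[0,t/2]$ and chaining it with \eqref{eq:19} on $[t/2,t]$. The main technical obstacle is the verification of the Polyak--Lojasiewicz inequality at measures with nontrivial singular part, which requires precisely the subdifferential description given by Theorem \ref{th:charsubdiff}; once this identification is in place, the abstract gradient flow asymptotic machinery of \cite{ags} applies directly.
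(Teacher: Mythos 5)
The paper offers no proof of Theorem~\ref{thm:main6} beyond the remark that precedes it --- ``Here we can apply the general results about the asymptotic behavior for displacement convex functionals (see \cite{ags})'' --- so what is being tested is whether you correctly identify which pieces of the \cite{ags} machinery produce each inequality. You do: the EVI integrated against $\sigma=\rho_{\rm min}$ together with the monotonicity of $\cF(\rho_t)$ gives the first estimate in \eqref{eq:105}; the energy identity \eqref{eq:13} over $[t/2,t]$ with the monotonicity \eqref{eq:19} gives the slope decay; tightness from coercivity plus lower semicontinuity of $\cF$ plus $\cF(\rho_t)\downarrow\cF(\rho_{\rm min})$ gives weak convergence; \eqref{eq:15} gives the Wasserstein contraction; and the Polyak--Lojasiewicz inequality $\cI(\rho)\ge 2\lambda\bigl(\cF(\rho)-\cF(\rho_{\rm min})\bigr)$, obtained from the $\lambda$-convex subdifferential inequality \eqref{eq:40} with optimal choice of $W_2$, together with Theorem~\ref{th:charsubdiff}, gives the exponential energy decay. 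This is exactly the route \cite{ags} takes.

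One imprecision worth noting: your final chaining argument for the $\lambda>0$ slope bound does not reproduce the stated constant. Splitting at $t/2$ and using the sharp convex-case estimate gives $\cI(\rho_{t/2})\le 4W_2^2(\rho_0,\rho_{\rm min})/t^2$, and then \eqref{eq:19} on $[t/2,t]$ yields $\cI(\rho_t)\le 4\,\rme^{-\lambda t}\,W_2^2(\rho_0,\rho_{\rm min})/t^2$, off by a factor of~$4$. The clean way to recover the stated bound is to use directly the $\sfE_\lambda$-form of the regularization estimate from \cite{ags} (cf.\ \eqref{eq:18} and \eqref{eq:27}), which gives $\cI(\rho_t)\le W_2^2(\rho_0,\rho_{\rm min})/\sfE_\lambda(t)^2=\lambda^2 W_2^2(\rho_0,\rho_{\rm min})/(\rme^{\lambda t}-1)^2$, and then observe that $\rme^{\lambda t}-1=2\rme^{\lambda t/2}\sinh(\lambda t/2)\ge \rme^{\lambda t/2}\lambda t$, so that $\lambda^2/(\rme^{\lambda t}-1)^2\le \rme^{-\lambda t}/t^2$. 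With that substitution the proof is complete.
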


  The last result concerns more general potentials $V$: a simple
  characterization
  of the asymptotic behaviour of a Wasserstein solution is possible
  only when there exists a unique steady state for \eqref{eq:rho}
  (which therefore coincides with the minimizer of $\cF$):
  this is the case when $\dV=+\infty$ and
  $V$ satisfies \eqref{eq:95} and \eqref{eq:97}, or when $\dV<+\infty$ and $V$
  satisfies
  the conditions of Corollary \ref{cor:obvious}.

\begin{theorem}[Asymptotic behavior II]
  \label{thm:main5}
  Let us suppose that $V$ satisfies \eqref{eq:95} and \eqref{eq:96}
  and
  let us assume that there exists a unique steady state $\bar\rho\in
  \PlusMeasures{\R,\mass}$
  with $\cI(\bar\rho)=0$ ($\bar\rho$ is also the unique minimizer of
  $\cF$ in $\PlusMeasures{\R,\mass}$).
  If $\rho$ is a Wasserstein solution to \eqref{eq:rho} in
  $\PlusMeasuresTwo{\R,\mass}$ then
  \begin{equation}
    \label{eq:109}
    \rho_t\weakto\bar\rho\quad\text{weakly as }t\to+\infty,\quad
    \lim_{t\uparrow+\infty}\cI(\rho_t)=0.
  \end{equation}
  In particular the continuous density $u_t$ converges to $\bar u$
  uniformly on the compact sets of $\Dom {\bar u}$;
  if moreover the support of $\rho_0^\perp$ is compact and
  $\mass<\mass_\rmc$,
  then there exists a finite time $T>0$
  such that $\rho_t\ll\Leb 1$ for every $t\ge T$.
  \end{theorem}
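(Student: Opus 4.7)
The plan is to exploit the energy–dissipation structure of \eqref{eq:rho} together with the uniqueness hypothesis on the steady state $\bar\rho$, in four steps.

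\textbf{Step 1 (Decay of $\cI$ and weak convergence to $\bar\rho$).} I would first combine the energy identity \eqref{eq:13} with the global lower bound $\cF\ge \cF(\bar\rho)>-\infty$ to obtain $\int_0^{+\infty}\cI(\rho_t)\,\d t\le \cF(\rho_0)-\cF(\bar\rho)<+\infty$; the quasi-monotonicity \eqref{eq:19} gives $\cI(\rho_t)\le \rme^{2|\lambda|}\cI(\rho_s)$ for $s\le t\le s+1$, and integrability then forces $\cI(\rho_t)\to 0$. Next, the regularization estimate \eqref{eq:18} at $t=1$ combined with the monotonicity of $\cF$ produces a uniform upper bound on $\cF(\rho_t)$; since $E\le 0$ (by convexity, $E(0)=0$ and $E'<0$ from \eqref{eq:121}), we have $\cE\le 0$, so $\cV(\rho_t)=\cF(\rho_t)-\cE(\rho_t)$ is uniformly bounded and the coercivity \eqref{eq:95} makes $\{\rho_t\}_{t\ge 1}$ tight. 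Any weak accumulation point $\rho_\infty$ then satisfies $\cI(\rho_\infty)=0$ by the lower semicontinuity Theorem \ref{thm:lsc-dissipation2}, and the uniqueness assumption identifies $\rho_\infty=\bar\rho$, yielding $\rho_t\weakto \bar\rho$ along the full family.

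\textbf{Step 2 (Uniform convergence of $u_t$ on compact subsets of $\Dom{\bar u}$).} Fix a compact $K\subset\Dom{\bar u}$, so that $\bar u$ is continuous and bounded on a slightly larger compact neighborhood. I would first use that $\int|V'|^2\,\d\rho_t^\perp\to 0$ (from $\cI(\rho_t)\to 0$) and that $\bar\rho^\perp$ is concentrated on $\R\setminus\Dom{\bar u}$ to conclude that $\rho_t^\perp$ does not concentrate on $K$, so $u_t\Leb 1\restr{K}\weakto \bar u\Leb 1\restr{K}$. The first term of \eqref{eq:5} controls $\de_x\beta(u_t)+V'u_t$ in $L^2(u_t\,\d x)$ over $K$, and together with a pointwise $L^\infty$-bound on $u_t$ over $K$ (obtained by integrating the resulting approximate ODE from a good interior point granted by the weak convergence) this yields $\beta(u_t)$ bounded in $H^1_{\mathrm{loc}}(K)$. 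A classical compactness/uniqueness argument then promotes the convergence to $\beta(u_t)\to\beta(\bar u)$ uniformly on $K$, hence $u_t\to\bar u$ by the continuity of $\beta^{-1}$ on $[0,\max_K\beta(\bar u)]\subset [0,\betainfty)$.

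\textbf{Step 3 (Absolute continuity for $\mass<\mass_\rmc$ and compactly supported singular data).} Assume $\supp(\rho_0^\perp)\subset K_0$ compact and $\mass<\mass_\rmc$. By Remark \ref{rem:examples}, $\bar u$ is bounded and $\Dom{\bar u}=\R$. The identity $\frac{\d}{\d t}V(\sfX_t(x))=-|V'(\sfX_t(x))|^2\le 0$ shows that $V$ is a Lyapunov function for the flow, so the compact sublevel set $K_1:=\{x\in\R:V(x)\le\max_{K_0}V\}$ (compact by \eqref{eq:95}) contains $\sfX_t(K_0)$ for every $t\ge 0$. Theorem \ref{thm:main3} then gives $\supp(\rho_t^\perp)\subset K_1\subset\Dom{\bar u}$. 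Applying Step 2 with $K=K_1$ produces $u_t\to \bar u$ uniformly on $K_1$, hence $u_t<+\infty$ on $K_1$ for $t$ large, so $K_1\subset\Dom{u_t}$; Definition \ref{def:right_measures} then forces $\rho_t^\perp(\R)=\rho_t^\perp(K_1)=\rho_t^\perp(\Dom{u_t})=0$, proving $\rho_t\ll\Leb 1$ for $t\ge T$.

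The main technical obstacle lies in Step 2: weak convergence of measures does not decouple $u_t\Leb 1$ from $\rho_t^\perp$, and both terms of $\cI$ must be exploited simultaneously—the second to push the singular mass onto $\R\setminus\Dom{\bar u}$, the first to provide the $H^1$-compactness of $\beta(u_t)$ required to identify the limit pointwise; care is needed when $K$ meets the critical set $\{V'=0\}$, which must be handled by a short deformation using that $\bar u$ is finite on $K$.
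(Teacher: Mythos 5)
Your overall plan mirrors the paper's proof: Step 1 (energy dissipation identity \eqref{eq:13}, dissipation inequality \eqref{eq:19}, coercivity \eqref{eq:95}, lower semicontinuity and uniqueness) and Step 3 (Theorem \ref{thm:main3} plus boundedness of $\bar u$) are essentially what is done. But two points deserve attention.

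First, a small gap in Step 1: you write $\int_0^{+\infty}\cI(\rho_t)\,\d t\le \cF(\rho_0)-\cF(\bar\rho)$, but $\cF(\rho_0)$ may be $+\infty$ since $\rho_0$ is only assumed to lie in $\PlusMeasuresTwo{\R,\mass}$ and $V$ need not be quadratically bounded above. One must start the integral at a positive time $t_0$, where the regularization estimate \eqref{eq:18} guarantees $\cF(\rho_{t_0})<+\infty$; you invoke \eqref{eq:18} later for a different purpose, so the fix is available to you, but the bound as stated is not justified.

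Second, Step 2 reinvents, in a substantially less complete way, what Theorem \ref{thm:lsc-dissipation2} already delivers: once you know $\rho_{t_n}\weakto\bar\rho$ with $\limsup_n\cI(\rho_{t_n})<+\infty$, that theorem directly gives the uniform convergence $u_{t_n}\to\bar u$ on compact subsets of $\Dom{\bar u}$, with no further work. Your proposed argument for a local $L^\infty$ bound (``integrating the resulting approximate ODE from a good interior point granted by the weak convergence'') is vague precisely at the step where the proof of Theorem \ref{thm:lsc-dissipation} supplies the real mechanism, namely the $H^1_{\mathrm{loc}}$ compactness of $G(u)$ with $G(r)=\int_0^r\beta'(s)/\sqrt{s}\,\d s$ and the identification $\Dom u=\{G(u)<G_\infty\}$; in particular weak convergence of the measures alone does not provide a ``good interior point'' with a uniform height bound, and the case where $K$ meets the critical set of $V$ (which you flag) is automatically handled by the lemma but not by your sketch. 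The correct route is simply to cite Theorem \ref{thm:lsc-dissipation2}, as the paper does.
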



\section{Wasserstein distance and differential calculus}
\label{sec:Wass}

In this Section we recall the definition and the main properties of the Wasserstein distance and differential calculus in Wasserstein spaces (we refer the interested reader to \cite{V03}, \cite{V09}, \cite{ags} for
more details). Also, the subdifferential of the energy functional $\cF$ will
be characterized and discussed.

\subsection{Transport of measures, Wasserstein distance, and
  differential calculus.}
If $\urho\in \PlusMeasures{\R^d,\mass}$ and $\rr:\R^d\to \R^h$ is a Borel map, the push-forward of $\urho$ through $\rr$ is the measure
$\mu=\rr_\#\urho\in \PlusMeasures{\R^h,\mass}$ defined by
\begin{equation}
  \label{eq:33}
  \mu(A):=\urho(\rr^{-1}(A))\quad\text{for every Borel subset }A\subset \R^h.
\end{equation}
It can also be characterized by the change-of-variable formula
\begin{equation}
  \label{eq:34}
  \int_{\R^h}\varphi(y)\,\d\mu(y)=
  \int_{\R^d}\varphi(\rr(x))\,\d\urho(x),
\end{equation}
{for every bounded or
  nonnegative Borel function }$\varphi:\R^h\to\R.$\\
According to this definition, the marginals $\urho^i\in \PlusMeasures{\R,\mass}$, $i=1,2$, of $\rrho\in \PlusMeasures{\R\times \R,\mass}$ can be
defined by $\urho^i=(\pi^i)_\#\rrho$, where $\pi^i(x^1,x^2)=x^i$ is the projection on the $i$-th component in $\R\times \R$. In this case we say
that $\rrho$ is a coupling between $\urho^1,\urho^2$ and we denote by $\Gamma(\urho^1,\urho^2)$ the (weakly) closed convex subset of
$\PlusMeasures{\R\times\R,\mass}$ consisting of such couplings. We recall that a sequence of measures $\urho_n\in\PlusMeasures{\R^d,\mass}$
weakly converges to $\urho\in\PlusMeasures{\R^d,\mass}$ if
$\lim_{n\to+\infty}\int_{\R^d}\varphi(y)\,\d\urho_n(y)=\int_{\R^d}\varphi(y)\,\d\urho(y)$ for every continuous, bounded function $\varphi\in
C_{\rm b}(\R^d).$

For every couple of measures $\urho^1,\urho^2\in \PlusMeasuresTwo{\R,\mass}$ the $L^2$-Wasserstein distance is defined by
\begin{equation}
  \label{eq:35}
  W_2^2(\urho^1,\urho^2):=\min\Big\{\int_{\R\times\R}
    |x^1-x^2|^2\,\d\rrho(x^1,x^2):
    \rrho\in \Gamma(\urho^1,\urho^2)\Big\}.
\end{equation}
The space $\PlusMeasuresTwo{\R,\mass}$ endowed with the distance $W_2$ is a complete separable metric
space and the topology induced by the Wasserstein distance is stronger than the narrow topology:
  in fact a sequence $\rho_n$ converges to $\rho$ in
$\PlusMeasuresTwo{\R,\mass}$ iff
\eqref{eq:14} holds (see e.g. \cite{V03}).

{There exists a unique optimal coupling $\rrho_{\rm opt}$ realizing
  the minimum in \eqref{eq:35}: it} admits a nice representation in terms of the cumulative distribution
functions $M_{\urho^i}$ of $\urho^1,\urho^2$ and of their
pseudo-inverses $Y_{\urho^i}$.

  Let us first recall their definitions in the case of $\sigma\in
  \PlusMeasures{\R,\mass}$
\begin{equation}
  \label{eq:36}
  M_{\sigma}(x):=\sigma\big((-\infty,x]\big)\quad x\in \R;\qquad
  Y_{\sigma}(w):=\inf\Big\{x\in \R: M_{\sigma}(x)\ge w\Big\},\quad
  w\in (0,\mass).
\end{equation}
Notice that $M_{\sigma}$ is a right-continuous and nondecreasing map
from $\R$ to $[0,\mass]$; if we denote by $\lambda_\mass=\Leb 1\restr{(0,\mass)}$ the restriction
of the Lebesgue measure to the interval $(0,\mass)$, it is possible to show that
\begin{equation}
  \label{eq:37}
  \big(Y_{\urho^i}\big)_\#\lambda_\mass=\urho^i,\quad
  \big(Y_{\urho^1},Y_{\urho^2}\big)_\#\lambda_\mass=\rrho_{\rm opt}
\end{equation}
  so that
  \begin{equation}
    \label{eq:72}
    W_2^2(\rho^1,\rho^2)=\int_0^\mass
    \big|Y_{\rho^1}(w)=Y_{\rho^2}(w)\big|^2\,\d w=
    \|Y_{\rho^1}-Y_{\rho^2}\|_{L^2(0,\mass)}^2.
  \end{equation}
  The map $\rho\mapsto Y_\rho$ provides an isometry between
  $\PlusMeasuresTwo{\R,\mass}$ and
  the cone of nondecreasing function in $L^2(0,\mass)$.

\paragraph{Displacement interpolation and displacement convexity.}
Let $\urho^0,\urho^1\in \PlusMeasuresTwo{\R,\mass}$.
Their \emph{displacement interpolation} is the path $\urho^\vartheta\in
\PlusMeasuresTwo{\R,\mass}$ with $\vartheta\in [0,1]$, defined by
\begin{equation}
  \label{eq:38}
  \urho^\vartheta:=\big((1-\vartheta)Y_{\urho^0}+\vartheta
  Y_{\urho^1}\big)_\#\lambda_\mass = \big((1-\vartheta)\pi^1+\vartheta
  \pi^2\big)_\# \rrho_{\rm opt}.
\end{equation}

The curve  $\vartheta\mapsto \rho^\vartheta$ is the unique (minimal, constant speed)
  geodesic connecting $\rho^0$ to $\rho^1$ in
  $\PlusMeasuresTwo{\R,\mass}$ and it corresponds to
  the segment connecting $Y_{\rho^0}$ to $Y_{\rho^1}$ in
  $L^2(0,\mass)$.

We say that a functional $\cG:\PlusMeasuresTwo{\R,\mass}\to
(-\infty,+\infty]$
is displacement $\lambda$-convex if for every $\urho^0,\urho^1$ in its proper domain we have
\begin{equation}
  \label{eq:39}
  \cG(\urho^\vartheta)\le
  (1-\vartheta)\cG(\urho^0)+\vartheta\cG(\urho^1)-\frac\lambda2\vartheta(1-\vartheta)
  W_2^2(\urho^0,\urho^1).
\end{equation}
In the one-dimensional case, the displacement convexity of the
internal functional $\cE$
is equivalent to the convexity of the energy density $E$
  and it coincides with convexity along generalized geodesics (see
  \cite[Definition 9.2.4]{ags}).

 \begin{proposition}[Displacement $\lambda$-convexity and lower semicontinuity of $\cF$]
 \label{prop:lscF}
 $\cF$ is lower semicontinuous with respect to the Wasserstein
 distance in $\PlusMeasuresTwo{\R,\mass}$
 and displacement $\lambda$-convex.
 Moreover $\cF$ satisfies the following coercivity property
 \begin{equation}
   \inf\Big\{\cF(\rho):\rho\in \PlusMeasuresTwo{\R,\mass},\quad
   \int_\R|x|^2\,\d\rho(x) \leq C\Big\} >-\infty\quad\text{for every }C>0.
 \end{equation}
 \end{proposition}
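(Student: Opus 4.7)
The plan is to split $\cF=\cE+\cV$ and handle the two pieces separately, working in the $L^2$-picture provided by the isometry $\rho\mapsto Y_\rho$ between $\PlusMeasuresTwo{\R,\mass}$ and the closed convex cone of nondecreasing functions in $L^2(0,\mass)$. Along the displacement geodesic $\rho^\vartheta=(Y^\vartheta)_\#\lambda_\mass$ with $Y^\vartheta=(1-\vartheta)Y^0+\vartheta Y^1$, the change of variable $x=Y^\vartheta(w)$ gives $\cV(\rho^\vartheta)=\int_0^\mass V(Y^\vartheta(w))\,\d w$; the pointwise $\lambda$-convexity of $V$ together with the identity $W_2^2(\rho^0,\rho^1)=\|Y^0-Y^1\|_{L^2}^2$ yields the displacement $\lambda$-convexity of $\cV$. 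For $\cE$, using $u^\vartheta(Y^\vartheta(w))=1/(Y^\vartheta)'(w)$ wherever $(Y^\vartheta)'>0$, the same change of variable gives $\cE(\rho^\vartheta)=\int_0^\mass\phi((Y^\vartheta)'(w))\,\d w$ with $\phi(s):=sE(1/s)$ for $s>0$ and $\phi(0):=0$ (the endpoint reflects the fact that flat intervals of $Y^\vartheta$ produce Dirac atoms that carry no internal energy, consistent with $E^\infty=-\beta_\infty$ being finite so that $sE(1/s)\to0$ as $s\downarrow0$). A direct computation using \eqref{eq:121} gives $\phi'(s)=-\beta(1/s)\le0$ and $\phi''(s)=E''(1/s)/s^3\ge0$, so $\phi$ is convex and nonincreasing on $[0,+\infty)$ (McCann's condition in dimension one). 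Pointwise convexity of $\phi$ applied to $(Y^\vartheta)'=(1-\vartheta)(Y^0)'+\vartheta(Y^1)'$ and integration then give the displacement convexity of $\cE$, hence the displacement $\lambda$-convexity of $\cF$.

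For lower semicontinuity suppose $\rho_n\to\rho$ in $\PlusMeasuresTwo{\R,\mass}$, equivalently (by \eqref{eq:14}) narrow convergence together with $\int|x|^2\,\d\rho_n\to\int|x|^2\,\d\rho$. From \eqref{crescita-quadratica} one obtains $K>0$ with $V(x)+K(1+x^2)\ge 0$ continuous, so Fatou's lemma applied to the narrow convergence gives $\int(V+K(1+x^2))\,\d\rho\le\liminf_n\int(V+K(1+x^2))\,\d\rho_n$, while $\int K(1+x^2)\,\d\rho_n\to\int K(1+x^2)\,\d\rho$, proving the l.s.c.\ of $\cV$. For $\cE$ the recession coefficient $\lim_{r\to\infty}E(r)/r$ vanishes by \eqref{eq:32}, so the classical semicontinuity theorem for convex integral functionals on measures (cf.\ \cite{DT84}) applies with vanishing recession contribution on $\rho^\perp$ and gives $\cE(\rho)\le\liminf_n\cE(\rho_n)$; alternatively one can argue in the $Y$-representation that $Y_{\rho_n}\to Y_\rho$ in $L^2(0,\mass)$ and that $Y\mapsto\int_0^\mass\phi(Y'(w))\,\d w$ is l.s.c.\ on monotone $L^2$ functions by the convexity of $\phi$.

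For coercivity let $\rho\in\PlusMeasuresTwo{\R,\mass}$ satisfy $\Mom 2\rho\le C$. The quadratic lower bound on $V$ yields $\cV(\rho)\ge-K(\mass+C)$. From \eqref{eq:12} one has $E(r)\ge-C_1(-r\log r)$ on $[0,1]$ for some constant $C_1$ depending on $\beta'(0)$, and $E(r)\ge-\beta_\infty$ on $[1,+\infty)$. Since $\Leb 1(\{u\ge1\})\le\mass$ and $-r\log r\le\sqrt r$ for $r\in[0,1]$, Cauchy--Schwarz gives
\begin{equation*}
\int_{\{u<1\}}|u\log u|\,\d x\le\int_\R\sqrt u\,\d x=\int_\R\sqrt{u(1+x^2)}\,\frac{\d x}{\sqrt{1+x^2}}\le\sqrt{\pi(\mass+C)},
\end{equation*}
hence $\cE(\rho)\ge-\beta_\infty\mass-C_1\sqrt{\pi(\mass+C)}$, and adding the two estimates yields the required lower bound on $\cF$.

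The main obstacle is the lower semicontinuity of $\cE$: since $E$ is merely bounded, the standard ``superlinear growth at infinity'' hypothesis fails, and one must appeal to the convex-integral-functional theory in which a singular recession contribution appears (here harmless, since the recession coefficient vanishes by \eqref{eq:32}). The underlying analytic reason why $\cE$ is nonetheless well-behaved is the one-dimensional McCann computation $\phi''(s)=E''(1/s)/s^3\ge0$, which is the feature peculiar to $d=1$ that allows energy densities satisfying \eqref{eq:32} in the first place.
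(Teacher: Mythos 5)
Your treatment of displacement $\lambda$-convexity and of the lower semicontinuity of $\cE$ and $\cV$ is correct and follows the same route as the paper (in one dimension, convexity of $E$ is equivalent to displacement convexity of $\cE$ via the $\phi(s)=sE(1/s)$ computation; l.s.c.\ of $\cE$ via the convex-integral-functional theorem of \cite{DT84} with vanishing recession; l.s.c.\ of $\cV$ from the quadratic lower bound on $V$). You also go further and supply a proof of the coercivity bound, which the paper's own proof does not spell out.

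However, the coercivity argument contains a step that is false as written. You claim $E(r)\ge -C_1(-r\log r)$ on all of $[0,1]$. This cannot hold near $r=1$: by \eqref{eq:121}, $E'<0$ on $(0,\infty)$, so $E$ is strictly decreasing and $E(1)<E(0)=0$, while $-r\log r\to 0$ as $r\to 1^-$, and equals $0$ at $r=1$. Thus the inequality $E(1)\ge -C_1\cdot 0=0$ fails for every $C_1$. The behaviour in \eqref{eq:12} only controls $E$ near $r=0$; it gives no bound of the form $|E(r)|\le C_1(-r\log r)$ on an interval whose right endpoint is a zero of $-r\log r$. The fix is small and is in fact already hidden in your next line: replace the false bound by the correct one $E(r)\ge -C_1\sqrt r$ for $r\in[0,1]$, which follows since $\lim_{r\to0^+}E(r)/\sqrt r=\lim_{r\to0^+}\big(E(r)/(r\log r)\big)\cdot\big(\sqrt r\log r\big)=\beta'(0)\cdot 0=0$ and $|E|/\sqrt r$ is bounded on $[\delta,1]$. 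Then your Cauchy--Schwarz estimate on $\int_\R\sqrt u\,\d x$ applies directly without the intermediate passage through $-r\log r\le\sqrt r$, and the rest of the coercivity argument goes through unchanged.
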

\begin{proof}
Since $E$ is convex and sublinear, by \cite{DT84} it follows that $\cE$ is lower semicontinuous with respect to the narrow convergence. In the
one-dimensional case the convexity of $E$ is equivalent to the
displacement convexity.
The functional $\urho\mapsto\int_\R V(x)\,\d\urho(x)$ is
displacement $\lambda$-convex if and only if $V$ is $\lambda$-convex;
  it is also lower semicontinuous with respect to convergence in
  $\PlusMeasuresTwo{\R,\mass}$ since $V$ is continuous and
  quadratically bounded from below.
\end{proof}

\begin{definition}[Subdifferential and slope]
  \label{def:subdifferential}
  Let $\cG:\PlusMeasuresTwo{\R,\mass}\to (-\infty,+\infty]$ be
  a displacement $\lambda$-convex and lower semcontinuous functional,
  let $\urho^0\in \PlusMeasuresTwo{\R,\mass}$ with
  $\cG(\urho^0)<+\infty$ and $\xxi\in L^2(\urho^0)$.
  We say that $\xxi$ belongs to the $W_2$-subdifferential
  of $\cG$ at the point $\urho^0$, and we write $\xxi\in \partial\cG(\urho^0)$, if for every $\urho^1\in
  \PlusMeasuresTwo{\R,\mass}$ {the optimal coupling
    $\rrho_{\rm opt}$ between $\urho^0$ and $\urho^1$ satisfies}
  \begin{equation}
    \label{eq:40}
    \cG(\urho^1)-\cG(\urho^0)\ge
    \int_{\R\times\R}\Big(\xxi(x)(y-x) + \frac\lambda2|y-x|^2\Big)\,\d\rrho_{\rm opt}(x,y).
  \end{equation}
  $\partial\cG(\urho^0)$ is a closed convex (and possibly empty)
  subset of $L^2(\urho^0)$.
  When $\partial\cG(\urho^0)$ is not empty
  we denote by
  $\partial^\circ\cG(\urho^0)\in L^2(\rho^0)$ its (unique) element
  of minimal $L^2(\urho^0)$-norm.

  The (metric) slope of $\cG$ is defined as
  \begin{equation}
    \label{eq:42}
    |\partial\cG|(\urho^0)=\limsup_{W_2(\urho,\urho^0)\to0}
    \frac{\big(\cG(\urho^0)-\cG(\urho)\big)^+}{W_2(\urho,\urho^0)}
    =\sup_{\rho\neq\rho^0}
    \Big(
    \frac{\big(\cG(\urho^0)-\cG(\urho)\big)^+}{W_2(\urho,\urho^0)}+
    \frac \lambda2 W_2(\rho,\rho^0)\Big)^+.
  \end{equation}
\end{definition}
For general displacement $\lambda$-convex functionals, one has
\begin{equation}
  \label{eq:76}
  |\partial\cG|(\urho)\le \|\partial^\circ\cG(\rho)\|_{L^2(\rho)}.
\end{equation}
When the functional $\cG$ satisfies the regularity condition
\begin{equation}
  \label{eq:73}
  |\partial\cG|(\rho^0)<+\infty\quad \Rightarrow\quad
  \rho^0\ll\Leb1,
\end{equation}
then the metric slope \eqref{eq:42} can be equivalently characterized by
\begin{equation}
  \label{eq:41}
  |\partial\cG|^2(\urho^0):=\min\Big\{\int_\R |\xxi|^2\,\d\urho^0:\xxi\in \partial\cG(\urho^0)\Big\},
\end{equation}
where $|\partial\cG|(\urho^0)=+\infty$ iff
$\partial\cG(\urho^0)$ is empty.
In this case $|\partial\cG|(\urho^0)=\|\partial^\circ\cG(\urho^0)\|_{L^2(\urho^0)}$.

\subsection{Slope and Fisher dissipation in the super-linear case.}
Let us consider the perturbed family of energy densities
$E^\eps(r):=E(r)+\eps r\log r$
associated to the energy functionals
\begin{equation}
  \label{eq:400}
  \cF^\eps(\urho):=\int_\R E^\eps(u(x))\,\d x+\int_\R
  V^\eps(x)\,\d\urho(x)\quad\text{if } \urho=u\,\Leb 1;\quad
  \cF^\eps(\urho)=+\infty\quad\text{if }\urho\not\ll\Leb 1.
\end{equation}
Notice that $(rE^\eps)''(r)=\beta'(r)+\eps=(\beta^\eps)'(r)$, where $\beta^\eps$ is defined in \eqref{eq:16}.
Since $E^\eps$ has a super-linear growth, the slope $|\partial
\cF^\eps|$ can be explicitly characterized
\cite[Theorem 10.4.13]{ags} and it
coincides with the square root of the associated Fisher-dissipation
\begin{equation}
  \label{eq:46}
  \cI^\eps(\urho):=\int_\R\Big|\frac{\partial_x\beta^\eps(u)}u+(V^\eps)'\Big|^2u\,\d x \qquad\text{if }\urho=u\Leb1,\quad
  u\in W^{1,1}_{\rm loc}(\R).
\end{equation}
As usual $\cI^\eps(\urho)=+\infty$ if $u\not\in W^{1,1}_{\rm loc}(\R)$ or even $\urho\not\ll\Leb 1$. Thus we have
\begin{equation}
  \label{eq:47}
  |\partial \cF^\eps|^2(\urho)=\cI^\eps(\urho)
\end{equation}
and the minimal subdifferential $\xxi^\eps=\partial^\circ\cF^\eps(\urho)\in L^2(\urho)$ is characterized as
\begin{equation}
  \label{eq:48}
  \xxi^\eps\urho=\partial_x\beta^\eps(u)\Leb1+\urho\,(V^\eps)'\quad\text{if }\urho=u\Leb1\in D(\cI^\eps).
\end{equation}
The following compactness and lower semicontinuity property will play a crucial role in the sequel.

\begin{theorem}
  \label{thm:lsc-dissipation}
  If $\urho^\eps=u^\eps\,\Leb 1\in D(\cI_{\eps})$, $\eps>0$, with $u^\eps(x)>0$  for all $ x\in\R$,
  is a family of measures satisfying
  \begin{equation}\label{wconv}
    {\urho^\eps\weakto \urho\quad\text{weakly in
      $\PlusMeasures{\R,\mass}$ as $\eps\downarrow0,$}}\quad
    \limsup_{\eps\downarrow0} \cI^{\eps}(\urho^\eps)<+\infty,
  \end{equation}
  then we have
  \begin{equation*}
    \urho=u\,\Leb 1+\urho^\perp\in D(\cI)\subset \CPlusMeasuresTwo{\R,\mass},
  \end{equation*}
  \begin{equation}
    \label{eq:490}
    \cI(\urho)\le \liminf_{\eps\downarrow0}\cI^{\eps}(\urho^\eps),
  \end{equation}
  \begin{equation}\label{eq:219}
  u^\eps \text{ converges to } u \text{ uniformly on compact sets of } \Dom{u}.
  \end{equation}
  Moreover, if $\xxi^\eps = \partial^o\cF^\eps(\urho^\eps)$
  as in \eqref{eq:48}, we have
  \begin{equation}
    \label{eq:49}
    \xxi^\eps\urho^\eps \weakto \xxi\urho=\partial_x\beta(u)\Leb1+V'\urho,\quad\text{in
      the duality with }C^0_{\rm b}(\R).
  \end{equation}
  Finally, if $f:[0,+\infty)\to \R$ is a continuous function
  such that $\displaystyle\lim_{r\uparrow+\infty}\frac{f(r)}{r}=f_\infty \in\R$, then
  \begin{equation}
    \label{eq:65}
    f(u^\eps)\Leb 1\weakto f(u)\Leb1+f_\infty\urho^\perp\quad\text{in
      the duality with }C^0_{\rm c}(\R).
  \end{equation}
\end{theorem}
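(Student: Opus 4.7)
My plan rests on promoting the uniform bound on $\cI^\eps(\rho^\eps)$ to a modulus of continuity for $\beta(u^\eps)$ via Cauchy-Schwarz, and then invoking the Bouchitt\'e-Buttazzo lower semicontinuity theorem for the convex $1$-homogeneous integrand $(b,r)\mapsto b^2/r$ together with the weak limit of the fluxes. Since $(\beta^\eps)'(r)=\beta'(r)+\eps\ge\beta'(r)\ge 0$, we have $|\partial_x\beta(u^\eps)|\le|\partial_x\beta^\eps(u^\eps)|$ pointwise, hence
\[
\int_\R\frac{|\partial_x\beta(u^\eps)|^2}{u^\eps}\,\d x\ \le\ 2\,\cI^\eps(\rho^\eps)+2\int_\R|(V^\eps)'|^2\,\d\rho^\eps,
\]
with the last integral locally bounded thanks to \eqref{eq:85}. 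A Cauchy-Schwarz inequality then yields the key local modulus
\[
|\beta(u^\eps)(y)-\beta(u^\eps)(x)|^2\ \le\ C_K\,\rho^\eps([x,y])\qquad (x<y\text{ in a compact }K),
\]
with $C_K$ independent of $\eps$; coupled with $0\le\beta(u^\eps)\le\beta_\infty$ this gives equiboundedness plus a compactness modulus driven by $\rho^\eps$.

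A Helly-type extraction (after a diagonal argument) yields a subsequence with $\beta(u^\eps)\to w$ pointwise on $\R$ and in $L^1_{\loc}$, for some $w:\R\to[0,\beta_\infty]$. Passing to the limit in the modulus inequality gives $|w(y)-w(x)|^2\le C_K\,\rho([x,y])$ for $x,y$ outside the (countable) atomic set of $\rho$, so $w$ is continuous off atoms. Setting $u:=\beta^{-1}(w)$ with $\beta^{-1}(\beta_\infty):=+\infty$, at each atom $x_0$ of $\rho^\perp$ of mass $\alpha>0$ the narrow convergence forces $\rho^\eps$ to concentrate mass $\ge\alpha-o(1)$ in every shrinking neighbourhood; by averaging one finds $x^*_\eps$ nearby with $u^\eps(x^*_\eps)\to+\infty$, and applying the H\"older bound on each side of $x_0$ propagates this divergence to yield $w(x_0^\pm)=\beta_\infty$, so that $u$ is continuous into $[0,+\infty]$ and $\rho=u\,\Leb 1+\rho^\perp\in\CPlusMeasuresTwo{\R,\mass}$. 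On any compact of $\Dom u$ the limit $w$ stays strictly below $\beta_\infty$, so $\beta^{-1}$ is uniformly continuous there and the local convergence of $\beta(u^\eps)$ transfers to $u^\eps\to u$ uniformly, giving \eqref{eq:219}.

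For the fluxes, the signed measure $\mu^\eps:=\xxi^\eps\rho^\eps=\partial_x\beta^\eps(u^\eps)\,\Leb 1+(V^\eps)'\rho^\eps$ has total variation bounded by $\sqrt{\cI^\eps(\rho^\eps)\,\mass}$ and admits a weak-$*$ subsequential limit $\mu$. I split it: $(V^\eps)'\rho^\eps\weakto V'\rho$ in the duality with $C^0_{\mathrm b}$ by \eqref{eq:85} and $\rho^\eps\weakto\rho$, while $\partial_x\beta^\eps(u^\eps)=\partial_x\beta(u^\eps)+\eps\,\partial_xu^\eps$, where the first term converges distributionally to $\partial_xw=\partial_x\beta(u)$ via the $L^1_{\loc}$ convergence of $\beta(u^\eps)$ and the second is $\partial_x(\eps\rho^\eps)\to 0$ since $\eps\rho^\eps\to 0$ narrowly. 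This proves \eqref{eq:49} with $\mu=\xxi\rho=\partial_x\beta(u)\,\Leb 1+V'\rho$. The Bouchitt\'e-Buttazzo lower semicontinuity theorem applied to the pair $(\mu^\eps,\rho^\eps)$ then yields
\[
\int_\R\Bigl|\frac{\d\mu}{\d\rho}\Bigr|^2\,\d\rho\ \le\ \liminf_{\eps\downarrow0}\int_\R\Bigl|\frac{\d\mu^\eps}{\d\rho^\eps}\Bigr|^2\,\d\rho^\eps\ =\ \liminf_{\eps\downarrow0}\cI^\eps(\rho^\eps),
\]
and a Radon-Nikodym computation (identifying the density of $\mu$ with respect to $\rho$ as $\partial_x\beta(u)/u+V'$ on $\Domp u$ and as $V'$ on the singular part) shows the left-hand side equals $\cI(\rho)$, establishing \eqref{eq:490}.

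Finally, for \eqref{eq:65} I decompose $f(r)=g(r)+f_\infty r$ with $g(r):=f(r)-f_\infty r$ satisfying $g(r)/r\to 0$: the linear piece gives $f_\infty\rho^\eps\weakto f_\infty\rho=f_\infty u\,\Leb 1+f_\infty\rho^\perp$ directly, while for the sublinear $g$ the uniform convergence $u^\eps\to u$ on compacts of $\Dom u$ produces $g(u^\eps)\to g(u)$ uniformly there, and the growth $|g(u^\eps)|\le o(u^\eps)$ combined with the uniform $L^1$ bound $\|u^\eps\|_1=\mass$ kills the contribution from shrinking neighbourhoods of $\{u=+\infty\}$ when tested against any $\varphi\in C^0_{\mathrm c}(\R)$. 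The hardest step, I expect, is precisely the continuity-at-infinity claim for $u$ at atoms of $\rho^\perp$: the modulus inequality does not collapse across an atomic point, so the argument must blend it tightly with the mass-concentration principle (together with a localized use of $\cI^\eps$) in order to force $w\to\beta_\infty$ on both sides of every atomic point.
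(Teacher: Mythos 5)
There is a genuine gap in your compactness step, and it sits exactly where you anticipate trouble: the passage from a $\rho^\eps$-weighted modulus to continuity of the limit density across points where $\rho^\eps$ concentrates. Your Cauchy--Schwarz estimate yields
\[
|\beta(u^\eps)(y)-\beta(u^\eps)(x)|^2\le C_K\,\rho^\eps([x,y]),
\]
and this is simply not an equicontinuity estimate: when $\rho^\eps$ accumulates mass $\alpha$ inside a shrinking interval, the right-hand side is $\approx C_K\alpha$, which can be as large as $\beta_\infty^2$, so the bound is consistent with $\beta(u^\eps)$ jumping across the concentration zone. Nothing in your ``averaging + one-sided H\"older'' sketch rules out the scenario where $\rho^\eps$ places all of the emerging atom's mass on one side of its limit location, so that $u^\eps$ stays bounded on the other side and the limit $w=\lim\beta(u^\eps)$ has a genuine jump there. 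Also, the reduction to atoms is incomplete: $\rho^\perp$ need not be purely atomic, and you would need to handle singular-continuous concentration as well.

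The paper avoids this entirely by replacing $\beta$ with the auxiliary function $G(r)=\int_0^r \beta'(s)/\sqrt s\,\d s$ (and $G^\eps=G+2\eps\sqrt r$). The point is that $\partial_x G^\eps(u^\eps)=\partial_x\beta^\eps(u^\eps)/\sqrt{u^\eps}$ is precisely the square root of the $\cI^\eps$ integrand, so the Fisher-dissipation bound gives a \emph{uniform} $L^2_{\mathrm{loc}}(\R;\Leb1)$ bound on $\partial_x G^\eps(u^\eps)$, hence boundedness of $G^\eps(u^\eps)$ in $H^1_{\mathrm{loc}}$ and a genuine Lebesgue-weighted H\"older-$\tfrac12$ modulus $|G(u^\eps)(y)-G(u^\eps)(x)|^2\le C_K|y-x|$. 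This is impervious to concentration of $\rho^\eps$, gives Arzel\`a--Ascoli compactness and local uniform convergence $G(u^\eps)\to g$, and permits the clean definition $u:=G^{-1}(g)$ with $u$ continuous into $[0,+\infty]$. The price you pay for going directly through $\beta$ is that the modulus becomes measure-weighted, and that is not enough.

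The remaining steps in your sketch are essentially right and parallel the paper's: identification of the limit flux is the same (separate the $(V^\eps)'\rho^\eps$ term and the $\partial_x\beta(u^\eps)+\eps\partial_x u^\eps$ term, using $\|\eps u^\eps\|_{L^1}\to0$); the lower semicontinuity you attribute to Bouchitt\'e--Buttazzo is the same convex $1$-homogeneous joint lsc that the paper gets from \cite[Thm.~5.4.4]{ags}, and the argument for \eqref{eq:65} via $f=g+f_\infty r$ and equi-integrability of $g(u^\eps)$ is identical to the paper's. To repair your proof, introduce $G$ (or any primitive of $\beta'(r)/\sqrt r$) at the outset; once you have local uniform convergence of $G(u^\eps)$, the BV argument for $\beta(u^\eps)$ and the identification $\rho\restr{\Dom u}=u\Leb1$ follow as you wrote them.
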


\begin{proof}
Since $\displaystyle \cI^\eps (\urho^\eps) = \int_\R |\xxi^\eps|^2 \d \urho^\eps$, by \eqref{wconv} (see \cite[Theorem 5.4.4]{ags}) there exists $\xxi\in
L^2(\urho)$ such that
\begin{equation}\label{eq:401}
    \xxi^\eps\urho^\eps \weakto \xxi\urho,\quad\text{in
      the duality with }C^0_{\rm b}(\R),
\end{equation}
and
\begin{equation*}
   \int_\R |\xxi|^2 \d \urho\le  \liminf_{\eps \downarrow 0} \int_\R |\xxi^\eps|^2 \d \urho^\eps.
\end{equation*}
From \eqref{wconv} and \eqref{eq:85} it follows that
\begin{equation}\label{eq:402}
(V^\eps)'u^\eps \Leb1 \weakto V'\urho \quad \text{ in the duality with }C^0_{\rm c}(\R).
\end{equation}
Since by \eqref{eq:48}
\begin{equation}\label{eq:80}
\de_x\beta^\eps(u^\eps)\Leb1 = \xxi^\eps u^\eps\Leb1 - (V^\eps)'u^\eps\Leb1,
\end{equation}
\eqref{eq:401} and \eqref{eq:402} imply that
$$\de_x\beta^\eps(u^\eps)\Leb1 \weakto  \xxi \urho - V'\urho \quad\text{ in the duality with }C^0_{\rm c}(\R).$$
Let us now prove that $\urho\in \CPlusMeasuresTwo{\R,\mass}$, $\beta(u)\in W^{1,1}_\loc(\R)$ and $\de_x\beta(u)= \xxi \urho - V'\urho$.
\\
We introduce the functions
$$
G(r)=\int_0^r \frac{\beta'(s)}{\sqrt s}\,\d s, \qquad G^\eps(r)=G(r)+2\eps\sqrt r.
$$
Since $u^\eps \in W^{1,1}_\loc(\R)$, $u^\eps(x)>0$ and $G$ is locally Lipschitz in $(0,+\infty)$, we have
\begin{equation}\label{sss}
\de_xG^\eps(u^\eps) = \frac{\de_x(\beta^\eps(u^{\eps}))}{\sqrt{u^\eps}}.
\end{equation}
Let $I=(a,b)$ be an arbitrary bounded interval of $\R$. Since $\beta'(0^+) < +\infty$ we have that $G^\eps (r)\le M\sqrt r$, for some $M>0$.
Therefore
\begin{equation}\label{bound-11}
\sup_\eps \int_I\left|G^\eps(u^\eps)\right|^2 \,\d x < +\infty.
\end{equation}
By \eqref{eq:80} and \eqref{wconv} we have
\begin{equation} \label{bound-l2}
  \int_I\left|\frac{\de_x(\beta^\eps(u^{\eps}))}{\sqrt{u^\eps}}\right|^2 \d x =
  \int_I |\xxi^\eps - (V^\eps)'|^2u^\eps\d x \le
 2
 \int_I |\xxi^\eps|^2u^\eps\d x + 2
 \int_I |(V^\eps)'|^2u^\eps\d x
\end{equation}
so that
\begin{equation}
  \label{eq:84bis}
  \sup_{\eps>0}
  \int_I\left|\frac{\de_x(\beta^\eps(u^{\eps}))}{\sqrt{u^\eps}}\right|^2 \d x<+\infty.
\end{equation}
By \eqref{bound-11}, \eqref{eq:84bis} and \eqref{sss}, we infer that the family $\{G^\eps(u^\eps)\}_{\eps>0}$ is bounded in $H^1_\loc(\R)$. Thus,
for every sequence $\eps_j\to 0$ we can extract a sub-sequence, still denoted by $\{\eps_j\}$, such that $G_{\eps_j}(u_{\eps_j})$ converges weakly in $H^1_\loc(\R)$, and uniformly on the compact sets of $\R$,
to some continuous function $g \in H^1_\loc(\R)$.
Since
\begin{equation}\label{bvest}
  \sup_\eps\int_I |\de_x(\beta^\eps(u^{\eps}))|\,\d x=\sup_\eps\int_I | \xxi^\eps-(V^\eps)'|u^{\eps}\,\d x\le \sup_\eps\sqrt{\mass}\bigg(\int_I | \xxi^\eps-(V^\eps)'|^2u^{\eps}\,\d x\bigg)^\frac 1 2 <+\infty,
\end{equation}
and $\{\beta^\eps(u^\eps)\}_{\eps>0}$ is bounded in $L^1(\R)$,
the family  $\{\beta^\eps(u^\eps)\}_{\eps>0}$ is bounded in $L^\infty(I)$.
Therefore the family $\{\eps u^\eps = \beta^\eps(u^\eps)-\beta(u^\eps)\}_{\eps>0}$ is bounded in $L^\infty(I)$. Since $0\leq
G^\eps(u^\eps)-G(u^\eps)=2\sqrt{\eps} \sqrt{\eps u^\eps}$, we conclude that $G(u_{\eps_j})$ converges uniformly on the
compact sets of $\R$ to $g$, as $j\to +\infty$. The inequality
$$
 0\leq G \leq G_\infty=\int_0^{+\infty} \frac{\beta'(s)}{\sqrt s}\d s,
$$
together with the previous observations, gives $0\le g \le G_\infty$. Since $G$ is increasing and $G_\infty < +\infty$, we can define
the function
$$u(x):=\begin{cases}G^{-1}(g(x)) & \text{if } g(x)<G_\infty,\\
                     +\infty & \text{if } g(x)=G_\infty\end{cases}$$
which turns out to be continuous on the open set $\Dom u:=\{ x\in\R: g(x)<G_\infty\}$. Since $G(u_{\eps_j}) \to g$ uniformly on the compact
sets of $\R$, we have that $u_{\eps_j} = G^{-1}(G(u_{\eps_j}))\to u$ on the compact sets of $\Dom u$ and $u_{\eps_j}(x)\to+\infty$ for every $x\in\R\setminus
\Dom u$. By Fatou's Lemma we obtain that $u\in L^1(\R)$ and $\Leb{1}(\R\setminus \Dom u)=0$.
For every $\psi\in C^0_{\rm c}(\Dom u)$, using (\ref{wconv}) we have
$$
\lim_{j\to+\infty} \int_\R\psi(x)\,\d\urho_{\eps_j}=  \int_\R\psi(x)\,\d\urho =\int_\R\psi(x)u(x)\,\d x.
$$
Thus
\begin{equation} \label{eq:decomp}
\urho_{|\Dom u}=u\Leb1  \quad \text{ and }\quad  \urho_{|\R\setminus \Dom u}= \urho^\perp.
\end{equation}
This shows that $\urho\in \CPlusMeasuresTwo{\R,\mass}$. Moreover, we deduce that the whole family $u^\eps$ converges to $u$ uniformly on compact sets of $\Dom u$, as $\eps \downarrow 0$.

For any  bounded interval $I=(a,b)$, we have proved that $\{\beta^\eps(u^\eps)\}_{\eps>0}$ is bounded in $W^{1,1}(I)$.
Then, by ${\rm BV}$ compactness (see e.g. \cite{afp}) there exists $h\in {\rm BV}_{\rm loc}(\R)$ such that, up to subsequences as before, $\beta^\eps(u^\eps)\to h$ in $L^1_{\rm loc}(\R)$ and $\Leb{1}$-a.e. and $\partial_x\beta^\eps(u^\eps)\Leb{1}  \weakto \partial_x h$ in duality with $C_{\rmc}^0(\R)$.
Since $ 0 \le \beta^\eps (u^\eps) - \beta(u^\eps) = \eps u^\eps$ and $\eps u^\eps(x) \to 0$ pointwise in $\Dom u$, we have that $\beta(u^\eps) \to h$, $\Leb{1}$-a.e.
On the other hand, by the continuity of $\beta$, $\beta(u^\eps) \to \beta(u)$ $\Leb{1}$-a.e. Hence $h=\beta(u)$.
Moreover, by using \eqref{eq:80}, it is easy to see that $\partial_x\beta(u)\Leb1= \xxi \urho - V'\urho$.
The last identity and \eqref{eq:decomp} yield $\beta(u)\in {\rm BV}_{\rm loc}(\R) \cap W^{1,1}_\loc(\Dom u)$.

Finally, we prove that $\beta(u)\in W^{1,1}_\loc(\R)$ and $\de_x(\beta(u))=\de_x(\beta(u))_{|\Dom u}$.
Since $\Dom u$ is open, we can write
$$
\Dom u=\bigcup_{n\in\N} (a_n,b_n)
$$
where the intervals are pairwise disjoint;
recalling that $\beta(u(a_n))=\beta(u(b_n))=\betainfty$, we have for every $\zeta\in C_{\rm c}^\infty(\R)$
\begin{align*}
\int_\R \zeta'\beta(u)\,\d x&=\int_{\Dom u}
\zeta'\,\beta(u)\,\dx+{\int_{\R\setminus \Dom
    u}\zeta'\,\beta(u)\,\dx}=\sum_n\int_{a_n}^{b_n}
\zeta'\beta(u)\,\dx
{+\betainfty\int_{\R\setminus\Dom u}\zeta'\,\dx}\\
&=\sum_n\bigg(-\int_{a_n}^{b_n}
\zeta\,\de_x(\beta(u))\,\dx+\big(\zeta({b_n})-\zeta({a_n})\big)\betainfty\bigg)
{+\betainfty\int_{\R\setminus\Dom u}\zeta'\,\dx}\\
&=-\int_{\Dom u} \zeta\,\de_x(\beta(u))\,\dx+\sum_n\betainfty
\int_{a_n}^{b_n} \zeta'\,\dx
{+\betainfty\int_{\R\setminus\Dom u}\zeta'\,\dx}\\
&=-\int_{\Dom u} \zeta\,\de_x(\beta(u))\,\dx+\betainfty \int_\R \zeta'\,\dx
=-\int_{\Dom u} \zeta\,\de_x(\beta(u))\,\dx .
\end{align*}

We eventually prove \eqref{eq:65}.
  By possibly substituting $f(r)$ with $f(r)-f_\infty r$ it is not
  restrictive to assume $f_\infty=0$, i.e.
  \begin{equation}
    \label{eq:74}
    \lim_{r\to+\infty}\frac{f(r)}r=0\quad\text{or, equivalently,}\quad
    \forall\,\eta>0\ \exists\, M_\eta:\quad
    |f(r)|\le M_\eta+\eta r\quad\text{for every }r\ge0.
  \end{equation}
  Property \eqref{eq:74} easily shows that the family
  $\{f(u^\eps)\}_{\eps>0}$ is equi-integrable in $\R$:
  for every $\delta>0$ and choosing $\eta:= \delta/2\mass $,
  every Borel set $A$ with
  measure $\Leb1(A)\le \delta/2M_\eta$ satisfies
  \begin{equation}
    \label{eq:75}
    \int_A |f(u^\eps(x))|\,\d x\le \int_A \Big(M_\eta +\eta
    u_\eps(x)\Big)\,\d x\le M_\eta\, \Leb 1(A)+\eta\, \mass\le
    \delta\quad
    \text{for every }\eps>0.
  \end{equation}
  The previous equi-integrability estimate and the tightness of
  $\rho^\eps$ given by \eqref{wconv} show that the family
  $f(u^\eps)$ is weakly
  compact in $L^1(\R)$. On the
  other hand, $f(u^\eps)\to f(u)$ locally uniformly in $\Dom u$. Since
  $\Leb1(\R\setminus\Dom u)=0$ it follows that $f(u)$ is also the weak
  limit of $f(u^\eps)$ in $L^1(\R)$.

\end{proof}
By a similar and even simpler argument it is possible to prove the following lower semi
continuity result for the Fisher dissipation
  $\cI$ with respect to weak convergence. Lower semicontinuity with
  respect to Wasserstein convergence will follow
  by \eqref{th:charsubdiff} and the representation
  \eqref{eq:42} of the metric slope for a displacement
  $\lambda$-convex functional \cite[Corollary 2.4.10]{ags}.

\begin{theorem}[Lower semi continuity of $\cI$]
\label{thm:lsc-dissipation2}
  If $\urho_n=u_n\,\Leb 1 +\urho^\perp_n \in D(\cI)$
  is a sequence of measures weakly convergent to a measure $\urho$ and satisfying
  \begin{equation}\label{upbound2}
    \limsup_{n\to+\infty} \cI(\urho_n)<+\infty,
  \end{equation}
  then we have
  \begin{equation}\label{eq:4902}
    \urho=u\,\Leb 1+\urho^\perp\in D(\cI)\subset \CPlusMeasuresTwo{\R,\mass},
    \qquad
    \cI(\urho)\le \liminf_{n\to+\infty}\cI(\urho_n).
  \end{equation}
  Moreover
  \begin{equation}\label{eq:2190}
  u_{n} \text{ converges to } u \text{ uniformly on compact sets of } \Dom{u}.
  \end{equation}
\end{theorem}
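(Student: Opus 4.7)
The strategy is to mimic the proof of Theorem~\ref{thm:lsc-dissipation}, exploiting the simplifications afforded by the absence of a viscous regularization.

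First, I associate to each $\urho_n$ the vector field $\xxi_n\in L^2(\urho_n)$ defined by $\xxi_n:=\partial_x\beta(u_n)/u_n+V'$ on $\Domp{u_n}$ and $\xxi_n:=V'$ on the singular support, so that $\cI(\urho_n)=\int_\R|\xxi_n|^2\,\d\urho_n$ and, as vector measures on $\R$,
\begin{equation}
\label{eq:id-plan}
\xxi_n\urho_n=\partial_x\beta(u_n)\,\Leb1+V'\urho_n.
\end{equation}
Combining \eqref{upbound2} with the weak convergence $\urho_n\weakto\urho$, the compactness result \cite[Theorem~5.4.4]{ags} yields (up to a subsequence) $\xxi\in L^2(\urho)$ with $\xxi_n\urho_n\weakto\xxi\urho$ in duality with $C_b^0(\R)$ and $\int|\xxi|^2\,\d\urho\le\liminf_n\int|\xxi_n|^2\,\d\urho_n$.

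To identify $\urho$, I introduce $G(r):=\int_0^r\beta'(s)/\sqrt s\,\d s$; the hypothesis \eqref{hp:beta} ensures $G_\infty:=G(+\infty)<+\infty$, so $G$ is an increasing homeomorphism of $[0,+\infty]$ onto $[0,G_\infty]$. A test-function computation on the connected components of the open set $\Pos{u_n}$---where the boundary contributions vanish thanks to $G(0)=0$---shows that $G(u_n)\in H^1_{\rm loc}(\R)$ with weak derivative $\partial_x\beta(u_n)/\sqrt{u_n}$ on $\Pos{u_n}$ (and $0$ elsewhere), and from $\partial_x\beta(u_n)=(\xxi_n-V')u_n$ on $\Domp{u_n}$ we deduce the bound
\begin{equation*}
\int_I|\partial_x G(u_n)|^2\,\d x\le 2\,\cI(\urho_n)+2\int_I|V'|^2 u_n\,\d x
\end{equation*}
on any bounded interval $I$. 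Together with $0\le G(u_n)\le G_\infty$ this makes $\{G(u_n)\}$ bounded in $H^1_{\rm loc}(\R)\cap L^\infty(\R)$; up to a subsequence $G(u_n)\to g$ weakly in $H^1_{\rm loc}$ and uniformly on compact subsets of $\R$. Setting $u:=G^{-1}(g)$ on $\Dom u:=\{g<G_\infty\}$ and $u:=+\infty$ elsewhere produces a continuous map $\R\to[0,+\infty]$ with $u_n\to u$ uniformly on compact subsets of $\Dom u$ and $u_n\to+\infty$ on $\R\setminus\Dom u$. This proves \eqref{eq:2190}, and Fatou's lemma together with $\urho_n\weakto\urho$ identifies $\urho=u\,\Leb 1+\urho^\perp\in\CPlusMeasuresTwo{\R,\mass}$ with $\urho^\perp$ concentrated on the $\Leb 1$-null set $\R\setminus\Dom u$.

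To pass to the limit in \eqref{eq:id-plan}, the $L^1$ estimate $\int_I|\partial_x\beta(u_n)|\,\d x\le\sqrt{\mass}\,\|\xxi_n-V'\|_{L^2(\urho_n)}$ shows $\{\beta(u_n)\}$ is bounded in $BV_{\rm loc}(\R)$, while dominated convergence---using that $\beta$ is continuous and bounded on $[0,+\infty]$ with $\beta(+\infty)=\betainfty$ and that $u_n\to u$ pointwise everywhere---yields $\beta(u_n)\to\beta(u)$ in $L^1_{\rm loc}$. Reproducing verbatim the final integration-by-parts computation from the proof of Theorem~\ref{thm:lsc-dissipation}, where the common value $\betainfty$ attained by $\beta\circ u$ at the endpoints of the connected components of $\Dom u$ makes the boundary contributions telescope, identifies $\beta(u)\in W^{1,1}_{\rm loc}(\R)$ and $\partial_x\beta(u_n)\Leb1\weakto\partial_x\beta(u)\Leb1$ in duality with $C_c^0(\R)$. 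Since $V'\urho_n\weakto V'\urho$ in the same duality (by continuity of $V'$ and the weak convergence of $\urho_n$), passing to the limit in \eqref{eq:id-plan} gives $\xxi\urho=\partial_x\beta(u)\Leb1+V'\urho$. Separating absolutely continuous and singular parts then yields $\xxi=\partial_x\beta(u)/u+V'$ on $\Domp u$ and $\xxi=V'$ $\urho^\perp$-almost everywhere, whence $\cI(\urho)=\int_\R|\xxi|^2\,\d\urho\le\liminf_n\cI(\urho_n)$. Since the limit $\urho$ is uniquely determined, the inequality holds along the full sequence. The main obstacle is the last step: establishing that the limit of $\partial_x\beta(u_n)\Leb 1$ is itself absolutely continuous with respect to $\Leb 1$ on all of $\R$, in spite of the potentially nonempty set where $u=+\infty$. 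The telescoping of the boundary terms (a consequence of the constant value $\betainfty$ of $\beta\circ u$ on $\R\setminus\Dom u$) is precisely what prevents a spurious singular contribution from appearing in the weak-$\ast$ limit.
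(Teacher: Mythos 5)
Your plan is a faithful carrying-out of the paper's (unstated) proof: the text merely remarks that Theorem~\ref{thm:lsc-dissipation2} follows ``by a similar and even simpler argument'' from Theorem~\ref{thm:lsc-dissipation}, and you reproduce precisely that chain — compactness of $\xxi_n\urho_n$ via \cite[Thm.~5.4.4]{ags}, the $H^1_{\rm loc}$ bound on $G(u_n)$, identification of the continuous density $u$ through $G^{-1}$ of the uniform limit $g$, $BV$ compactness of $\beta(u_n)$, and finally the integration-by-parts telescoping over the components of $\Dom u$ to show $\partial_x\beta(u)$ has no singular part. The one genuinely new point you address (correctly, and which Theorem~\ref{thm:lsc-dissipation} avoids by assuming $u^\eps>0$) is the possible vanishing of $u_n$.

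Two small refinements would make that step airtight. First, the open set on which the chain-rule computation is legitimate is $\Domp{u_n}=\Pos{u_n}\cap\Dom{u_n}$, not $\Pos{u_n}$: the latter contains the (Lebesgue-null but nonempty) set where $u_n=+\infty$, on which $u_n\notin W^{1,1}_{\rm loc}$, so the ``computation on connected components of $\Pos{u_n}$'' is not quite meaningful as stated. This is harmless because the bad set is $\Leb1$-null, but the statement should be adjusted. Second, rather than appealing to vanishing boundary contributions, the cleanest way to certify $G(u_n)\in H^1_{\rm loc}(\R)$ from $\beta(u_n)\in W^{1,1}_{\rm loc}(\R)$ is to observe that $L:=G\circ\beta^{-1}$ is $C^1$ up to $\betainfty$ (with $L'(\betainfty)=0$) but has $L'$ blowing up at $0$; so introduce the Lipschitz truncations $L^\delta(\sigma):=L(\max(\sigma,\delta))-L(\delta)$, note that $L^\delta(\beta(u_n))\in W^{1,1}_{\rm loc}(\R)$ with derivative $\partial_x\beta(u_n)/\sqrt{u_n}\cdot\mathbf 1_{\{u_n>\beta^{-1}(\delta)\}}$, and let $\delta\downarrow0$ using the uniform $L^2_{\rm loc}$ bound you already derived. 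With these touches the argument coincides with the intended one and is correct.
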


\subsection{Characterization of the Wasserstein subdifferential of $\cF$}

\begin{theorem}[Characterization of $\de\cF$]\label{th:charsubdiff}
Let $\urho = u\Leb{1}+\urho^\perp\in \PlusMeasuresTwo{\R,\mass}$ with $\cF(\urho)<+\infty$ and
$\xxi\in L^2(\urho)$.\\
$\xxi=\de^o\cF(\urho)$ (and, in particular, $\partial\cF(\urho)$ is not empty) if and only if
\begin{equation}
  \label{eq:50}
  \urho\in \CPlusMeasuresTwo{\R,\mass}, \quad \cI(\urho)<+\infty,
  \quad
  \xxi\urho=\de_x\beta(u)\Leb{1}+V'\urho.
\end{equation}
In this case
\begin{equation}
  \label{eq:44}
  |\partial\cF|^2(\urho)=\int_\R |\xxi|^2\,\d\urho=\cI(\urho).
\end{equation}
\end{theorem}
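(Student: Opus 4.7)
\begin{Proof}[Plan of proof]
The strategy is to compare the functional $\cF$ with its superlinear regularizations $\cF^\eps$ of \eqref{eq:400}, for which \cite[Theorem 10.4.13]{ags} already gives a complete characterization of the Wasserstein subdifferential in the form \eqref{eq:47}--\eqref{eq:48}, and then to pass to the limit $\eps\downarrow0$ using the key compactness and semicontinuity statement of Theorem \ref{thm:lsc-dissipation}. The two implications will be handled separately, and the norm identity \eqref{eq:44} will drop out of the combination of the two.

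\emph{Sufficiency.} Assume \eqref{eq:50} and fix any $\urho^1\in \PlusMeasuresTwo{\R,\mass}$ with $\cF(\urho^1)<+\infty$. The plan is to construct two approximating families $\urho^\eps\to\urho$ and $\urho^{1,\eps}\to\urho^1$ in $W_2$, with $u^\eps,u^{1,\eps}>0$ smooth, such that
\[
\cF^\eps(\urho^\eps)\to\cF(\urho),\quad \cF^\eps(\urho^{1,\eps})\to\cF(\urho^1),\quad \cI^\eps(\urho^\eps)\to\cI(\urho).
\]
The construction of $\urho^\eps$ is the delicate point: one mollifies the absolutely continuous part and replaces each concentrated piece of $\urho^\perp$ by a narrow Gaussian of equal mass, adding a faint Gaussian tail in order to keep $u^\eps$ strictly positive and $\eps\int u^\eps\log u^\eps$ under control. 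Since $\beta$ is bounded, the dominant contribution of the narrow Gaussians to $\cI^\eps$ comes from the potential term and converges precisely to $\int |V'|^2\,\d\urho^\perp$, matching the singular contribution appearing in the definition \eqref{eq:5} of $\cI(\urho)$. Once the two recovery families are in place, I apply the subdifferential inequality for $\cF^\eps$ at $\urho^\eps$, tested against $\urho^{1,\eps}$, with $\xxi^\eps=\partial^\circ\cF^\eps(\urho^\eps)$ given by \eqref{eq:48}:
\[
\cF^\eps(\urho^{1,\eps})-\cF^\eps(\urho^\eps)\ge \int_{\R\times\R}\xxi^\eps(x)(y-x)\,\d\rrho^\eps_{\rm opt}(x,y)+\tfrac\lambda2 W_2^2(\urho^\eps,\urho^{1,\eps}).
\]
The LHS converges to $\cF(\urho^1)-\cF(\urho)$ and the quadratic term to $\tfrac\lambda2 W_2^2(\urho,\urho^1)$. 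For the pairing term, the stability of optimal couplings in $\PlusMeasuresTwo{\R\times\R,\mass}$, the vector-measure convergence $\xxi^\eps\urho^\eps\weakto \xxi\urho$ in $C^0_b$-duality given by \eqref{eq:49}, and uniform second-moment control yield convergence to $\int(y-x)\xxi(x)\,\d\rrho_{\rm opt}$. This establishes $\xxi\in\partial\cF(\urho)$ and, via \eqref{eq:76}, the bound $|\partial\cF|^2(\urho)\le \|\xxi\|_{L^2(\urho)}^2=\cI(\urho)$.

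\emph{Necessity.} Conversely, assume $\xxi=\partial^\circ\cF(\urho)$. I must establish $\urho\in\CPlusMeasuresTwo{\R,\mass}$, $\cI(\urho)<+\infty$ and the explicit form of $\xxi\urho$. The core estimate is $\cI(\urho)\le |\partial\cF|^2(\urho)<+\infty$. To obtain it I approximate $\urho$ through the variational scheme: for $\tau>0$ and $\eps=\eps(\tau)\downarrow0$ suitably slowly, let
\[
\urho_\tau^\eps:=\mathop{\mathrm{argmin}}\Big\{\tfrac{1}{2\tau}W_2^2(\urho,\eta)+\cF^{\eps}(\eta):\eta\in\PlusMeasuresTwo{\R,\mass}\Big\}.
\]
The standard first-order optimality condition (\cite[Lemma 10.3.4]{ags}) together with the $\lambda$-convexity of $\cF^\eps$ and the fact that $\xxi\in\partial\cF(\urho)$ produce an element $\xxi^\eps_\tau\in\partial\cF^\eps(\urho_\tau^\eps)$ whose $L^2(\urho_\tau^\eps)$ norm is bounded by $|\partial\cF|(\urho)+o(1)$ and such that $\urho_\tau^\eps\to\urho$ in $W_2$. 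Applying the identity \eqref{eq:47} at $\urho_\tau^\eps$ yields $\cI^\eps(\urho_\tau^\eps)\le |\partial\cF|^2(\urho)+o(1)$, and the lower semicontinuity of $\cI$ (Theorem \ref{thm:lsc-dissipation}) gives $\cI(\urho)\le |\partial\cF|^2(\urho)$. In particular $\urho\in\CPlusMeasuresTwo{\R,\mass}$, so the sufficiency step applies and produces some $\tilde\xxi\in\partial\cF(\urho)$ with $\tilde\xxi\urho=\partial_x\beta(u)\Leb1+V'\urho$ and $\|\tilde\xxi\|_{L^2(\urho)}^2=\cI(\urho)\le|\partial\cF|^2(\urho)\le\|\xxi\|_{L^2(\urho)}^2$. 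The minimality of $\xxi$ forces $\xxi=\tilde\xxi$, giving simultaneously the representation and the identity $|\partial\cF|^2(\urho)=\int|\xxi|^2\,\d\urho=\cI(\urho)$.

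\emph{Main obstacle.} The hardest part is the joint recovery construction in the sufficiency step: one must exhibit an approximation $\urho^\eps$ that converges in $W_2$ and recovers \emph{both} $\cF$ and the full Fisher dissipation $\cI$, including the singular piece $\int|V'|^2\,\d\urho^\perp$. Producing a Gaussian-type smearing of the singular part whose $|\partial_x\beta^\eps(u^\eps)/u^\eps+(V^\eps)'|^2 u^\eps$-integral converges to exactly $|V'|^2$ times the atomic mass, while simultaneously keeping the entropy term $\eps\int u^\eps\log u^\eps$ bounded, is the technical heart of the argument and relies essentially on the boundedness of $\beta$.
\end{Proof}
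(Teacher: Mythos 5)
The necessity half of your argument tracks the paper closely: the paper also constructs approximants $\urho_\eps\to\urho$ with $\xxi^\eps\in\partial\cF^\eps(\urho_\eps)$ and $\lim_\eps\cI^\eps(\urho_\eps)=|\partial\cF|^2(\urho)$, by combining Lemma \ref{le:Gamma-convergence} with the abstract recovery statement \cite[Lemma 10.3.16]{ags}, then invokes Theorem \ref{thm:lsc-dissipation}. Your JKO packaging of this step is fine. The genuine problem is in your sufficiency half, where you deviate from the paper, and where there is a gap.

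The paper proves sufficiency \emph{directly}: assuming \eqref{eq:50}, it tests the subdifferential inequality against a smooth compactly supported $\urho^1$, writes $\cE(\urho^\vartheta)$ explicitly via the transport map $\rr$, differentiates the convex map $\vartheta\mapsto\cE(\urho^\vartheta)$ at $\vartheta=0$, and then carries out a careful integration by parts using cutoff functions $\psi_k$, exploiting $\beta\le\betainfty<\infty$ to control the boundary terms produced by the holes of $\Dom u$. No regularization is involved. You instead propose to run the subdifferential inequality for $\cF^\eps$ at a recovery family $\urho^\eps\to\urho$ and pass $\eps\downarrow0$. For this to go through you need (at minimum) a family $\urho^\eps=u^\eps\Leb1$, $u^\eps>0$, with $\limsup_\eps\cI^\eps(\urho^\eps)<+\infty$ — you actually ask for the stronger $\cI^\eps(\urho^\eps)\to\cI(\urho)$ — together with $\limsup_\eps\cF^\eps(\urho^\eps)\le\cF(\urho)$. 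Neither Lemma \ref{le:Gamma-convergence} nor Theorem \ref{thm:lsc-dissipation} asserts such a joint recovery; the paper only obtains one \emph{a posteriori} via \cite[Lemma 10.3.16]{ags}, which requires knowing $|\partial\cF|(\urho)<\infty$, i.e.\ precisely what sufficiency is meant to establish. So as written your argument is circular unless you actually exhibit the recovery.

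Your proposed construction — Gaussian smearing of each atom plus a faint tail — does not obviously meet the Fisher bound and, I believe, cannot be made to work as described. If a Dirac of mass $\alpha$ is replaced by a bump rising from a \emph{bounded} background to level $R$ over width $\ell$, then introducing $G^\eps(r)=\int_0^r(\beta^\eps)'(s)/\sqrt s\,\d s$ one finds
\[
\int_{\rm bump}\frac{|\partial_x\beta^\eps(u^\eps)|^2}{u^\eps}\,\d x
=\int_{\rm bump}|\partial_x G^\eps(u^\eps)|^2\,\d x\ \gtrsim\ \frac{\big(G^\eps(R)-G^\eps(\text{bg})\big)^2}{\ell},
\]
and since $G^\eps(R)-G^\eps(\text{bg})$ stays of order $G_\infty=\int_0^\infty\beta'(s)/\sqrt{s}\,\d s>0$ as $R\to\infty$, this blows up as $\ell\downarrow0$, whatever the coupling between $\eps$ and $\ell$. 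The only way to tame this cost is to exploit that for $\urho\in\CPlusMeasures\R$ the regular density already satisfies $u(x)\to+\infty$ as $x$ approaches the support of $\urho^\perp$, so that $G(u(\delta))$ is already close to $G_\infty$ and the bump needs to supply only a small increment of $G$. That mechanism is not a Gaussian-type smearing, the analysis of the resulting transition layers is delicate (the widths must be dynamically coupled to $G_\infty-G(u(\delta))$), and it is not sketched in your argument. You would also need to treat non-atomic singular parts, which your construction does not address. In short, the ``main obstacle'' you flag at the end is indeed the proof — the boundedness of $\beta$ is necessary but far from sufficient — and the paper deals with it by abandoning regularization altogether for this implication and proving the subdifferential inequality by a bare-hands computation.
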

\begin{proof}
Let us first suppose that $\urho,\xxi$ satisfy \eqref{eq:50} and let
us prove that $\xxi\in \partial\cF(\urho)$, i.e.\ \eqref{eq:40} holds with
$\urho^0:=\urho$;
  in particular, recalling \eqref{eq:76}, this also shows that
  \begin{equation}
    \label{eq:77}
    |\partial\cF|^2(\rho)\le \int_\R |\xxi|^2\,\d\rho=\cI(\rho)<+\infty.
  \end{equation}

It is not restrictive to assume $\lambda=0$. By a standard regularization and stability of the optimal couplings with respect to
weak convergence, we can also suppose that $\urho^1=u^1\,\Leb1$ with $u^1\in C^1(\R)$ supported in the bounded interval $[a,b]$ with $u^1(x)>0$
for every $x\in (a,b)$. In this case $M_{\urho^1}\in C^2(\R)$, the monotone rearrangement map $Y_{\urho^1}\in C^0([0,\mass])$ satisfies
$Y_{\urho^1}(0)=a,\ Y_{\urho^1}(\mass)=b$ and its restriction to $(0,\mass)$ is of class $C^2$. We set
\begin{equation}
  \label{eq:51}
  \left\{
    \begin{aligned}
      \rr(x):=&Y_{\urho^1}(M_\urho(x)),\ \rr^\vartheta(x):=(1-\vartheta)x+\vartheta\rr(x)\\
      \ss(y):=&Y_\urho(M_{\urho^1}(y)),\ \ss^{\vartheta}(y):=\vartheta y+(1-\vartheta)\ss(y)
    \end{aligned}
\right.
\quad\text{for
    every }x,y\in \R,\ \vartheta\in [0,1],
\end{equation}
and we observe that $\rr^\vartheta\restr{\Dom u}\text{ is $C^1$.}$
We introduce the sets
\begin{equation}
  \label{eq:56}
  \sfD:=\Dom u,\quad \sfD_>:=\{x\in \Dom u:u(x)>0\},\quad
  \stD:=\R\setminus \sfD,\quad
  \sfG:=\rr(\sfD)=\rr(\sfD_>),\quad
  \tilde\sfG:=(a,b)\setminus \sfG,
\end{equation}
and we have
\begin{gather}
  \label{eq:52}
  (\rrho_{\rm opt})\restr{\sfD\times\R}=(\ii\times\rr^1)_\# (u\Leb
  1)=
  (\ss^0\times \ii)_\#(u^1\Leb1\restr\sfG)
  ,\quad
  (\rrho_{\rm opt})\restr{\stD\times\R}=
  (\ss^0\times \ii)_\#(u^1\Leb1\restr{\tilde\sfG})
 \\
 \urho^\vartheta\restr{\srr^\vartheta(\sfD)}=\rr^\vartheta_\# (u\Leb 1),\quad
 \urho^\vartheta\restr{\R\setminus\srr^\vartheta(\sfD)}=\ss^{\vartheta}_\# (u^1\Leb 1\restr{\tilde\sfG}),\quad
 \\
  \label{eq:53}
  u^\vartheta(\rr^\vartheta(x))(\rr^\vartheta)'(x)=u(x),\quad
  u^\vartheta(\ss^\vartheta(y))(\ss^\vartheta)'(y)=u^1(y)\quad\text{for every }x\in \sfD,
  \ y\in (a,b).
\end{gather}
Since $(\ss^0)'(y)=0$ for every $y\in \tilde G$
\begin{equation}
  \label{eq:55}
  \cE(\urho^\vartheta)=\int_{\sfD_>}E\Big(\frac{u(x)}{(1-\vartheta)+\vartheta\rr'(x)}\Big)(1-\vartheta+\vartheta\rr'(x))\,\d
  x+\int_{\tilde \sfG}E\Big(\frac{u^1(y)}{\vartheta}\Big)\vartheta\,\d y.
\end{equation}
Therefore, owing to the convexity of the maps $\vartheta\mapsto
\cE(\urho^\vartheta)$
{and $s\mapsto sE(\alpha/s)$ for every $\alpha\ge0$,}
\begin{align*}
  +\infty>\cE(\urho^1)-\cE(\urho)\ge\lim_{\vartheta\downarrow0}\vartheta^{-1}\Big(\cE(\urho^\vartheta)-\cE(\urho)\Big)=
  -\int_{\sfD}\beta(u)(\rr'-1)\,\d x-\betainfty\Leb 1(\tilde \sfG).
\end{align*}
Let us now choose two sequences $z^-_{k}\to-\infty$, $z^+_k\to +\infty$ in $\sfD$, let $(a_k^-,b_k^-) $. Let $(a_k^+,b_k^+)$ be  the connected
component of $\sfD$ containing $z_k^-$ and $z_k^+$ respectively, and let $I_k^n:=(a_k^n,b_k^n)$, $n\in \Lambda_k\subset \N$ be the (at most
countable) connected components of $\sfD\cap (b_k^-,a_k^+)$. We consider a continuous function $\psi_k:\R\to [0,1]$ satisfying
\begin{equation}
  \label{eq:57}
  \psi_k(x)=0\text{ in }\R\setminus [z_k^-,z_k^+],
  \quad \psi_k(x)\equiv 1\text{ if }x\in [\tfrac
  12(z_k^-+b_k^-),\tfrac 12 (z_k^++a_k^+)],\quad
  \psi_k\restr{[z_k^-,z_k^+]}\text{ is concave.}
\end{equation}
For sufficiently big $k$ we have $\psi_k\equiv 1$ on $(a,b)$. Then
\begin{equation}
  \label{eq:60}
  \begin{aligned}
  \beta(u(x))(\rr(x)-x)\psi_k'(x) &\ge
  \beta(u(x))\big(\psi_k(\rr(x))-\psi_k(x)) \\ &\ge
  \beta(u(x))(1-\psi_k(x))\ge0\quad\text{for every }x\in [z_k^-,z_k^+];
    \end{aligned}
\end{equation}
\begin{equation}
  \label{eq:61}
  -\betainfty\Leb 1(\tilde \sfG)=
  \lim_{k\to\infty}\Leb 1(\tilde\sfG\cap(\rr(b_k^-),\rr(a_k^+))).
\end{equation}
Moreover
\begin{equation}
  \label{eq:58}
  -\int_{\sfD}\beta(u)(\rr'-1)\,\d x=\lim_{k\uparrow+\infty}
  -\int_{\sfD}\beta(u)(\rr'-1)\,\psi_k(x)\,\d x
\end{equation}
and
\begin{align*}
  -\int_{\sfD}&\beta(u)(\rr'-1)\,\psi_k(x)\,\d x\ge
  \int_{a_k^+}^{z_k^+}\partial_x \beta(u)\,(\rr(x)-x)\psi_k(x)\,\d x
  +
  \int_{z_k^-}^{b_k^-}\partial_x \beta(u)\,(\rr(x)-x)\psi_k(x)\,\d x
  \\&+
  \sum_{n\in \Lambda_k} \int_{a_k^n}^{b_k^n}\partial_x
  \beta(u)\,(\rr(x)-x)\,\d x
  \\&+\betainfty\Big[(\rr(a_k^+)-a_k^+)-(\rr(b_k^-)-b_k^-)-
  \sum_{n\in \Lambda_k}
  \big(\rr(b_k^n)-\rr(a_k^n)-(b_k^n-a_k^n)\big)\Big]
  \\&=
  \int_\R \partial_x\beta(u)(\rr(x)-x)\psi_k(x)\,\d x+
  \betainfty\Leb 1(\tilde\sfG\cap(\rr(b_k^-),\rr(a_k^+))),
\end{align*}
  where we used the fact that $\Leb1\big((b_k^-,a_k^+)\setminus \sfD\big)=0$.

Combining all these estimates we get
\begin{equation}
  \label{eq:62}
  +\infty>\cE(\urho^1)-\cE(\urho)\ge\limsup_{k\uparrow+\infty}\int_\R \partial_x\beta(u)(\rr(x)-x)\psi_k(x)\,\d x.
\end{equation}
On the other hand
\begin{align*}
  +\infty&>\int_\R V(y)\,\d\urho^1(y)-\int_\R V(x)\,\d\urho(x)=
  \int_{\R\times\R}\Big(V(y)-V(x)\Big)\,\d\rrho_{\rm opt}(x,y)\\&\ge
  \int_{\R\times\R} V'(x)(y-x)\,\d\rrho_{\rm opt}(x,y)\ge
  \limsup_{k\uparrow+\infty}  \int_{\R\times\R} V'(x)(y-x)\psi_k(x)\,\d\rrho_{\rm opt}(x,y)
\end{align*}
Summing up the two contributions we have
\begin{displaymath}
  \cF(\urho^1)-\cF(\urho)\ge \limsup_{k\uparrow+\infty}
  \int_{\R\times\R} \xxi(x)(y-x)\psi_k(x)\,\d\rrho_{\rm opt}(x,y)=
  \int_{\R\times\R} \xxi(x)(y-x)\,\d\rrho_{\rm opt}(x,y).
\end{displaymath}

  Let us now show that if $|\partial\cF|(\rho)<+\infty$ then there
  exists $\xxi\in L^2(\rho)$ satisfying \eqref{eq:50} (thus in
  particular $\xxi\in \partial\cF(\rho)$) with
  \begin{equation}
    \label{eq:78}
    \cI(\rho)=\int_\R |\xxi|^2\,\d\rho\le |\partial\cF|^2(\rho);
  \end{equation}
  recalling \eqref{eq:77}, this shows that $\xxi=\partial^\circ\cF(\rho)$.

We apply the forthcoming Lemma \ref{le:Gamma-convergence} and the general
approximation result \cite[Lemma 10.3.16]{ags} to find a family $\urho_{\eps}$ converging to $\urho$ in $\PlusMeasuresTwo{\R,\mass}$ and
$\xxi^\eps\in\de \cF^{\eps}(\urho_{\eps})$
such that
  \begin{equation}
    \lim_{\eps\downarrow0} |\partial\cF^\eps|^2(\rho^\eps)=
    \lim_{\eps\downarrow0} \cI^\eps(\rho^\eps)
    = \lim_{\eps\downarrow0} \int_\R |\xxi^\eps|^2\,\d\rho^\eps=
    |\partial\cF|^2(\rho).
  \end{equation}
  Theorem
  \ref{thm:lsc-dissipation} then yields \eqref{eq:78} and \eqref{eq:50}.
\end{proof}

\subsection{$\Gamma$-convergence of $\cF^\eps$ to $\cF$}
The following lemma shows that the family of functionals
$\cF^\eps$ converges to $\cF$ in a kind of $\Gamma$ convergence way
(with different convergence in the $\liminf$ and the $\limsup$ inequalities).
\begin{lemma}
  \label{le:Gamma-convergence}
As $\eps\downarrow0$ the family of functionals $\cF^\eps$ converge to $\cF$
according to the following two properties:
\begin{itemize}
\item[(i)] For every family $\{\urho^\eps\}\subset\PlusMeasuresTwo{\R,\mass}$
such that $\urho^\eps\weakto \urho$, as $\eps\downarrow0$, in duality with $C^0_{\rm b}(\R)$, and
\begin{equation}\label{boundsecmom}
M_2:=\limsup\limits_{\eps\downarrow0}\Mom{2}{\urho^\eps} <+\infty,
\end{equation}
one has
$$
\liminf_{\eps\downarrow 0}\cF^\eps(\urho^\eps)\ge \cF(\urho).
$$
\item[(ii)] For every $\urho \in \PlusMeasuresTwo{\R,\mass}$ there exists a family of measures
$\{\urho^\eps\}\subset \PlusMeasuresTwo{\R,\mass}$ such that $W_2(\urho^\eps,\urho)\to 0$ as $\eps\downarrow0$ and
$$
\limsup_{\eps\downarrow 0}\cF^\eps(\urho^\eps)\le \cF(\urho).
$$
\end{itemize}
\end{lemma}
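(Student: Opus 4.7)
My plan is to handle the two $\Gamma$-convergence inequalities by essentially independent arguments, with the boundedness $E\ge -\betainfty$ and the monotonicity $E'<0$ serving as the key structural tools inherited from \eqref{hp:beta} and \eqref{eq:121}.

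For part (i), I would discard the trivial case and assume that $\liminf_{\eps\downarrow0}\cF^\eps(\urho^\eps)<+\infty$ along a subsequence; the finiteness of $\cF^\eps$ forces $\urho^\eps=u^\eps\Leb 1$, and I would split $\cF^\eps(\urho^\eps) = \cE(\urho^\eps) + \eps\int u^\eps\log u^\eps\,\d x + \cV^\eps(\urho^\eps)$ and treat the three pieces separately. The narrow lower semicontinuity of $\cE$ granted by \cite{DT84} (see also Proposition \ref{prop:lscF}) yields $\liminf\cE(\urho^\eps)\ge\cE(\urho)$. For the entropy correction, the Fenchel-type inequality $s\log s\ge -s|x|^2-e^{-|x|^2-1}$ combined with \eqref{boundsecmom} gives $\eps\int u^\eps\log u^\eps\,\d x\ge -\eps(M_2+C)\to 0$. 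For the potential energy, the uniform lower bound $V^\eps\ge -C(1+|x|^2)$ furnished by \eqref{eq:82} and \eqref{eq:86}, together with the locally uniform convergence \eqref{eq:85}, yields $\liminf\cV^\eps(\urho^\eps)\ge\cV(\urho)$ through a standard truncation at level $R$ and subsequent passage $R\to+\infty$.

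For part (ii), I would assume $\cF(\urho)<+\infty$ (otherwise there is nothing to prove) and take $\urho^\eps:=\urho*\phi_\eps$, where $\phi_\eps(y):=\eps^{-1}\phi(y/\eps)$ for a fixed smooth compactly supported probability density $\phi$; this automatically preserves the mass $\mass$ and satisfies $W_2(\urho^\eps,\urho)\to 0$. Writing $u^\eps=u*\phi_\eps+\urho^\perp*\phi_\eps$, the heart of the argument is the pointwise chain $E(u^\eps)\le E(u*\phi_\eps)\le (E(u)*\phi_\eps)$: the first inequality holds because $\urho^\perp*\phi_\eps\ge0$ and $E$ is decreasing, the second follows from convexity of $E$ and Jensen's inequality applied to the probability density $y\mapsto\phi_\eps(x-y)$. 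Integration and Fubini give $\cE(\urho^\eps)\le\cE(\urho)$, which combined with the lower semicontinuity above yields $\cE(\urho^\eps)\to\cE(\urho)$. The entropy correction is $O(\eps|\log\eps|)\to 0$, since $u^\eps\le\|\phi_\eps\|_\infty\mass=O(1/\eps)$ bounds $\int u^\eps\log u^\eps$ from above by $\mass\log(C/\eps)$ while the Fenchel bound again provides a uniform lower bound. The convergence $\cV^\eps(\urho^\eps)\to\cV(\urho)$ follows by splitting $\int V^\eps\,\d\urho^\eps-\int V\,\d\urho=\int(V^\eps-V)\,\d\urho^\eps+\bigl(\int V\,\d\urho^\eps-\int V\,\d\urho\bigr)$, where the second term vanishes by \eqref{eq:14} and the first is handled by the uniform estimate $|V^\eps-V|\le C(1+|x|^2)$ plus \eqref{eq:85} and the $W_2$-tightness of $\{\urho^\eps\}$.

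The main obstacle I expect is the control of $\cE$ in part (ii) when $\urho$ has a nontrivial singular part $\urho^\perp$: Jensen's inequality alone handles only the regular contribution, and naively the mollified singular spikes $\urho^\perp*\phi_\eps$ could drive $u^\eps$ toward $+\infty$ on a small set, threatening to spoil the comparison with $\cE(\urho)$. The resolution, which is precisely what makes the sub-linear diffusion framework compatible with $\Gamma$-convergence, is the sign condition $E'<0$: since $E$ is decreasing, adding the nonnegative $\urho^\perp*\phi_\eps$ inside $E$ can only decrease the integrand, so the desired upper bound $\cE(\urho^\eps)\le\cE(\urho)$ survives for free independently of how concentrated $\urho^\perp$ is.
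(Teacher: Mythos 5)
Your part (i) tracks the paper's argument closely and is correct; the Fenchel bound $s\log s\ge -s|x|^2 - e^{-|x|^2-1}$ is a clean alternative to the weighted Cauchy--Schwarz estimate \eqref{stima-ulogu} that the paper uses for the entropy correction, and both do the job once \eqref{boundsecmom} is invoked.

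Part (ii), however, has a genuine gap in the treatment of the potential energy. You mollify $\urho$ directly and claim $\int V\,\d\urho^\eps\to\int V\,\d\urho$ via \eqref{eq:14}, but \eqref{eq:14} characterizes $W_2$-convergence in duality with test functions of at-most-quadratic growth, while $V$ is only required to satisfy \eqref{crescita-quadratica}, i.e.\ $V''\ge\lambda$ and $\liminf_{|x|\to\infty}V/|x|^2\ge0$, and may grow much faster than $|x|^2$. Concretely, if $V(x)=\exp(x^2)$ and $\urho$ is a non-compactly-supported measure with $\int V\,\d\urho<+\infty$, then
\begin{equation*}
\int_\R V\,\d(\urho*\phi_\eps)=\int_\R\int_\R V(y+z)\phi_\eps(z)\,\d z\,\d\urho(y),
\end{equation*}
and the inner integral can be $+\infty$ for $\urho$-a.e.\ $y$, so $\cV^\eps(\urho^\eps)=+\infty$ for every $\eps$ while $\cV(\urho)<+\infty$. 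The uniform estimate $|V^\eps-V|\le C(1+|x|^2)$ you invoke is also not available: \eqref{eq:82} only gives $V^\eps\le V+A|x|^2$, and \eqref{eq:86} only gives a quadratic lower bound on $V^\eps$, neither of which bounds $V-V^\eps$ from above. This is precisely why the paper inserts a preliminary compact-support truncation, $\urho^\eps:=c^\eps\chi_{[-1/\eps,1/\eps]}\urho$ with $c^\eps$ a mass-normalizing constant, shows $W_2(\urho^\eps,\urho)\to0$ and $\cF(\urho^\eps)\to\cF(\urho)$ (using \eqref{eq:82}, \eqref{eq:86}, and \eqref{eq:89}--\eqref{eq:92} to control $\cV$ along the tails), and then runs a diagonal argument so that the recovery sequence only need be built for compactly supported $\urho$. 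On a fixed compact set mollification raises no growth issues: the continuity of $V$ and the locally uniform convergence \eqref{eq:85} handle the potential energy, and your chain $E(u^\eps)\le E(u*\phi_\eps)\le E(u)*\phi_\eps$, together with the sign observation $E'<0$ absorbing the mollified singular part, is exactly the paper's internal-energy argument. So the structural insight in your last paragraph is right, but the recovery sequence must be truncated before it is mollified.
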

\begin{proof}
(i)
  The ``liminf'' inequality for the potential energy $\displaystyle\cV^\eps(\rho):=
  \int_\R V^\eps\,\d\rho$ under weak convergence and \eqref{boundsecmom}
  follows from \eqref{eq:86} and \eqref{eq:85}, since for every
  $\delta>0$ there exist $R>\delta^{-1}$ and
  $\eps_0>0$ such that
  \begin{displaymath}
    V^\eps(x)\ge -\delta |x|^2\quad\text{for every }x\in \R\setminus
    [-R,R],\quad
    V^\eps(x)\ge V(x)-\delta \quad\text{for every }x\in [-2R,2R],\ 0<\eps<\eps_0;
  \end{displaymath}
  for every $0<\eps<\eps_0$ and every smooth function
  \begin{equation}
    \text{$\nchi:\R\to[0,1]$ with $\nchi(x)=1$
  if $|x|\le 1$ and $\nchi(x)=0$ if $|x|\ge2$}
\label{eq:88}
\end{equation}
we have
  \begin{displaymath}
    \cV^\eps(\rho^\eps)= \int_\R
    V^\eps(x)\nchi(x/R)\,\d\rho^\eps+
    \int_\R
    V^\eps(x)(1-\nchi(x/R))\,\d\rho^\eps\ge
    \int_\R
    \Big(V(x)-\delta\Big)\nchi(x/R)\,\d\rho^\eps
    -\delta\int_\R |x|^2\,\d\rho^\eps
 \end{displaymath}
 so that
 \begin{displaymath}
   \liminf_{\eps\to0}\cV^\eps(\rho^\eps)\ge
   \int_\R \nchi(x/R)V(x)\,\d\rho(x)-\delta(\mass+M_2).
 \end{displaymath}
 Since $R\ge\delta^{-1}$ and the previous inequality is valid for
 arbitrary $\delta>0$, passing to the limit as $\delta\to0$ we obtain
 \begin{equation}
   \label{eq:87}
   \liminf_{\eps\to0}\cV^\eps(\rho^\eps)\ge \cV(\rho).
 \end{equation}
Let us now prove the ``liminf'' inequality for $\cE^\eps$: recalling
the usual decomposition
$\urho^\eps=u^\eps\Leb{1} + (\urho^\eps)^\perp$, thanks to the definition of $\cE^\eps$ we get
\begin{equation*}
\cE^\eps(\urho^\eps)= \cE (\urho^\eps) + \eps\int_\R u^\eps \log u^\eps \d x \ge \cE(\urho^\eps)+\eps\int_{\{0< u^\eps< 1\}} u^\eps \log u^\eps \d x.
\end{equation*}
By Cauchy-Schwarz inequality and \eqref{boundsecmom} we obtain
\begin{align}      \label{stima-ulogu}
\limsup_{\eps\downarrow0}\bigg| \int_{\{0< u^\eps< 1\}} u^\eps \log u^\eps \,\dx\bigg|
& \le \limsup_{\eps\downarrow0}\bigg(\int_\R (1+|x|)^2 u^\eps \,\dx\bigg)^\frac 1 2
\bigg(\int_{\{0< u^\eps< 1\}} \frac{u^\eps \log^2 u^\eps }{(1+|x|)^2} \,\dx\bigg)^\frac 1 2 <+\infty.
 \end{align}
Hence
$$
\liminf_{\eps\downarrow0}\cE^\eps(\urho^\eps) \ge \liminf_{\eps\downarrow0}\cE(\urho^\eps),
$$
and (i) follows by the lower semicontinuity of $\cE$ with respect to the weak convergence.

(ii) Let $\urho=u\Leb1+\urho^\perp\in\PlusMeasuresTwo{\R,\mass}$
{with
$\cF(\rho)<+\infty$ (the case $\cF(\rho)=+\infty$ is trivial)}. Defining $c^\eps:=\mass/\urho([-1/\eps,1/\eps])$, and $h^\eps:=c^\eps
\chi_{[-1/\eps,1/\eps]}$, we set $$\urho^\eps:=h^\eps\urho= h^\eps u\Leb1+h^\eps\urho^\perp.$$
Since $\lim_{\eps\downarrow0}h^\eps(x)=1$ pointwise,
for every  function $W:\R\to \R$ such that $\displaystyle \int_\R |W(x)|\,\d\urho(x) <+\infty$, the dominated convergence theorem shows that
\begin{equation}\label{contW}
    \lim_{\eps\downarrow0} \int_\R W(x) \d \urho^\eps(x) =
    \lim_{\eps\downarrow0}\Big(\int_\R W(x) h^\eps(x)u(x) \,\d x + \int_\R W(x) h^\eps(x)\,\d\urho^\perp(x)\Big)
    = \int_\R W(x) \,\d\urho(x).
\end{equation}
In particular, choosing {$W=\varphi$ as in \eqref{eq:14} we obtain that
$W_2(\urho^\eps,\urho)\to 0$ so that for every $\delta>0$ there exists
$R>0$ such that
\begin{equation}
  \label{eq:89}
  \lim_{\eps\downarrow0}\int_\R |x|^2(1-\nchi(x/R))\,\d\rho^\eps=
  \int_\R |x|^2(1-\nchi(x/R))\,\d\rho\le \delta
\end{equation}
for every function $\nchi$ as in \eqref{eq:88}.
On the other hand, \eqref{eq:85} yields $\eps_0>0$ such that
$V^\eps(x)\le V(x)+\delta$ if $|x|\le 2R$ and therefore
\begin{align}
  \notag\cV^\eps(\rho^\eps)&\le
  \int_\R V^\eps(x)\nchi(x/R)\,\d\rho^\eps+
  \int_\R \Big(V(x)+A|x|^2\Big)(1-\nchi(x/R))\,\d\rho^\eps\\
  \label{eq:91}&\le
  \int_\R V(x)\,\d\rho^\eps+\delta\mass+A\int_\R |x|^2(1-\nchi(x/R))\,\d\rho^\eps.
\end{align}
Using \eqref{contW} with $W=V$, passing to the limit as $\eps\downarrow0$ in
\eqref{eq:91}, we obtain
\begin{equation}
  \label{eq:92}
  \limsup_{\eps\downarrow0}\cV^\eps(\rho^\eps)\le V(\rho)+\delta(\mass+A).
\end{equation}
Since $\delta>0$ is arbitrary we conclude
}
\begin{equation}\label{contV}
  \limsup_{\eps\downarrow0} \cV^\eps(\urho^\eps)\le \cV(\rho).
\end{equation}
On the other hand, since $E\leq 0$ is continuous, by Fatou's Lemma we have
\begin{equation}\label{limsupinE}
    \limsup_{\eps\downarrow0}\int_\R E(u^\eps(x))dx \le  \int_\R E(u(x))dx.
\end{equation}
Denoting by $\PlusMeasuresComp{\R,\mass}$ the set of nonnegative measures with compact support and total mass $\mass$, we have just proved that
\begin{equation}\label{Gammacont}
    \forall\,\urho\in\PlusMeasuresTwo{\R,\mass}\cap D(\cF)\ \ \exists \{\urho^\eps\}\subset \PlusMeasuresComp{\R,\mass}
    :\quad
    W_2(\urho^\eps,\urho)\to 0,\quad \lim_{\eps\downarrow0}\cF(\urho^\eps)=\cF(\urho).
\end{equation}
{A standard diagonal argument for $\Gamma$-convergence shows that
(ii) can be reduced to prove }
\begin{equation}\label{GammaLimsupComp}
    \forall\,\urho\in\PlusMeasuresComp{\R,\mass}, \quad \exists \{\urho^\eps\}\subset \PlusMeasuresComp{\R,\mass}
    : W_2(\urho^\eps,\urho)\to 0,\quad \limsup_{\eps\downarrow0}\cF^\eps(\urho^\eps)\leq\cF(\urho).
\end{equation}
Let $\urho=u\Leb1+\urho^\perp\in\PlusMeasuresComp{\R,\mass}$; denoting
by $k^\eps=\eps^{-1}k(\cdot/\eps)$
a standard family
of {symmetric and nonnegative} mollifiers with support
$[-\eps,\eps]$, we set $u^\eps(x) = (k^\eps * \urho) (x) = \int_\R k^\eps(x-y)\,\d\urho(y)$ and $\urho^\eps=u^\eps\Leb{1}$. By definition of
convolution and Fubini's theorem we have
$$\int_\R V(x)\,\d\urho^\eps(x)= \int_\R V(x)\int_\R k^\eps(x-y)\,\d\urho(y)\,\dx =
\int_{\supp (\urho)}\int_{[-1,1]}V(y+\eps z)k(z)\,\d z\, \d\urho(y).$$
By the continuity of $V$,
and the dominated convergence theorem
\begin{equation}\label{contV2}
  \lim_{\eps\downarrow0} \int_\R V(x) \d \urho^\eps(x) = \int_\R V(x) \d \urho(x).
\end{equation}
Recalling that $E$ is decreasing and applying Jensen's inequality to
the probability measure $k^\eps (x-y)\Leb{1}(y)$ and the convex function $E$ we get
$$ E(u^\eps(x))=E\Big(\int_\R k^\eps(x-y)\,\d \urho(y)\Big)\le E\Big(\int_\R u(y)k^\eps(x-y)\,\d y\Big) \leq \int_\R E(u(y)) k^\eps(x-y)\,\d y.$$
Integrating with respect to $x$ and using Fubini's theorem we obtain
\begin{equation}\label{supE}
    \int_\R E(u^\eps(x))\,\d x \leq \int_\R E(u(x))\,\d x.
\end{equation}
Finally, since $k^\eps\le 1/\eps$ and $u^\eps(x)\le \eps^{-1}\mass$, we have
\begin{equation}\label{estEntr}
  \eps\int_\R u^\eps\log u^\eps \,\d x\le \mass\, \eps \log \frac \mass  \eps.
\end{equation}
Since $W_2(\urho^\eps,\urho)\to 0$,
\eqref{contV2}, \eqref{supE} and
\eqref{estEntr} yield \eqref{GammaLimsupComp}.
\end{proof}

\section{Proofs of the main Theorems}


\subsection{Subdifferential characterization of the gradient flow of $\cF$ and existence result.}\label{sec:gf}

\begin{proof}[Proof of Theorem \ref{thm:main1}.]
The proof of Theorem \ref{thm:main1} is based on the general results
about the generation of gradient flows for
displacement $\lambda$-convex functionals in
$\PlusMeasuresTwo{\R,\mass}$ established in \cite{ags} (notice that all the theory in \cite{ags} can be applied to the space
$\PlusMeasuresTwo{\R,\mass}$ and not only to the space $\PlusMeasuresTwo{\R,1}$ considered in \cite{ags}).

By Proposition \ref{prop:lscF} the functional $\cF$ is displacement
$\lambda$-convex
  (in dimension $1$ generalized geodesics \cite[Definition 9.2.2]{ags}
  coincide
  with the displacement interpolations \eqref{eq:38})
and we can apply the general theory summarized in Theorem 11.2.1
\cite{ags}.

  Since $D(\cF)=\{\urho\in \PlusMeasuresTwo{\R,\mass}:
  \cF(\urho)<+\infty \}$ is dense in $\PlusMeasuresTwo{\R,\mass}$,
  the evolution is well defined starting from
an arbitrary element of $\PlusMeasuresTwo{\R,\mass}$. Therefore, by \cite[Theorem 11.2.1]{ags},
for every $\urho_0\in \PlusMeasuresTwo{\R,\mass}$ there exists a unique curve $\urho$ belonging to  $
C^0([0,+\infty);\PlusMeasuresTwo{\R,\mass})$ such that
$\rho_t\in D(\cI)\subset D(\cF)$ for every $t>0$ and
\begin{align}\label{ce}
  \de_t\urho_t + \de_x(\urho_t\,\vv_t)&=0, \qquad &\text{ in } \DD'(\R\times (0,+\infty)),\\
\label{nlrel}
  \vv_t&=-\de^\circ\cF(\urho_t), \qquad &\text{ for $\Leb{1}$-a.e. } t \in (0,+\infty),\\
  \label{eq:43}
  \cF(\urho_{t_0})-\cF(\urho_{t_1}) &= \int_{t_0}^{t_1}\int_\R|\vv_t|^2\,\urho_t(x)\,\d t
  \qquad &0\leq t_0 < t_1,
\end{align}
Moreover the map $\urho_0\mapsto S_t(\urho_0):=\urho_t$ defines a continuous semigroup satisfying the $\lambda$-contraction property
\eqref{eq:15}.
From \cite[Theorem 2.4.15]{ags} the map $t\mapsto e^{\lambda t}|\de \cF|^2(\urho_t)$ is non-increasing, and then \eqref{eq:19}
holds. The regularization estimate \eqref{eq:18}
  (which implies \eqref{eq:3bis})
  still follows by
  Theorem 11.2.1 and by
\cite{S10} in the case $\lambda\neq 0$.
From \eqref{nlrel} and Theorem \ref{th:charsubdiff} we have
\eqref{eq:3}.
\eqref{ce}, \eqref{nlrel}, and \eqref{eq:50} yields
\eqref{eq-debole}.
{The comparison result follows from Theorem
\ref{thm:main2} and the corresponding property for solution of the
viscous regularization.}
\end{proof}

\begin{proof}[Proof of Theorem \ref{thm:main2}]
The part concerning existence of solutions to problem \eqref{eq:17} for a measure initial datum, is similar to the part concerning existence for
problem \eqref{eq:rho},  taking into account the characterization of the subdifferential of $\cF^\eps$ \eqref{eq:48}.

The stability with respect to the convergence in $\PlusMeasuresTwo{\R,\mass}$ follows from Lemma \ref{le:Gamma-convergence} and Theorem 11.2.1 of
\cite{ags}. The uniform convergence follows from Theorem \ref{thm:lsc-dissipation} \eqref{eq:219}.
\end{proof}

%
%

\subsection{Localized entropy estimates and propagation of
  singularities}\label{sec:estimates}
Let us consider
  \begin{subequations}
    \begin{gather}
  \label{eq:93}
  \text{a smooth convex function $\psi:[0,+\infty)\to \R$ with
    $\psi(0)=0$,}
  \intertext{and let us set (recall that $\beta^\eps(r)=\beta(r)+\eps r$)}
  \label{eq:63}
  \eta(r):=r\psi'(r)-\psi(r),\quad
  \gamma(r):=\int_0^r\beta'(s)\psi'(s)\,\d s,\quad
  \gamma^\eps(r):=  \gamma(r)+\eps\psi(r)=
  \int_0^r
  (\beta^\eps)'(s)\psi'(s)\,\d s.
\end{gather}
\end{subequations}

\begin{theorem}          \label{thm:propagsing}
  If $u^\eps$ is a smooth {bounded} solution to \eqref{eq:17}
  {and $\psi,\eta,\gamma^\eps$ satisfy \eqref{eq:93} and
    \eqref{eq:63},}
  then
  $\psi(u^\eps)$ is a classical solution to
  \begin{equation}  \label{eq:59bis}
        \partial_t\psi(u^\eps)-\partial_x\big(\partial_x\gamma^\eps(u^\eps)+\psi(u^\eps)(V^\eps)'\big)\le \eta(u^\eps)(V^\eps)''.
  \end{equation}
  In particular, for every nonnegative $\phi\in C^2_{\rm c}(\R\times [0,T])$
  it holds
  \begin{equation}
  \label{eq:59}
  \begin{aligned}
  \int_\R \psi(u^\eps(x,T))& \phi(x,T)\,\d x +
    \int_0^T\int_\R
    \psi(u^\eps)\big(-\partial_t\phi+\partial_x \phi (V^\eps)'\big)\,\d x\,\d t\\&
   - \int_0^T\int_\R \big(\gamma^\eps(u^\eps)\partial^2_x\phi +\eta(u^\eps)\phi (V^\eps)''\big)\,\d x\,\d t
     \le \int_\R \psi(u^\eps(x,0))\phi(x,0)\,\d x.
    \end{aligned}
 \end{equation}
\end{theorem}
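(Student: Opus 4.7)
The plan is to derive \eqref{eq:59bis} by a direct chain-rule computation on the strong form \eqref{eq:17}, and then obtain the distributional inequality \eqref{eq:59} by pairing with the test function $\phi$ and integrating by parts. Since $u^\eps$ is a smooth bounded solution, \eqref{eq:17} expands as
\[
\partial_t u^\eps \;=\; \partial_{xx}\beta^\eps(u^\eps) \;+\; (V^\eps)'' u^\eps \;+\; (V^\eps)'\,\partial_x u^\eps.
\]
Multiplying by $\psi'(u^\eps)$ and using $\partial_t\psi(u^\eps)=\psi'(u^\eps)\partial_t u^\eps$ converts the left-hand side. For the diffusion term I would write
\[
\psi'(u^\eps)\,\partial_{xx}\beta^\eps(u^\eps) \;=\; \partial_x\!\bigl[\psi'(u^\eps)\,\partial_x\beta^\eps(u^\eps)\bigr] \;-\; \psi''(u^\eps)(\beta^\eps)'(u^\eps)\bigl(\partial_x u^\eps\bigr)^2,
\]
noting that $\psi'(u^\eps)\,\partial_x\beta^\eps(u^\eps) = \partial_x\gamma^\eps(u^\eps)$ by the definition of $\gamma^\eps$ in \eqref{eq:63}. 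For the transport term I would rewrite
\[
\psi'(u^\eps)\,(V^\eps)'\,\partial_x u^\eps \;=\; (V^\eps)'\,\partial_x\psi(u^\eps) \;=\; \partial_x\!\bigl[(V^\eps)'\,\psi(u^\eps)\bigr] \;-\; (V^\eps)''\,\psi(u^\eps).
\]

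Collecting these contributions and using the identity $u^\eps\psi'(u^\eps) - \psi(u^\eps) = \eta(u^\eps)$ produces
\[
\partial_t\psi(u^\eps) \;=\; \partial_x\!\bigl[\partial_x\gamma^\eps(u^\eps) + \psi(u^\eps)(V^\eps)'\bigr] \;+\; \eta(u^\eps)(V^\eps)'' \;-\; \psi''(u^\eps)(\beta^\eps)'(u^\eps)\bigl(\partial_x u^\eps\bigr)^2.
\]
The last term is nonnegative because $\psi$ is convex and $\beta^\eps$ is increasing; dropping it yields \eqref{eq:59bis}.

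For \eqref{eq:59} I would multiply \eqref{eq:59bis} by the nonnegative test function $\phi\in C^2_{\rm c}(\R\times[0,T])$ and integrate over $\R\times(0,T)$. Integrating by parts in $t$ on the $\partial_t\psi(u^\eps)\,\phi$ term produces the boundary terms $\int_\R \psi(u^\eps(\cdot,T))\phi(\cdot,T)\,\d x - \int_\R\psi(u^\eps(\cdot,0))\phi(\cdot,0)\,\d x$ together with $-\iint \psi(u^\eps)\,\partial_t\phi\,\d x\,\d t$; integrating by parts twice in $x$ on the $\partial_x\gamma^\eps(u^\eps)$ flux and once on the $\psi(u^\eps)(V^\eps)'$ flux yields, respectively, $-\iint \gamma^\eps(u^\eps)\,\partial^2_x\phi\,\d x\,\d t$ and $+\iint\psi(u^\eps)(V^\eps)'\,\partial_x\phi\,\d x\,\d t$. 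All boundary terms at $|x|\to\infty$ vanish by the compact support of $\phi$, and rearranging gives exactly \eqref{eq:59}.

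There is no real obstacle here: smoothness of $u^\eps$ legitimizes every chain rule and integration by parts, while convexity of $\psi$ and monotonicity of $\beta^\eps$ supply the key sign that turns the identity into an inequality. The only place requiring mild care is the sign bookkeeping when collecting the drift contributions, which must reproduce exactly the combination $+\partial_x\phi\,(V^\eps)' - \phi\,(V^\eps)''$ appearing in \eqref{eq:59}.
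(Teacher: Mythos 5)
Your proof is correct and takes essentially the same route as the paper: expand the equation, multiply by $\psi'(u^\eps)$, use the chain rule for the diffusion and transport terms, collect the identity with the extra term $-\psi''(u^\eps)(\beta^\eps)'(u^\eps)(\partial_x u^\eps)^2$, and drop it by convexity of $\psi$ and monotonicity of $\beta^\eps$. The paper simply records the resulting pointwise identity as a ``straightforward computation'' and infers \eqref{eq:59bis}; your derivation of \eqref{eq:59} by pairing with $\phi\ge0$ and integrating by parts (with the sign bookkeeping you describe) is exactly the intended passage.
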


\begin{proof}
By straightforward computations we obtain that
$$
 \partial_t\psi(u^\eps)-\partial_x\big(\partial_x\gamma^\eps(u^\eps)+\psi(u^\eps)(V^\eps)'\big)
 =\eta(u^\eps)(V^\eps)''-(\beta^\eps)'(u^\eps)\psi''(u^\eps)(\partial_x u^\eps)^2.
$$
Since $\psi$ is convex and $\beta^\eps$ is increasing,  $(\beta^\eps)'(u^\eps)\psi''(u^\eps)(\partial_x u^\eps)^2\ge 0$. This implies
\eqref{eq:59bis}.
\end{proof}

\noindent We will now prove the {\em a priori} estimate \eqref{eq:9}.
\begin{corollary}
  \label{cor:linfty_bound}
  Let us assume that \eqref{eq:8} holds and that $\urho_0=u_0\Leb1$ has a bounded density. Then \eqref{eq:9} holds.
\end{corollary}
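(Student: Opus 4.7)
The strategy is to apply Theorem \ref{thm:propagsing} (or rather its analogue for the unperturbed equation \eqref{eq:rho}, which is the same computation with $\eps=0$) to $\psi$ equal to a smooth convex approximation of the truncation $(r-K)^+$, and then to eliminate the $K$ dependence by differentiating an appropriate functional along a time-dependent threshold tuned to the assumption $V''\le \sfc$.

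Concretely, I would fix $K(t):=\|u_0\|_{L^\infty}\rme^{\sfc t}$, so that $K(0)=\|u_0\|_{L^\infty}$ and $K'(t)=\sfc\,K(t)$. For fixed $K>0$ the choice $\psi(r)=(r-K)^+$ gives $\eta(r)=r\psi'(r)-\psi(r)=K\chi_{\{r>K\}}$ and $\gamma(r)=(\beta(r)-\beta(K))^+$; approximating $\psi$ by smooth convex functions $\psi_\delta\uparrow \psi$ and passing to the limit $\delta\downarrow0$ in \eqref{eq:59bis} (all quantities being uniformly controlled thanks to the assumed smoothness and boundedness of $u$), one obtains in the distributional sense
\begin{equation*}
\partial_t(u-K)^+ -\partial_x\bigl(\partial_x(\beta(u)-\beta(K))^+ +(u-K)^+V'\bigr)\le K\chi_{\{u>K\}}V''\le \sfc K\chi_{\{u>K\}},
\end{equation*}
where the last inequality uses \eqref{eq:8}.

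Next, define $F(K,t):=\int_\R (u(t,x)-K)^+\,\d x$. Integration in $x$ kills the divergence terms: $(u-K)^+$ is compactly supported (since $u$ is bounded and integrable), and $\partial_x(\beta(u)-\beta(K))^+$ is continuous, vanishes where $u\le K$, and decays at infinity, hence its integral against the constant test function $1$ is zero after a routine cut-off argument. Therefore
\begin{equation*}
\partial_t F(K,t)\le \sfc K\,\Leb 1\bigl(\{u(t,\cdot)>K\}\bigr)=-\sfc K\,\partial_K F(K,t).
\end{equation*}
Evaluating along the curve $K=K(t)$ and using $K'(t)=\sfc K(t)$,
\begin{equation*}
\frac{\d}{\d t}F(K(t),t)=\partial_t F(K(t),t)+K'(t)\partial_K F(K(t),t)\le \sfc K(t)\Leb 1\bigl(\{u>K(t)\}\bigr)-\sfc K(t)\Leb 1\bigl(\{u>K(t)\}\bigr)=0.
\end{equation*}
Since $F(K(0),0)=\int_\R (u_0-\|u_0\|_{L^\infty})^+\,\d x=0$, we deduce $F(K(t),t)\equiv 0$ for all $t\ge 0$; equivalently $u(t,x)\le K(t)$ for a.e.\ $x$, which is exactly \eqref{eq:9}.

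The main technical points are (i) justifying the passage $\psi_\delta\to(\cdot-K)^+$ in the integral inequality \eqref{eq:59} (routine by monotone and dominated convergence, once one checks that $\gamma_\delta(u)$ and $\eta_\delta(u)$ are dominated by integrable functions, which follows from $u\in L^\infty\cap L^1$ and the boundedness of $\beta$), and (ii) the vanishing of the boundary terms at infinity upon integrating in $x$; the latter is proved by testing against a smooth cut-off $\nchi(x/R)$ as in \eqref{eq:88} and letting $R\uparrow\infty$, using the conservation of mass and the linear growth of $V'$ controlled by the finite second moment of $\urho_t$. Neither is delicate; the only genuinely substantive idea is the choice of the time-dependent threshold $K(t)$, which converts the inhomogeneous right-hand side $\sfc K\chi_{\{u>K\}}$ into a total time derivative.
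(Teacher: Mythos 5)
Your proof is correct but follows a genuinely different route from the paper. The paper also starts from Theorem \ref{thm:propagsing}, but chooses $\psi(r)=r^p$ (so $\eta(r)=(p-1)r^p$), passes to a spatially global inequality, applies Gronwall to obtain $\|u^\eps(T)\|_{L^p}\le \rme^{\sfc(1-1/p)T}\|u_0\|_{L^p}$, and then lets $p\uparrow+\infty$; this is the classical energy/Moser-type argument. You instead take $\psi(r)=(r-K)^+$, which turns the inequality into a Stampacchia level-set estimate, and then make the level $K(t)$ time-dependent so that the inhomogeneity $\sfc K\chi_{\{u>K\}}$ is exactly absorbed by the extra $K'(t)\,\partial_K F$ term. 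Both approaches need the same nonnegativity $\eta\ge 0$ so that $(V^\eps)''\le\sfc$ can be used, and both give the sharp constant. Your argument is arguably more direct since it avoids the $p\to\infty$ limit, at the price of a small amount of care about the Lipschitz-but-not-$C^1$ dependence of $F(K,t)=\int(u-K)^+\d x$ on $K$ at levels where $\Leb 1(\{u=K\})>0$; Rademacher's theorem and the joint Lipschitz bound $|\partial_K F|\le \mass/K_0$ for $K\ge K_0>0$ make this rigorous. One small mismatch with the paper: Theorem \ref{thm:propagsing} is stated for the regularized problem \eqref{eq:17}, and, as in the paper's proof, you should run the computation with $u^\eps$, $\beta^\eps$, $(V^\eps)''\le\sfc$, obtain the $\eps$-uniform bound $\|u^\eps_t\|_{L^\infty}\le\|u_0\|_{L^\infty}\rme^{\sfc t}$, and then pass to the limit via Theorem \ref{thm:main2}, rather than invoking an ``$\eps=0$ analogue'' for \eqref{eq:rho}, for which smooth solutions are not yet available.
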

\begin{proof}
  {By Theorem
  \ref{thm:main2} it is sufficient to show  \eqref{eq:9} for the
  (bounded and integrable)} solutions $\rho^\eps=u^\eps\Leb{1}$ of \eqref{eq:17} with initial
  datum $\urho_0$.
Let us apply \eqref{eq:59} with $\psi(r)=r^p$,  $p\ge 2$, and
$\phi(x)=\nchi(x/n)$, where $\nchi$ satisfies \eqref{eq:88}.
{Since $(V^\eps)'$ is bounded and $(V^\eps)''\le \sfc$,}
it is not difficult to pass to the limit as $n \to+\infty$, getting
\begin{equation*}
  \int_\R u^\eps(x,T)^p \,\d x
     \le \int_\R u_0^p(x)\,\d x+\sfc (p-1) \int_0^T\int_\R u^\eps(x,t)^p \,\d x\,\d t.
    \end{equation*}
From Gronwall's Lemma it follows that
\begin{equation*}
\int_\R u^\eps(x,T)^p \,\d x \le \rme^{\sfc (p-1) T}\int_\R u_0^p(x)\,\d x, \qquad \text{for all}\ T>0.
\end{equation*}
Letting $p\uparrow +\infty$ we get estimate \eqref{eq:9} for $\urho^\eps$.
 \end{proof}

 \noindent
The following corollary of Theorem  \ref{thm:propagsing}
is a preliminary step for the proof of Theorem \ref{thm:main3}
on the propagation of the singularities.
\begin{corollary}\label{cor3}
  Let $\psi,\eta,\gamma$ be as in \eqref{eq:93} and \eqref{eq:63},
    with
    $\lim_{r\uparrow+\infty}\psi'(r)=\psi_\infty'\in (0,+\infty)$.
    If $\urho=u\Leb 1+\urho^\perp$ is the measure-valued solution to
  \eqref{eq:rho}
  and $\psi(\urho):=\psi(u)\Leb 1+\psi_\infty'\,\urho^\perp$, we have
  \begin{equation}
    \label{eq:66}
    \partial_t
    \psi(\urho)-\partial_x\big(\psi(\urho)V'\big)\le \partial^2_x(\gamma(u))+\eta(u)V''\quad
    \text{in the sense of distributions}.
  \end{equation}
\end{corollary}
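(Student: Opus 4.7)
The plan is to derive \eqref{eq:66} by passing to the limit $\eps\downarrow 0$ in the pointwise differential inequality \eqref{eq:59bis} satisfied by smooth solutions $u^\eps$ of the viscous regularization \eqref{eq:17}. I would first apply Lemma \ref{le:Gamma-convergence}(ii) to produce an approximating family $\urho_0^\eps = u_0^\eps\Leb 1 \to \urho_0$ in $\PlusMeasuresTwo{\R,\mass}$ with $\cF^\eps(\urho_0^\eps)\to\cF(\urho_0)$ and $u_0^\eps\in C^1_c(\R)$. Then Theorem \ref{thm:main2} guarantees that the corresponding smooth solutions $\urho_t^\eps = u_t^\eps\Leb 1$ of \eqref{eq:17} converge in $\PlusMeasuresTwo{\R,\mass}$ to the Wasserstein solution $\urho_t$ for every $t>0$, while \eqref{eq:18} gives a uniform-in-$\eps$ bound on $\cI^\eps(\urho_t^\eps)$ on each compact subinterval of $(0,+\infty)$, which is what Theorem \ref{thm:lsc-dissipation} requires.

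Test \eqref{eq:59bis} against an arbitrary nonnegative $\phi\in C^\infty_c((0,+\infty)\times\R)$ and integrate by parts in $t$ and twice in $x$ on the flux term to obtain
\[
-\iint \psi(u^\eps)\,\de_t\phi\,\dx\,\dt -\iint \gamma^\eps(u^\eps)\,\de_x^2\phi\,\dx\,\dt +\iint \psi(u^\eps)(V^\eps)'\de_x\phi\,\dx\,\dt \le \iint \eta(u^\eps)(V^\eps)''\phi\,\dx\,\dt.
\]
Next I would check the asymptotic growth of the three nonlinearities. Since $\psi'$ is bounded and nondecreasing with $\psi'(r)\to\psi_\infty'$, de l'Hôpital gives $\psi(r)/r\to\psi_\infty'$; since $\beta'\ge 0$ has $\int_0^{+\infty}\beta'(s)\,\d s = \betainfty<+\infty$ and $\psi'\le\psi_\infty'$, the function $\gamma$ is bounded on $[0,+\infty)$ and hence $\gamma(r)/r\to 0$; finally $\eta'(r) = r\psi''(r)\ge 0$ together with $\eta(r)/r = \psi'(r)-\psi(r)/r$ yields $\eta(r)/r\to 0$. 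Moreover, the perturbation $\eps\psi(u^\eps)$ contributing to $\gamma^\eps-\gamma$ is harmless: $\int_\R \eps\psi(u^\eps)\,\dx\le \eps\,\psi_\infty'\mass\to 0$.

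I would then pass to the limit slicewise in $t$ using Theorem \ref{thm:lsc-dissipation}\eqref{eq:65}: for a.e.\ $t>0$, applied with $f=\psi$ (where $f_\infty=\psi_\infty'$), $f=\gamma$ (where $f_\infty=0$), and $f=\eta$ (where $f_\infty=0$), one gets
\[
\psi(u^\eps_t)\Leb 1 \weakto \psi(u_t)\Leb 1+\psi_\infty'\rho_t^\perp = \psi(\rho_t),\qquad \gamma(u^\eps_t)\Leb 1\weakto \gamma(u_t)\Leb 1,\qquad \eta(u^\eps_t)\Leb 1\weakto \eta(u_t)\Leb 1
\]
in duality with $C^0_c(\R)$. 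Combined with the local uniform convergence $(V^\eps)^{(k)}\to V^{(k)}$ from \eqref{eq:85}, this yields convergence of the spatial integrand at each such $t$. Dominated convergence in $t$ is then straightforward: each spatial integral is bounded uniformly by a multiple of $\|\phi\|_{C^2}$, $\|\de_x\phi\, V'\|_\infty$ on $\supp\phi$, or $\|\phi V''\|_\infty$ on $\supp\phi$, times $\psi_\infty'\mass$ (or $\gamma$-bound $\times$ Lebesgue measure of $\supp\phi$), all independent of $\eps$. The resulting limiting inequality is the distributional formulation of \eqref{eq:66}.

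The main obstacle is matching the correct singular/regular split of the limit with the three different growths: $\psi$ is truly linear at infinity and must produce the atomic contribution $\psi_\infty'\rho^\perp$ in the limiting measure $\psi(\rho)$, while $\gamma$ and $\eta$ are sublinear at infinity and must \emph{not} contribute any singular part—otherwise the right-hand side of \eqref{eq:66} would also carry a singular measure on $\CDom{u}$. Theorem \ref{thm:lsc-dissipation}\eqref{eq:65} is precisely the tool that records this distinction through the value of $f_\infty$, and verifying the three growth conditions above is the key analytic point that makes the passage to the limit deliver exactly the statement \eqref{eq:66}.
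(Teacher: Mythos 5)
Your proof is correct and follows essentially the same route as the paper: pass to the limit in the localized entropy inequality \eqref{eq:59} using Theorem \ref{thm:lsc-dissipation}\eqref{eq:65} with the growth checks $\psi(r)/r\to\psi_\infty'$, $\gamma(r)/r\to 0$, $\eta(r)/r\to 0$, and dominated convergence. Your bound on $\gamma$ (directly showing $\gamma\le\psi_\infty'\betainfty$ is bounded) is a slightly more direct route than the paper's integration by parts, but the argument is the same in substance.
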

\begin{proof}
  It is sufficient to pass to the limit in \eqref{eq:59}, recalling \eqref{eq:65}
   and applying the dominated convergence theorem with the estimate
   $|\psi(r)|\le \|\psi'\|_{L^\infty((0,+\infty))} r$.

     Notice that
     \begin{displaymath}
       \lim_{r\to+\infty}\frac {\eta(r)}r=
       \lim_{r\to+\infty} \Big(\psi'(r)-\frac {\psi(r)}r\Big)=0
     \end{displaymath}
     and
     \begin{displaymath}
       \lim_{r\to+\infty}\frac {\gamma(r)}r=
       \lim_{r\to+\infty}\frac
       1r\Big(\beta(r)\psi'(r)-\beta(0)\psi'(0)-\int_0^r
       \beta(s)\psi''(s)\,\d s\Big)
       =0,
     \end{displaymath}
     since $\lim_{r\uparrow+\infty}\beta(r)=\beta_\infty<+\infty$ and we estimate the integral as follows
     \begin{displaymath}
     0\le \frac {1}{r} \int_0^r
       \beta(s)\psi''(s)\,\d s\le
      \frac{\beta_\infty}{r}\Big(\psi'(r)-\psi'(0)\Big).
     \end{displaymath}
  \end{proof}

\begin{proof}[Proof of Theorem \ref{thm:main3}]
Let us fix a nonnegative function $\zeta\in C^\infty_{\rm c}(\R)$
  with compact support in $[0,1]$ and integral equal to $1$. We set
  $\zeta_k(r):=\zeta(r-k)$, $Z_k(r):=\int_0^r \zeta_k(s)\,\d s$,
  $\psi_k(r)=\int_0^r Z_k(s)\,\d s$. It is immediate to check that $\psi_k$
  satisfies the assumptions of Corollary \ref{cor3}.  Moreover, the
  corresponding functions $\gamma_k(r)$ and $\eta_k(r)$ are uniformly
  bounded by $Cr$ and converge to $0$ pointwise as $k\to +\infty$.
  Passing to the limit in
  \begin{equation}
    \label{eq:66bis}
    \partial_t
    \psi_k(\urho)-\partial_x\big(\psi_k(\urho)V'\big)\le \partial^2_x(\gamma_k(u))+\eta_k(u)V''\quad
    \text{in the sense of distributions}
  \end{equation}
as $k\uparrow+\infty$ we obtain \eqref{eq:68}.  Now, set $\mu_t=
(\sfX_t)_\#\urho_0^\perp$. It is well known that $\mu_t$ solves
$\partial_t\mu_t-\partial_x(\mu_tV')=0$. Then the family of measures
$\sigma_t=\urho_t^\perp-\mu_t$ satisfies
$\partial_t\sigma_t-\partial_x(\sigma_tV')\le 0$ with {$\sigma_0\le0$}.
By {a simple variant of} Proposition 8.1.7 of \cite{ags} we deduce that $\sigma_t\le 0$
for every $t\ge 0$.
Therefore for
every Borel set $A\subset \R$, $ \urho_t^\perp(A)\le \urho_0^\perp
\big(\sfX_t^{-1}(A)\big)$. Choosing $A=D_t$, the inclusion
$\CDom{u_t}\subset \mathsf J_t$ follows.
\end{proof}


\subsection{Minimizers, stationary solutions, and asymptotic properties}
\label{sec:min}
\begin{proof}[Proof of Theorem \ref{thm:main4}.]
  Let us first show that
  every measure
  $\urho_{\rm min}=u_{\rm min}\Leb1 +\urho^\perp_{\rm min}$ satisfying
  \eqref{eq:30} is a minimizer for $\cF$.

    Notice that by construction $\rho_{\rm min}\in \CPlusMeasures{\R,\mass}$.

Let {$\urho=u\Leb 1+\urho^\perp$ be an arbitrary measure in
$\PlusMeasures{\R,\mass}$.}
If  $A=\{x\in\R:V(x)-\frak v<\dV \}$
and $B=\R\setminus A$ denotes its complement,
$$ u_{\rm min}(x)=\begin{cases}H(V(x)-\frak v) & \text{if }x\in A,
  \\ 0 & \text{if }x\in B. \end{cases}$$
Since $$ E'(H(v))=\begin{cases}-v & \text{if }v\in(0,\dV), \\
  -\dV
& \text{if }v\in[\dV,+\infty), \end{cases}$$
and $E$ is convex, we
get
\begin{align*}
  \cE(\urho)-\cE(\urho_{\rm min})& =
  \int_\R \big(E(u(x))-E(u_{\rm min}(x))\big)\,\d x\ge
  \int_\R E'(u_{\rm min}(x))\big(u(x)-u_{\rm min}(x)\big)\,\d x\\&=
  \int_A (\frak v-V(x)) \big(u(x)-u_{\rm min}(x)\big)\,\d x
  - \dV\int_B  u(x)\,\d x.
\end{align*}
Moreover, since $V(x)-\frak v\ge \dV$
for every $x\in B$,
\begin{align*}
  \cF(\urho)-\cF(\urho_{\rm min})&=\cE(\urho)-\cE(\urho_{\rm min})+\int_\R
  V\,\d\urho-\int_\R V\,\d\urho_{\rm min}\\
  &\ge \int_A (\frak v-V(x)) \big(u(x)-u_{\rm min}(x)\big)\,\d x
  + \int_B (V(x)-\dV) u(x)\,\d x \\
  &\quad + \int_A V(x) \big(u(x)-u_{\rm min}(x)\big)\,\d x
  +\int_\R V\,\d\urho^\perp-\int_\R V\,\d\urho^\perp_{\rm min} \\
  &\ge \int_\R \frak v \big(u(x)-u_{\rm min}(x)\big)\,\d x
  +\int_\R V\,\d\urho^\perp-\int_\R V\,\d\urho^\perp_{\rm min}.
\end{align*}
 Hence, owing to the identity
 \[\displaystyle
 {\rho(\R)=\rho_{\rm min}(\R),\quad
 \text{so that}\quad
 \int_\R u\,\d x-
 \int_\R u_{\rm
min}\,\d x=\int_\R \d \urho_{\rm min}^\perp-\int_\R \d \urho^\perp } ,
 \]
  and  recalling that $\urho_{\rm
min}^\perp$ is concentrated in $Q$, we obtain
\begin{align*}
  \cF(\urho)-\cF(\urho_{\rm min})&\ge\int_\R \frak v \big(u(x)-u_{\rm min}(x)\big)\,\d x
  +\int_\R V\,\d\urho^\perp-\int_\R V\,\d\urho^\perp_{\rm min}\\
  &=\int_\R (V-\frak v)\,\d\urho^\perp-\int_\R(V-\frak v)\,\d\urho^\perp_{\rm min}
  \ge -\int_\R(V-\frak v)\,\d\urho^\perp_{\rm min} =0.
\end{align*}
This shows that  $\cF(\urho)\ge\cF(\urho_{\rm min})$ for every
{$\urho\in\PlusMeasures{\R,\mass}$.}

  We prove now that every minimizer $\urho=u\Leb 1+\urho^\perp \in
  \PlusMeasures{\R,\mass}$ of $\cF$ in $\PlusMeasures{\R,\mass}$
  satisfies \eqref{eq:30}. We
  consider another minimizer $\rho_{\rm min}$ given by \eqref{eq:30}
  so that
equalities hold in all the previous inequalities and in particular we have
$$
0=\cF(\urho)-\cF(\urho_{\rm min})= \int_\R (V-\frak v)\,\d\urho^\perp .
$$
It follows that $\urho^\perp$ is concentrated on $Q$ and $\urho^\perp =0$ when $\mass < \mass_{\rm c}$
(recall that $V(x)-\frak v \ge 0$ and equality holds if and only if $\frak v = V_{\rm min}$ and $x\in Q$).
If $u\not = u_{\rm min}$, then, by the strict convexity of $E$,
$\cF((1-\theta)\urho + \theta \urho_{\rm min})< \cF(\urho_{\rm min})$ for every $\theta \in (0,1)$.
Taking the continuity of $u$ into account, it follows that $u(x)=u_{\rm min}(x)$ for every $x\in \R$.
Consequently
$\urho^\perp (\R) = \urho_{\rm min}^\perp (\R)$ and we conclude.
\end{proof}

\begin{proof}[Proof of Theorem \ref{thm:Fisher=0}]
  It follows easily by \cite[Theorem 11.1.3]{ags}, which shows
  in particular that $\rho$ is a stationary solution of the Wasserstein gradient
  flow of a displacement $\lambda$-convex functional $\cF$ iff
  $|\partial \cF|(\rho)=0$.
  We can then invoke Theorem \ref{th:charsubdiff}.
\end{proof}
The proof of Theorems \ref{thm:statI} and \ref{thm:main_stationary} is based on the following lemma:
\begin{lemma}
  \label{le:char}
  Let $\rho=u\Leb 1+\rho^\perp\in \CPlusMeasures\R$ be a measure
  satisfying $\cI(\rho)=0$, and let us consider the open set $\Pos u:=\big\{x\in
  \R:u(x)>0\big\}$.
  If $I$ is a connected component of $\Pos u$ then
  \begin{equation}
  \label{eq:64}
  E'(u(x))+V(x)=c_I\quad \text{for every }x\in I.
\end{equation}
\end{lemma}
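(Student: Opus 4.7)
The strategy is to show that $E'(u)+V$ is locally absolutely continuous on $I$ with vanishing distributional derivative, so that constancy follows from the connectedness of $I$. The main tool is a chain rule for the composition $E'(u)=G(\beta(u))$, where $G:=E'\circ\beta^{-1}$.

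Unwinding the hypothesis $\cI(\rho)=0$: since both contributions in the definition \eqref{eq:5} are nonnegative and their sum vanishes, $\beta\circ u\in W^{1,1}_{\rm loc}(\R)$ and
\[
\frac{\partial_x\beta(u)}{u}+V'=0\quad \Leb 1\text{-a.e.\ on }\Domp u.
\]
Since $u>0$ throughout the connected component $I\subset\Pos u$ and $\CDom u=\R\setminus\Dom u$ has zero Lebesgue measure, the set $I\cap\Domp u=I\setminus\CDom u$ has full measure in $I$.

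Next I would fix an arbitrary compact interval $K\subset I$. Continuity of $u:\R\to[0,+\infty]$ combined with the strict positivity $u>0$ on $I$ gives $\delta_K:=\min_K u>0$, hence $\beta(u)\ge\beta(\delta_K)>0$ on $K$. Extending $E'$ continuously to $(0,+\infty]$ by $E'(+\infty):=0$, I define $G:(0,\betainfty]\to\R$ by $G(y):=E'(\beta^{-1}(y))$; using \eqref{eq:121} one computes $G'(y)=E''(\beta^{-1}(y))/\beta'(\beta^{-1}(y))=1/\beta^{-1}(y)$, so $G$ is Lipschitz on $[\beta(\delta_K),\betainfty]$ with Lipschitz constant at most $1/\delta_K$. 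Since $\beta\circ u\in W^{1,1}(K)$ takes values in $[\beta(\delta_K),\betainfty]$, the standard chain rule for Lipschitz compositions gives $E'(u)=G(\beta(u))\in W^{1,1}(K)$ with
\[
\partial_x E'(u)=G'(\beta(u))\,\partial_x\beta(u)=\frac{\partial_x\beta(u)}{u}\quad \Leb 1\text{-a.e.\ on }K.
\]
Combined with the previous identity we obtain $\partial_x(E'(u)+V)=0$ $\Leb 1$-a.e.\ on $K$; since $E'(u)+V\in W^{1,1}(K)$, its absolutely continuous representative is constant. As $u:I\to(0,+\infty]$ is continuous and $E'$ extends continuously to $(0,+\infty]$, the function $E'(u)+V$ is already continuous on $I$, so it coincides on $K$ with the constant. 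Arbitrariness of $K\subset I$ and connectedness of $I$ then yield $E'(u)+V\equiv c_I$ on $I$.

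\textbf{Main obstacle.} The delicate point is handling the singular set $I\cap\CDom u=\{u=+\infty\}\cap I$ inside $I$: an argument restricted to the dense open subset $I\setminus\CDom u$ only delivers local constancy and leaves open whether the constants on different connected components meeting at singular points actually agree (the Cantor function shows that continuity together with local constancy on a dense open subset of full measure does not force global constancy). The escape is that $G$ extends Lipschitz-continuously up to $y=\betainfty$, thanks to $G'(\betainfty)=\lim_{r\to+\infty}1/r=0$; this lets the chain rule apply uniformly on every compact $K\subset I$ and produces a genuinely $W^{1,1}$ (hence absolutely continuous) primitive on all of $I$, not merely on its regular part.
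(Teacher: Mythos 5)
Your proof is correct and follows essentially the same route as the paper: both write $E'(u)=G(\beta(u))$ with $G=E'\circ\beta^{-1}$, observe via $\beta'(r)=rE''(r)$ that $G'(y)=1/\beta^{-1}(y)$ extends continuously by $0$ to $y=\betainfty$, and invoke a chain rule for $\beta\circ u\in W^{1,1}_{\rm loc}$ to get $\partial_x(E'\circ u)=\partial_x(\beta\circ u)/u$ on $\Pos u$, which combined with $\cI(\rho)=0$ yields $\partial_x(E'(u)+V)=0$. Your use of local Lipschitz bounds on compact $K\subset I$ in place of the paper's global $C^1(0,\betainfty]$ extension is a cosmetic rephrasing, not a genuinely different argument.
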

\begin{proof}
  Let us first show that
  the function
  $E'\circ u$ belongs to $W^{1,1}_{\rm loc}(\Pos u)$ with
  \begin{equation}
    \label{eq:100}
    \partial_x \big(E'\circ u\big)=\frac {\partial_x \big(\beta\circ
      u\big)}u\quad \text{in }\Pos u.
  \end{equation}
  We can simply write $E'\circ u=L\circ (\beta\circ u)$ where
  $L:=E'\circ \beta^{-1}$ and $\beta\circ u\in W^{1,1}_{\rm loc}(\R)$.
  The function $L$ belongs
  to $C^1(0,\betainfty)$ and can be extended to $\betainfty$ by continuity
  setting $L(\betainfty)=0$; it is easy to check that this
  extension belongs to $C^1(0,\betainfty]$, since
  \begin{displaymath}
    L'(r)=\frac{E''\circ \beta^{-1}}{\beta'\circ \beta^{-1}}=
    \frac{1}{\beta^{-1}},
    \quad
    \lim_{r\uparrow\betainfty}L'(r)=0.
  \end{displaymath}
  \eqref{eq:100} then follows by the chain rule for the composition of
  a $C^1$ with a Sobolev function.

  If $I$
  is a connected component of $\Pos u$, we have
  \begin{equation}
    \label{eq:70}
    0 = \frac{\partial_x\beta(u(x))}{u(x)}+V'(x) = \partial_x(E'(u(x))+V(x)) \quad\text{in }I,
  \end{equation}
  so that there exists a constant $c_I$ such that \eqref{eq:64} holds.
\end{proof}

\begin{proof}[Proof of Theorem \ref{thm:statI}]
  We have to prove only the ``right'' implication $\Rightarrow$.

  A simple argument by contradictions shows that $\Pos u=\R$:
  otherwise, if the interval
  $I=(a,b)$ is a connected component of $\Pos u$ and one of its
  extremes, say $a$, is finite,
  we should have
  \begin{displaymath}
    \lim_{x\downarrow a}u(x)=0,\quad
    -\dV=\lim_{x\downarrow a}E'(u(x))=c_I-V(a)>-\infty.
  \end{displaymath}
  Since $\Pos u=\R$ Lemma \ref{le:char} yields $V(x)\ge c_I$ for every
  $x\in \R$ and $u(x)=H(V(x)-c_I)$. Since $\rho\in
  \CPlusMeasures{\R,\mass}$
  we conclude that \eqref{eq:30} holds and $\rho$ is a minimizer of
  $\cF$
  by Theorem \ref{thm:main4}.
\end{proof}

\begin{proof}[Proof of Theorem \ref{thm:main_stationary}]
  Let $\rho=u\Leb 1+\rho^\perp\in \CPlusMeasures{\R,\mass}$ with
  $\cI(\rho)=0$ and let $I=(a,b)$ be
  a connected component of the open set $\Pos u$.
  Since the range of the function $r\mapsto -E'(r)$ for $r\in
  (0,+\infty]$ is the bounded interval $(0,\dV]$ and
  $\lim_{|x|\to\infty}V(x)=+\infty$
  we deduce from Lemma \ref{le:char} that $I$ is bounded.

  It follows that $u(a)=u(b)=0$ and therefore
  $\lim_{x\downarrow a}E'(u(x))=
  \lim_{x\uparrow b}E'(u(x))=-\dV$, $c_I=V(a)-\dV=V(b)-\dV$.
  We thus obtain \eqref{eq:98} and the representation \eqref{eq:103},
  which also yields \eqref{eq:99} since $u$ is integrable in $\R$.
  Since for every $x\in I$ $u(x)=+\infty$ iff $V(x)=V(a)-\dV$, i.e.
  $x\in Q_I$, we obtain \eqref{eq:84}.

  Conversely, if $\rho=u\Leb 1+\rho^\perp\in
  \CPlusMeasures{\R,\mass}$ satisfies the
  three conditions of Theorem \ref{thm:main_stationary},
  we immediately have that $\cI(\rho)=0$.
  In fact, the first integral of the definition of $\cI$ in
  \eqref{eq:5}
  vanishes by \eqref{eq:64} and \eqref{eq:100}; the second
  integral, corresponding to the singular part of $\rho$ vanishes
  since $\rho^\perp$ is concentrated on $Q(u)$ and
  $V'$ vanishes in each point of $Q_I$, which is a local minimizer of $V$.
\end{proof}

\begin{proof}[Proof of Corollary \ref{cor:obvious}]
  Remark \ref{rem:examples} shows that the minimizer of $\cF$ is
  unique.
  We have just to check the case when $\dV<+\infty$.
  By the assumption on the first derivative of $V$ is immediate to
  check that the set $\Pos u$ contains just one connected component $I=(a,b)$
  with  $a<q_-<q_+<b$. Theorem \ref{thm:main4}
  shows that $\rho$ is a minimizer of $\cF$.
\end{proof}

\begin{proof}[Proof of Theorem \ref{thm:main5}.]
We use the dissipation identity \eqref{eq:13} to obtain the
inequality
\begin{displaymath}
  {\int_{t_0}^{t_1}\cI(\urho_t)\,\d t=
  \cF(\urho_{t_0})-\cF(\urho_{t_1})\le \cF(\urho_{t_0})-\cF(\bar
  \rho)<+\infty\quad
  \text{for every }0<t_0<t_1<+\infty.}
\end{displaymath}

  Passing to the limit as $t_1\uparrow+\infty$ we get $\cI(\rho_t)\in
L^1(t_0,+\infty)$, so that
\begin{equation}
  \label{eq:106}
  \sum_{n=2}^{+\infty}\int_{n-1}^{n}\cI(\rho_t)\,\d t<+\infty.
\end{equation}
Since by \eqref{eq:19} $\cI(\rho_t)\ge \rme^{-2\lambda^-}\cI(\rho_n)$
if $t\in (n-1,n)$ we obtain $\sum_{n=2}^{+\infty}\cI(\rho_n)<+\infty$;
in particular
\begin{equation}
  \label{eq:108}
  \lim_{n\uparrow+\infty}\cI(\rho_n)=0\quad\text{and a further
    application of \eqref{eq:19} yields}\quad
  \lim_{t\uparrow+\infty}\cI(\rho_t)=0.
\end{equation}
Since
$\cF(\urho_{t})\leq \cF(\urho_{t_0})$ for every $t\geq t_0$, by \eqref{eq:95}
we infer  that {$\{\urho_{t}\}_{t\ge t_0}$ is tight;}
by Theorem \ref{thm:lsc-dissipation2}
any weak limit point $\rho_\infty$ of $\rho_t$ as $t\uparrow+\infty$
satisfies
$\cI(\urho_\infty)=0$ and therefore $\rho_\infty=\bar\rho$. It follows
that $\rho_t\weakto \bar\rho$ weakly as $t\uparrow+\infty$.

Theorem \ref{thm:lsc-dissipation2} yields the uniform convergence of
$u_t$ to $\bar u$
on compact sets of $\Dom{\bar u}$ as $t\to +\infty$.
When $\mass<\mass_\rmc$, $\bar\urho$ has a
bounded density and therefore for every compact subset $K\subset \R$
there exists a time $T>0$ such that $\urho_{t}$ is bounded on $K$
for every $t\ge T$. Choosing as $K:=\big\{x\in \R:V(x)\le c\big\}$
for a constant $c$ sufficiently big so that $K$ contains the support
of $\rho_0^\perp$, Theorem \ref{thm:main3}
shows that the support of $\rho_t^\perp$ is contained in $K$
for every $t>0$ and therefore $\rho_t^\perp=0$ for $t\ge T$.
\end{proof}

\bibliographystyle{siam}
\def\cprime{$'$}
\bibliography{bibliografiaFLST}

\end{document}